\documentclass[11pt]{article}
\usepackage{graphicx}

\usepackage[utf8]{inputenc}
\usepackage{amsmath,amsfonts,amssymb,amscd,amsthm,txfonts,hyperref}

\oddsidemargin 1 cm \evensidemargin 1 cm \textwidth 15 cm \topmargin
0 cm \textheight 22.5 cm

\newtheorem{theorem}{Theorem}[section]
\newtheorem{proposition}[theorem]{Proposition}
\newtheorem{lemma}[theorem]{Lemma}
\newtheorem{corollary}[theorem]{Corollary}
\newtheorem{remark}{Remark}[section]

\theoremstyle{definition}
\newtheorem{definition}[theorem]{Definition}

\newcommand{\R}{\varmathbb R}

\newcommand{\N}{\varmathbb N}

\newcommand{\E}{\mathcal E}

\newcommand{\arct}{\mbox{Arctan }}

\newcommand{\T}{\varmathbb T}

\newcommand{\pt}{\mbox{PT}}
\newcommand{\lap}{\bigtriangleup}

\title{Consequences of the choice of a particular basis of $L^2(S^3)$ for the cubic wave equation on the sphere and the Euclidian space}
\author{Anne-Sophie de Suzzoni\footnote{University of Cergy-Pontoise, UMR CNRS 8088, F-95000 Cergy-Pontoise, e-mail : \texttt{anne-sophie.de-suzzoni@u-cergy.fr}}}

\begin{document}

\maketitle

\begin{abstract}In this paper, the almost sure global well-posedness of the cubic non linear wave equation on the sphere is studied when the initial datum is a random variable with values in low regularity spaces. The domain is first the 3D sphere, thanks to the existence of a uniformly bounded in $L^p$ basis of $L^2(S^3)$ and then the result is extended to $\R^3$ thanks to the Penrose transform.\end{abstract}

\tableofcontents

\section{Introduction}
 
The first aim of this paper is to extend the result by N. Burq and N. Tzvetkov \cite{twon} on the torus to the sphere of dimension 3. In \cite{twon}, N. Burq and N. Tzvetkov have proved the global well-posedness of the cubic non linear equation when the initial datum is a randomization of some function in the product of Sobolev spaces $H^\sigma(\T^3)\times H^{\sigma - 1}(\T^3)$, $\sigma \geq 0$ and $\T^3$ the torus of dimension 3. 

The probabilistic estimates they use in order to prove their result are due to the fact that the $L^p$ norms of the canonical basis of $L^2(\T^3)$ : $(e^{in.x})_{n\in \N^3}$, are uniformly bounded, whether in $n$ or in $p$.

Here, a result of N. Burq of G. Lebeau is required to go on in the case of the sphere, as a basis of $L^2(S^3)$ has a priori no reason to be uniformly bounded in $L^p$. In \cite{BLinj}, they proved that there exists a basis of $L^2$ uniformly bounded in $L^p$ by $C_p$ and formed by eigenfunctions of the Laplace-Beltrami operator on the sphere. Here, the result of \cite{twon} is extended to the sphere, despite the dependence of the bound of the norms of the basis on $p$.

Let us describe the above-mentioned randomization. Let $(e_{n,k})_{n,k}$ be such a basis of $L^2(S^3)$, that is, such that for all $n,k$

$$||e_{n,k}||_{L^p} \leq C_p\; ,$$
and 

$$-\lap_{S^3}e_{n,k}= n^2 e_{n,k}\; ,$$
let $a_{n,k}$ and $b_{n,k}$ be two sequences of real-valued i.i.d. on a probability space $(\Omega, \mathcal A , P)$ and with large Gaussian deviation estimates, which results from the assumption that there exists $c$ such that for all $\gamma$, the following mean values satisfy :

$$E(e^{\gamma a_{n,k}}),E(e^{\gamma b_{n,k}}) \leq e^{c\gamma^2 }\; ,$$
and finally, let $\lambda_{n,k}$ and $\mu_{n,k}$ be two sequences of complex numbers such that for some $\sigma \geq 0$ :

$$\sum_{n,k}(1+n^2)^\sigma |\lambda_{n,k}|^2 < \infty \mbox{ and } \sum_{n,k}(1+n^2)^{\sigma-1} |\mu_{n,k}|^2 < \infty \; .$$

Then the equation 

$$(\partial_T^2 +1-\lap_{S^3})u + u^3 = 0$$
with initial datum the randomization of $\sum \lambda_{n,k} e_{n,k},\sum \mu_{n,k}e_{n,k}$ defined as

$$u_{|T=0}=u_0 = \sum_{n,k}\lambda_{n,k} a_{n,k}e_{n,k} \; , \; \partial_T u_{|T=0} = u_1 = \sum_{n,k} \mu_{n,k} b_{n,k} e_{n,k} \; ,$$
is globally well-posed.

The measure $\mu$ induced by the couple $(u_0,u_1) \in L^2(\Omega, H^\sigma \times H^{\sigma -1})$ where $\sigma $ is given in the assumptions on $(\lambda_n)_n,(\mu_n)_n$ is very similar to the one introduced by the randomization in \cite{BTranI}.

To phrase it more precisely,

\begin{theorem}\label{fst} There exists a set $E$ of full $\mu$-measure such that for all $(v_0,v_1) \in E$ the Cauchy problem

$$\left \lbrace{\begin{tabular}{ll}
$(\partial_T^2 +1-\lap_{S^3})u +  u^3 = 0$ & \\
$u_{|T=0}=v_0 $ & $\partial_T u_{|T= 0} = v_1$ \end{tabular}} \right. $$
has a unique solution in $U(T)(v_0,v_1) + \mathcal C(\R ,H^1(S^3))$ where $U(T)$ is the flow of the linear equation $\partial_T^2 +1-\lap_{S^3} = 0$.\end{theorem}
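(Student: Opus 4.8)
\medskip
\noindent\emph{Strategy of the proof.} The plan is to follow the Burq--Tzvetkov decomposition: write the sought solution as $u = U(T)(v_0,v_1) + w$, where the linear flow $f := U(T)(v_0,v_1)$ absorbs the rough, randomized part of the data and $w$ is the nonlinear remainder, which we look for in $\mathcal C(\R, H^1(S^3))$. Substituting, $w$ must solve the forced cubic wave equation
\begin{equation*}
(\partial_T^2 + 1 - \lap_{S^3}) w + (w+f)^3 = 0, \qquad w_{|T=0} = 0, \quad \partial_T w_{|T=0} = 0,
\end{equation*}
so that the whole problem reduces to solving this equation for $w$, with $f$ a given (random) source term about which we only know probabilistic space--time integrability.

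First I would establish the probabilistic regularity of $f$. Since $(e_{n,k})_{n,k}$ is bounded in every $L^q(S^3)$ by $C_q$, and $(a_{n,k}),(b_{n,k})$ obey the sub-Gaussian bound $E(e^{\gamma a_{n,k}}),E(e^{\gamma b_{n,k}}) \leq e^{c\gamma^2}$, a Khintchine/large-deviation argument as in \cite{BTranI,twon} shows that, off a set of arbitrarily small $\mu$-measure, $f$ belongs to $L^p([0,T_0]; L^q(S^3))$ for a suitable range of exponents --- in particular to $L^3([0,T_0];L^6(S^3))$ and to $L^\infty([0,T_0];L^q(S^3))$ for every finite $q$, on every time interval $[0,T_0]$ --- with bounds growing only polynomially. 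The convergence of the defining series is guaranteed by $\sum (1+n^2)^\sigma|\lambda_{n,k}|^2<\infty$ and $\sum (1+n^2)^{\sigma-1}|\mu_{n,k}|^2<\infty$. The one subtlety here is that $C_q$ grows with $q$; but the Gaussian tail is strong enough to absorb this, so intersecting over a countable family of exponents and time horizons still produces a single set $E$ of full $\mu$-measure on which $f$ enjoys all the integrability needed below.

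Next comes the local theory for $w$ in $\mathcal C([0,T_0],H^1(S^3))$. I would run a contraction on the Duhamel formulation
\begin{equation*}
w(T) = -\int_0^T \frac{\sin\!\big((T-s)\sqrt{1-\lap_{S^3}}\big)}{\sqrt{1-\lap_{S^3}}}\,\big(w(s)+f(s)\big)^3\,ds,
\end{equation*}
expanding $(w+f)^3 = w^3 + 3w^2 f + 3wf^2 + f^3$. The purely-$w$ term is handled by the energy estimate together with the Sobolev embedding $H^1(S^3)\hookrightarrow L^6(S^3)$, and each term containing $f$ is estimated by Hölder in time and space against the probabilistic norms of $f$ from the previous step. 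This yields a unique solution on an interval whose length depends only on the (a.s. finite, when $(v_0,v_1)\in E$) size of $f$.

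The core of the argument, and the step I expect to be the main obstacle, is the globalization. Here I would use the energy method: set
\begin{equation*}
\mathcal E(w)(T) = \tfrac12\|\partial_T w\|_{L^2}^2 + \tfrac12\|w\|_{L^2}^2 + \tfrac12\|\nabla w\|_{L^2}^2 + \tfrac14\|w\|_{L^4}^4,
\end{equation*}
and differentiate it in $T$. Because $w$ does not solve the homogeneous equation, $\tfrac{d}{dT}\mathcal E(w)$ is not zero but a sum of terms of the form $\int_{S^3}\partial_T w \cdot (\text{products of }w\text{ and }f)$; by Hölder, Sobolev and Young these are bounded by $\Phi(T)\,(1+\mathcal E(w)(T))$ with a coefficient $\Phi$ built from $L^q_x$ norms of $f$ that is integrable in time on $E$. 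Gronwall then gives an a priori bound on $\mathcal E(w)$, hence on $(w,\partial_T w)$ in $H^1\times L^2$, on any finite interval, which lets us iterate the local theory up to any time and extend $w$ to all of $\R$; uniqueness in $U(T)(v_0,v_1)+\mathcal C(\R,H^1(S^3))$ follows from uniqueness in the local Cauchy problem for $w$. The delicate point throughout is bookkeeping on the cubic nonlinearity: each factor of $f$ must be paired with a low enough power of $w$ (or $\partial_T w$) that the $H^1$ energy controls it, while the remaining factors of $f$ are controlled by an $L^p_T L^q_x$ norm that is finite on $E$. Verifying that the probabilistic integrability of $f$ obtained in the first step is \emph{exactly} what is needed to close these estimates is where the proof is tightest and where most of the care is required.
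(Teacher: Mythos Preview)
Your overall architecture --- the decomposition $u=U(T)(v_0,v_1)+w$, the probabilistic $L^p_TL^q_x$ bounds on $f=U(T)(v_0,v_1)$, the $H^1$ local theory by contraction, and the energy method for globalization --- matches the paper. For $\sigma>0$ your sketch is essentially complete: in that regime one can show $(1+T^2)^{-1}f\in L^1_TL^\infty_x$ almost surely (via $W^{\sigma,p}\hookrightarrow L^\infty$ for $p$ large), and then the cross term $\int\partial_T w\cdot f\,w^2$ is bounded by $\|f\|_{L^\infty_x}\|\partial_T w\|_{L^2}\|w\|_{L^4}^2\lesssim\|f\|_{L^\infty_x}\mathcal E(w)$, so the energy inequality is linear and Gronwall closes exactly as you say.

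The gap is at $\sigma=0$. There $f$ is almost surely \emph{not} in $L^\infty_x$, and your claim that $\tfrac{d}{dT}\mathcal E(w)\le\Phi(T)(1+\mathcal E(w))$ fails precisely on the term $\int\partial_T w\cdot f\,w^2$. If you place $f$ in $L^p_x$ with $p<\infty$, H\"older forces $w^2$ into $L^{p'}$ with $2p'>4$; interpolating $\|w\|_{L^{2p'}}$ between $L^4$ (controlled by $\mathcal E^{1/4}$) and $L^6\hookleftarrow H^1$ (controlled by $\mathcal E^{1/2}$) gives
\[
\Big|\int\partial_T w\cdot f\,w^2\Big|\ \lesssim\ \|f\|_{L^p_x}\,\mathcal E(w)^{\,1+\theta/2},\qquad \theta=\tfrac{6}{p}>0,
\]
which is genuinely superlinear; a naive Gronwall could blow up in finite time. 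The paper resolves this with two additional ideas you are missing. First, a frequency splitting $f=\Pi_N f+(1-\Pi_N)f$: the low-frequency piece \emph{is} in $L^1_TL^\infty_x$ (with a bound depending on $N$), while the high-frequency piece has $L^p_{T,x}$ norm controlled by the tail $\sqrt{S^N}$, which can be made as small as one likes by choosing $N=N(T_0)$ large. Second, instead of Gronwall, a bootstrap/continuity argument: assume $\mathcal E\le e^{p/6}$ on $[0,T_1]$, feed this into the superlinear term to make it effectively linear, and show the resulting bound is strictly below $e^{p/6}$; this keeps $\mathcal E$ bounded on $[-T_0,T_0]$. The full-measure set $E$ is then obtained as a union over $p\to\infty$ (equivalently $\theta\to0$), each piece having complementary measure $\le Ce^{-cp}$. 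Without these two steps, the globalization at $\sigma=0$ does not close.
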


Note that the wave operator $\partial_T^2 -\lap_{S^3} $ has been replaced here by $\partial_T^2 +1-\lap_{S^3} $ for convenience with respect to the second part of this paper, dedicated to the cubic non linear wave equation on $\R^3$. However, the proof for the cubic non linear wave equation on the sphere would be similar to the one with this operator.

What is more, if the $\lambda_{n,k}$ (resp. or the $\mu_{n,k}$) are supposed to be such that

$$\sum_{n,k} (1+n^2)^s |\lambda_{n,k}|^2 = + \infty \mbox{ (resp. or } \sum_{n,k} (1+n^2)^{s-1} |\mu_{n,k}|^2 = +\infty \mbox{)}$$
for all $s>\sigma$, and the $a_{n,k}$ and $b_{n,k}$ are complex Gaussian of law $\mathcal N(0,1)$, then the elements of $E$ are almost surely in $H^\sigma \times H^{\sigma -1}$ and almost surely not in $H^s\times H^{s-1}$.

As it appears, this result recalls one of \cite{remoi} on the sphere but without the hypothesis of radial symmetry.

The main idea behind the proof is that with large Gaussian deviation estimates, the solution of the linear equation $U(T)(u_0,u_1)$ is made to belong almost surely to $L^p$ for all $p\geq 1$ which ensures local and then global well-posedness. Indeed, it is the gain on integrability on the initial data that helps to gain regularity on the non linear part (namely, the solution minus $U(T)(u_0,u_1)$) of the solution.

A second issue raised on this paper is the properties of the Penrose transform of the solution. The Penrose transform sends solutions of $(\partial_T^2 +1-\lap_{S^3})u + u^3 = 0$ on the sphere to solution of the cubic non linear wave equation on the Euclidean space $\R^3$. Indeed, the change of variable involved in this transform injects $\R\times \R^3$ into $[-\pi,\pi]\times S^3$ and satisfies nice properties with respect to the d'Alembertian $\partial_t^2 -\lap_{\R^3} $. 

Hence, with a solution of $(\partial_T^2 +1-\lap_{S^3})u + u^3 = 0$ on the sphere, the existence of a solution of the cubic NLW on $\R^3$ is expected.

Nevertheless, the use of the Penrose transform raises three problems : first, the space where the solution lives shall be described, then, so does the space where this solution is unique, and finally, the spaces to which the initial data belong or do not belong should be specified.

Unfortunately, the third matter remains unanswered but the author believes that if the work on the sphere is done with $\sigma = 0$, that is with the initial data on the sphere in $L^2\times H^{-1}$, then the initial data on $\R^3$ should be almost surely in $L^2\times H^{-1}$ when multiplied by $(\left(\frac{2}{1+r^2}\right)^{1/2},(\left(\frac{2}{1+r^2}\right)^{-1/2})$, and almost surely not to $H^s\times H^{s-1}$, when $s> 0$.

Nevertheless, the following theorem holds.

\begin{theorem}\label{snd} There exists a measure $\nu$ on $\mathcal L^2(\R^3)\times \mathcal H^{-1}(\R^3)$ with

$$||g||_{\mathcal L^2} = ||\left(\frac{2}{1+r^2}\right)^{1/2}g||_{L^2} \; , \; ||g||_{\mathcal H^{-1}(\R^3)} = ||\left(\frac{2}{1+r^2}\right)^{-1/2}(1-H_1)^{-1} g ||_{L^2}$$
and

$$H_1 = \left( \frac{1+r^2}{2}\right)^2 \lap_{\R^3} + 3\frac{1+r^2}{2} r \partial_r  + 6\frac{1+r^2}{2}$$
and a set $F$ of full $\nu$-measure such that for all $(g_0,g_1) \in F$, the Cauchy problem : 

$$\left \lbrace{ \begin{tabular}{ll}
$(\partial_t^2 -\lap_{\R^3})f + f^3 =0 $& \\
$f_{|t=0} = g_0$ & $\partial_t f_{|t=0} =g_1$\end{tabular}} \right.$$
has a unique global solution in $L(t)(g_0,g_1) + \mathcal C (\R,H^1(\R^3))$ where $L(t)$ is the flow of the linear wave equation.

Moreover, the solution $f$ satisfies scattering in the sense that for all $q\in ]\frac{18}{5},6]$,

$$||f(t)-L(t)(g_0,g_1)||_{L^q} = O((1+t^2)^{-1/6})$$
when $t\rightarrow \pm \infty$.\end{theorem}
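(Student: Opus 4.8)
\emph{Approach.} The plan is to deduce Theorem~\ref{snd} from Theorem~\ref{fst} by transporting everything through the Penrose transform. Write $(t,r,\omega)$ for polar coordinates on $\R\times\R^3$ and $(T,\psi,\omega)$ for those on $\R\times S^3$, and let $\Phi$ be the conformal embedding $t\pm r=\tan\frac{T\pm\psi}{2}$. It maps $\R\times\R^3$ onto the ``diamond'' $D=\{|T|+\psi<\pi\}$ and carries the flat metric to $\Omega^2$ times the Einstein metric $-dT^2+d\psi^2+\sin^2\psi\,d\omega^2$, where $\Omega(t,x)=2\bigl((1+(t-r)^2)(1+(t+r)^2)\bigr)^{-1/2}$. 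Two facts make the transfer work: $\partial_T^2+1-\lap_{S^3}$ is exactly the conformal (Yamabe) wave operator of the Einstein cylinder, the constant $1$ being its scalar-curvature term; and in three space dimensions the cubic power is the conformally critical one. Hence $u\mapsto f:=\Omega\,(u\circ\Phi)$ sends solutions of $(\partial_T^2+1-\lap_{S^3})u+u^3=0$ on $\R\times S^3$ to solutions of $(\partial_t^2-\lap_{\R^3})f+f^3=0$ on $\R\times\R^3$, and likewise it sends the linear flow $U(T)$ to the linear wave flow $L(t)$. Note that $\Phi$ identifies $\{t=0\}$ with $\{T=0\}$, so the whole construction uses the $S^3$-solution only on the bounded interval $|T|<\pi$.

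\emph{The measure, the set $F$, existence and uniqueness.} Let $\mathcal P$ send $(v_0,v_1)$ to the Cauchy data at $t=0$ of $\Omega\,(u\circ\Phi)$ when $u$ has Cauchy data $(v_0,v_1)$ at $T=0$. Using $\partial_tT|_{t=0}=\tfrac{2}{1+r^2}$, $\partial_t\psi|_{t=0}=0$ and $\partial_t\Omega|_{t=0}=0$ one gets $g_0=\tfrac{2}{1+r^2}v_0$ and $g_1=\bigl(\tfrac{2}{1+r^2}\bigr)^2v_1$ (composed with $\psi=2\arctan r$), and a direct computation shows that the weights $\bigl(\tfrac{2}{1+r^2}\bigr)^{\pm1/2}$ and the operator $H_1$ — which is precisely the conjugate of $1-\lap_{S^3}$ through $\Phi$ and these weights — are the factors needed for $\mathcal P$ to be a bicontinuous isomorphism of $L^2(S^3)\times H^{-1}(S^3)$ onto $\mathcal L^2(\R^3)\times\mathcal H^{-1}(\R^3)$; one runs Theorem~\ref{fst} with $\sigma=0$. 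Set $\nu:=\mathcal P_*\mu$ and $F:=\mathcal P(E)$, so $F$ has full $\nu$-measure. Given $(g_0,g_1)\in F$, let $(v_0,v_1)=\mathcal P^{-1}(g_0,g_1)\in E$, let $u=U(T)(v_0,v_1)+w$ with $w\in\mathcal C(\R,H^1(S^3))$ be the solution from Theorem~\ref{fst}, and set $f:=\Omega\,(u\circ\Phi)$; then $f$ solves the cubic NLW on $\R\times\R^3$ with data $(g_0,g_1)$ and $f=L(t)(g_0,g_1)+h$, $h:=\Omega\,(w\circ\Phi)$. That $h\in\mathcal C(\R,H^1(\R^3))$ follows since the first-order energy is conformally invariant in $3+1$ dimensions, so $\Omega\,(\,\cdot\,\circ\Phi)$ maps $\mathcal C(\R_T,H^1(S^3))$ into $\mathcal C(\R_t,H^1(\R^3))$, continuity in $t$ reducing to continuity of $w$ in $T$ and smoothness of $\Phi,\Omega$ on the relevant part of $\overline D$. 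For uniqueness, a solution in $L(t)(g_0,g_1)+\mathcal C(\R,H^1(\R^3))$ pulls back through $\Phi^{-1}$ to a function on $D$ of the form $U(T)(v_0,v_1)+\mathcal C(\,\cdot\,,H^1(S^3))$ solving the Einstein-cylinder equation there; since $D$ contains the Cauchy slice $\{T=0\}$, finite speed of propagation localises the comparison to $D$ and the uniqueness clause of Theorem~\ref{fst} applies. Globality in $t$ is automatic because $D$ meets only $|T|<\pi$.

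\emph{Scattering.} By the above, $f(t)-L(t)(g_0,g_1)=\Omega(t,\cdot)\,(w\circ\Phi)(t,\cdot)$, and the decay is driven by $\Omega\to0$ at null and timelike infinity. On $\{t\}\times\R^3$ the minimum over $r\ge0$ of $(1+(t-r)^2)(1+(t+r)^2)$ is attained near $r=|t|$ and equals $1+4t^2$, so $\|\Omega(t,\cdot)\|_{L^\infty(\R^3)}\lesssim\langle t\rangle^{-1}$ (with $\langle t\rangle=(1+t^2)^{1/2}$), and more generally $\|\Omega(t,\cdot)\|_{L^p(\R^3)}\lesssim\langle t\rangle^{2/p-1}$ for $p>2$. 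On the other side, $w$ stays bounded in $H^1(S^3)$ over the compact $T$-range seen by the slices $\{t\}\times\R^3$, and a spacelike energy/trace estimate keeps its restriction to each such slice bounded in $H^1(S^3)\hookrightarrow L^6(S^3)$. After the change of variables by $\Phi$ (under which $dx=\Omega^{-3}\,d\sigma_t$ on the slice, with $d\sigma_t$ dominated by the $S^3$-volume), $\|f(t)-L(t)(g_0,g_1)\|_{L^q(\R^3)}^q$ becomes an integral of $\Omega^{q-3}|w|^q$ over the image slice; Hölder against the $L^6(S^3)$-bound on $w$ together with the $L^\infty$-decay of $\Omega$ gives $\|f(t)-L(t)(g_0,g_1)\|_{L^6(\R^3)}\lesssim\langle t\rangle^{-1/2}$, and interpolating this against the control of $\|f(t)-L(t)(g_0,g_1)\|_{L^2(\R^3)}$ inherited from $h\in\mathcal C(\R,H^1(\R^3))$ yields $\|f(t)-L(t)(g_0,g_1)\|_{L^q(\R^3)}=O(\langle t\rangle^{-1/3})=O((1+t^2)^{-1/6})$ for $q$ in the stated range $]\tfrac{18}{5},6]$.

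\emph{Main obstacle.} The crux is this last step together with the operator/function-space bookkeeping behind the first one: one must understand the conformal embedding near null infinity precisely enough — the decay of $\Omega$ and of the induced volume on the Euclidean slices $\{t=\mathrm{const}\}$ inside the Einstein cylinder, and the restriction of $w$ to those slices — both to identify $\mathcal L^2,\mathcal H^{-1}$ with the pushforwards of $L^2(S^3),H^{-1}(S^3)$ (in particular to verify that $H_1$ is the right conjugate of $1-\lap_{S^3}$) and to pin down the sharp decay range $q>\tfrac{18}{5}$. Everything else is transport of Theorem~\ref{fst} through a classical conformal correspondence.
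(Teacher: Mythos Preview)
Your overall route---transport Theorem~\ref{fst} through the Penrose transform, define $\nu=\mathcal P_*\mu$ and $F=\mathcal P(E)$---is exactly the paper's, and the identification of the data map and of $H_1$ is correct. But the two steps you treat as routine are precisely where the paper's work lies, and your arguments for them do not go through.

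\textbf{Regularity of $h$ and uniqueness.} The claim that ``the first-order energy is conformally invariant, so $\Omega(\,\cdot\,\circ\Phi)$ maps $\mathcal C(\R_T,H^1(S^3))$ into $\mathcal C(\R_t,H^1(\R^3))$'' is not correct as stated. The $t$-slices of $\R\times\R^3$ land on \emph{curved} spacelike hypersurfaces of the Einstein cylinder, not on $T$-slices; when you change variables on a fixed-$t$ slice you pick up $dx\sim\Omega^{-3}d\mu_{S^3}$, so $\|h(t)\|_{L^2(\R^3)}^2$ contains a factor $\Omega^{-1}$ which blows up towards the boundary of the diamond, and a generic $w(T)\in H^1(S^3)$ does \emph{not} produce $h(t)\in L^2(\R^3)$. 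In addition $w$ satisfies $(\partial_T^2+1-\lap_{S^3})w=-(U(T)(v_0,v_1)+w)^3$, so no energy conservation across slices is available. The paper sidesteps all of this: it proves the \emph{space-time} bound $\|\pt^{-1}w\|_{L^q(\R\times\R^3)}\le C\|w\|_{L^q([-\pi,\pi]\times S^3)}$ for $q\ge4$ (because the Jacobian leaves $\Omega^{q-4}\le C$), deduces that both $L(t)(g_0,g_1)$ and $h$ lie in $L^6(\R\times\R^3)$, and then obtains $h\in\mathcal C(\R,H^1(\R^3))$ directly from the $\R^3$-Duhamel formula via $\|h(t)\|_{\dot H^1}\le\int_0^t\|f(s)\|_{L^6}^3\,ds\le\sqrt{|t|}\,\|f\|_{L^6(\R\times\R^3)}^3$. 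Uniqueness is done by a direct Gronwall argument on $\R^3$ using $f\in L^6_{t,x}$; your pullback route would require the same unjustified slice transfer in the reverse direction.

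\textbf{Scattering.} Your interpolation breaks for two reasons. First, bounding $\int_{\mathrm{slice}}|w|^6$ requires exactly the curved-slice trace estimate you only gesture at; the $L^\infty_T L^6_x$ control of $w$ does not give it, since $T$ varies along the slice. Second, you interpolate against a uniform-in-$t$ bound on $\|h(t)\|_{L^2}$ ``inherited from $h\in\mathcal C(\R,H^1)$'', but continuity gives no such bound, and the Duhamel argument above yields $\|h(t)\|_{H^1}\lesssim\sqrt{|t|}$; interpolating $\langle t\rangle^{-1/2}$ at $L^6$ against $\langle t\rangle^{1/2}$ at $L^2$ gives only $\langle t\rangle^{-1/6}$ at $q=18/5$, not $\langle t\rangle^{-1/3}$. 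The paper's mechanism is different: it writes $f(t)-L(t)(g_0,g_1)=-\Omega\!\int_0^{T(t,r)}\!\tfrac{\sin((T-\tau)\sqrt{1-\lap})}{\sqrt{1-\lap}}u^3(\tau)\,d\tau$, splits by H\"older with $\tfrac1q=\tfrac1p+\tfrac16$, and changes variable on the slice. The point is that after dropping the indicator $1_{\tau\le T(t,R)}$ the Duhamel integrand no longer depends on the slice, so the $L^6$ factor becomes an honest $L^6(S^3)$ norm bounded by $\|u\|_{L^3_TL^6}^3$; the remaining factor is $(1+t^2)^{-1/6}\|\Omega\|_{L^{p/3}}^{1/3}$, finite uniformly in $t$ precisely when $p/3>3$, i.e.\ $q>18/5$. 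That is where the threshold comes from, and $(1+t^2)^{-1/6}$ is the exact rate for every $q$ in the range, not an interpolation artifact.
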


The measure $\nu$ is the image measure of the measure $\mu$ induced by $(u_0,u_1)$ by the Penrose transform.

The main idea of the proof is that for $q\geq 4$, the $L^q$ norm of the solution on $\R^3$ is controlled by the $L^q$ norm of the solution on the sphere, which ensures regularity properties and then uniqueness of the solution. 

\paragraph{Plan of the paper} The section 2 is dedicated to the proof of the global well-posedness of the equation on the sphere. The first two subsections, which are about local theory and global theory on $H^\sigma\times H^{\sigma-1} $ with $\sigma > 0$, are very similar to \cite{twon}. The third one, where the global well-posedness is dealt with the initial datum being almost surely in $L^2\times H^{-1}$ presents divergences, in particular due to the fact that the bound on the $L^p$ norms of the chosen basis of $L^2$ depends on $p$.

The third section is about the Penrose transform and how it acts on the norms of the solution and the norms of the initial data. The transform is presented, along with its trace on the initial data, and then what its trace turns the Laplace-Beltrami operator on the sphere to in order to study the Sobolev norms of the initial data on $\R^3$.

The fourth one is about the uniqueness and scattering properties of the solution of the equation on $\R^3$. It focuses on the integrability of the solution (how a $L^p$ norm of the solution on the sphere is changed by the Penrose transform), then its regularity before stating the uniqueness and scattering results.

\section*{Acknowledgements}

The author would like to thank Nicolas Burq for suggesting the problem.

\section{Almost sure existence of global solutions on the sphere}

This section deals with the global well-posedness for the cubic wave equation on the sphere with initial data taken as a random variable on $H^\sigma \times H^{\sigma-1}$, $\sigma\geq 0$. The solution has a linear part in $\mathcal C(\R, H^\sigma) $ and a non linear part in $\mathcal C (\R , H^1)$.

\subsection{Definition of the initial data and local theory}
Following the techniques of N. Burq and N. Tzvetkov in \cite{twon}, the random initial datum shall be chosen such that when it is submitted to the linear flow of the wave equation, it has $L^p$ norms in time and space.

Theorem 6 of the third section of \cite{BLinj} provides the existence of a Hilbertian basis of $L^2(S^3)$ composed with spherical harmonics uniformly bounded in $L^p$. Let us therefore name the different objects that shall be needed to define a suitable initial datum.

First, let us recall that the eigenvalues of $1-\lap_{S^3}$ are $n^2, n\geq 1$.

Thanks to the result of N. Burq and G. Lebeau, denote by $(e_{n,k})_{n\geq 1, 1\leq k\leq (n+1)^2}$ a Hilbertian basis of $L^2(S^3)$ uniformly bounded in $L^p$.

\begin{theorem}[Burq, Lebeau] There exists a Hilbertian basis $(e_{n,k})_{n,k}$ of $L^2(S^3)$ such that : 

$$(1-\lap_{S^3}) e_{n,k}  = n^2 e_{n,k}$$
for all $n\geq 1, 1\leq k\leq n^2$, $n^2$ being the dimension of the subspace of $L^2$ spanned by the spherical harmonics of degree $n-1$, and such that there exists a constant $C_p$  such that for all $n,k$

$$||e_{n,k}||_{L^{p}(S^3)} \leq C_p \; .$$
\end{theorem}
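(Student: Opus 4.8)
The statement to be proved is the Burq–Lebeau theorem: the existence of a Hilbertian basis of $L^2(S^3)$ consisting of eigenfunctions of $1-\lap_{S^3}$ that is uniformly bounded in every $L^p(S^3)$. Since this is quoted from \cite{BLinj}, the plan is to reproduce the essential mechanism of their argument rather than to reinvent it. The key structural fact is that the eigenspace $\mathcal H_n$ of $-\lap_{S^3}$ associated with the eigenvalue $n(n+2)$ (equivalently the eigenvalue $(n+1)^2$ of $1-\lap_{S^3}$, after reindexing) is finite-dimensional, of dimension $d_n = (n+1)^2$, and — crucially — that the space of spherical harmonics of each degree carries a transitive action of the isometry group $SO(4)$ of $S^3$. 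Inside each fixed eigenspace $\mathcal H_n$ one therefore has a lot of freedom to rotate an orthonormal basis, and the problem reduces to choosing, for each $n$ separately, an orthonormal basis of the finite-dimensional space $\mathcal H_n$ whose elements all have $L^p$ norm bounded by a constant depending only on $p$ (not on $n$). Concatenating these bases over $n\geq 1$ (with the stated reindexing $1\le k\le (n+1)^2$, written $n^2$ after the shift) yields the desired Hilbertian basis of $L^2(S^3)=\bigoplus_n \mathcal H_n$.

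The heart of the matter is thus a finite-dimensional, per-eigenspace statement. The plan is to invoke the $L^2\!\to\!L^p$ spectral projector bounds of Sogge: the projector $\chi_n$ onto $\mathcal H_n$ satisfies $\|\chi_n\|_{L^2\to L^p(S^3)}\lesssim n^{\rho(p)}$ with the appropriate exponent $\rho(p)$ (for $p=\infty$ this is $n^{1/2}$ in dimension $3$, and in general $\rho(p)$ is the Sogge exponent). A single unit-norm eigenfunction can saturate this bound — the zonal harmonic does — so one cannot hope to bound every orthonormal basis. Instead, one uses an averaging / randomization argument: equip the (compact) group of orthonormal bases of $\mathcal H_n$, or equivalently the unit sphere of $\mathcal H_n$ together with a Haar-type probability measure, with the natural measure, and show that the expected value of $\sum_k \|f_k\|_{L^p}^{p}$ over a Haar-random orthonormal frame $(f_k)$ is controlled; combined with the dimension count $d_n\sim n^2$ and the $L^2$ orthonormality, a pigeonhole/averaging step produces at least one orthonormal basis of $\mathcal H_n$ all of whose elements obey $\|f_k\|_{L^p}\le C_p$. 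Concretely, one computes $\mathbb E\big[\,|f(x)|^p\,\big]$ for $f$ uniform on the unit sphere of $\mathcal H_n$ using that $|f(x)|^2$ has expectation $d_n/\mathrm{vol}(S^3)$ (from $\sum_k|e_k(x)|^2 = d_n/\mathrm{vol}(S^3)$ by rotation invariance of the reproducing kernel on the diagonal) and that Gaussian-type concentration upgrades the second moment to all moments up to a $p$-dependent constant; dividing by the normalization and taking $p$-th roots gives the uniform bound. The passage to genuine orthonormal bases (rather than single vectors) is handled by iterating this selection while projecting onto orthogonal complements, the point being that removing one or a few directions from $\mathcal H_n$ changes the relevant averages only by lower-order amounts since $d_n\to\infty$.

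I expect the main obstacle to be exactly this last point: controlling the $L^p$ norms of \emph{all} members of a single orthonormal basis \emph{simultaneously}, rather than of one well-chosen function, while keeping the constant independent of $n$. The first-moment/averaging argument easily yields one good vector, and with more care a basis of a subspace of $\mathcal H_n$ of dimension $(1-\epsilon)d_n$; extracting a \emph{full} orthonormal basis of each $\mathcal H_n$ with a clean uniform constant requires either an iterated extraction with quantitative control on how the averaged quantities degrade as the ambient dimension shrinks, or a more clever group-theoretic construction exploiting a chain of subgroups of $SO(4)$. This is the technical core carried out in \cite{BLinj}, and a full proof would reproduce their construction; here it suffices to record the statement and refer to that paper, as the remainder of this section only uses the two displayed conclusions — the eigenvalue equation and the bound $\|e_{n,k}\|_{L^p}\le C_p$.
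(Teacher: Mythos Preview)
Your outline follows the same probabilistic route as the paper's appendix: put a rotation-invariant probability on the unit sphere of each eigenspace $E_n$, use L\'evy-type concentration for Lipschitz functions on $S^{N_n-1}$, and compute the $p$-th moment of $\|u\|_{L^p}$ to see that the median (or mean) is bounded by $C\sqrt{p}$ uniformly in $n$. The substantive difference is in how you plan to pass from ``one good vector'' to ``a full orthonormal basis.'' You flag this as the technical core and propose an iterated extraction---pick a good vector, project it out, repeat---while tracking how the constants degrade as the ambient dimension drops. The paper avoids this entirely: it places Haar measure $\nu_n$ directly on $O(N_n)$, observes that each column is marginally uniform on $S^{N_n-1}$ (Proposition~\ref{measures}), applies the concentration estimate to each of the $N_n=n^2$ columns separately, and then takes a plain union bound. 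Since the concentration decays like $e^{-c_1 n^{4/q}r^2}$, the polynomial loss $n^2$ from the union bound is harmless, and one obtains
\[
\nu_n\Big(\exists k:\;\big|\,\|b_k\|_{L^q}-M_{n,q}\,\big|>r\Big)\le 2n^2 e^{-c_1 n^{4/q}r^2},
\]
which already controls the whole basis at once. No iterated projection is needed. A second, minor difference: you invoke Sogge's projector bounds for the Lipschitz constant of $\|\cdot\|_{L^q}$ on $E_n$, whereas the paper gets the needed inequality $\|u\|_{L^q}\le Cn^{1-2/q}\|u\|_{L^2}$ by the elementary observation that the diagonal of the reproducing kernel $K_n(x,x)=\sum_k|f_{n,k}(x)|^2$ is constant on $S^3$ (by $SO(4)$-invariance) and equals $Cn^2$, which gives the $L^\infty$ endpoint and then interpolation.
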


To be more precise, Burq and Lebeau proved the following proposition. Set $E_n$ the vector space spanned by the spherical harmonics of degree $n-1$ and $U_n$ the set of orthonormal basis of $E_n$.

\begin{proposition}[Burq, Lebeau]\label{appendum} There exist a family of measure $\nu_n$ on $U_n$ with $\nu_n(U_n) = 1$ and two constants $c_0,c_1>0$ such that for all $p\geq 2$, all $n\in \N^*$, and all $\Lambda \geq 0$, the probability : 
$$
\nu_n ( \lbrace (e_{n,k})_{1\leq k\leq (n+1)^2} \in U_n \; | \; \exists k, |\|e_{n,k}\|_{L^p} - M_{n,p}| >\Lambda \rbrace ) 
$$
where $M_{n,p}$ is a real number bounded by $C\sqrt p$ with $C$ independent from $n$ and $p$, that is the probability that the difference between $M_{n,p}$ and the norm of at least one of the functions of the basis is bigger than $\Lambda$ is bounded by : 
$$
c_0e^{-c_1 n^{4/p}\Lambda^2 }n^2 \; .
$$
\end{proposition}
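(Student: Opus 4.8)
The plan is to deduce the stated proposition as a probabilistic reformulation of the concentration/moment estimates for random orthonormal bases of $E_n$. The starting point is the observation that $E_n$ is finite-dimensional of dimension $(n+1)^2$ (the spherical harmonics of degree $n-1$ on $S^3$), and that the natural ``uniform'' probability measure $\nu_n$ on the set $U_n$ of orthonormal bases is the pushforward of Haar measure on the orthogonal (or unitary) group $O(E_n)$ acting transitively on $U_n$. Under this measure, a single basis vector $e_{n,k}$ is a uniformly distributed unit vector in $E_n$; by invariance of Haar measure, each $e_{n,k}$ has the same law, so it suffices to control $\|e_{n,1}\|_{L^p}$ and then pay a union-bound factor of $(n+1)^2 \le C n^2$ for the $\exists k$ over the $(n+1)^2$ basis elements.

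First I would identify $M_{n,p}$ as the median (or, up to harmless constants, the $L^2(\nu_n)$-mean) of the random variable $\|e_{n,1}\|_{L^p}$, and recall the key input from \cite{BLinj}: the pointwise bound on the spectral projector kernel on $S^3$ gives, after integrating against the uniform unit vector, that $M_{n,p} \le C\sqrt{p}$ with $C$ independent of $n$ and $p$ — this is exactly the statement that a ``typical'' element of a random orthonormal basis behaves like a Gaussian field with $L^2$-normalization, whose $L^p$ norms grow like $\sqrt p$. Second, the deviation estimate: I would invoke concentration of measure on the sphere $S(E_n)$ (Lévy's isoperimetric inequality, or equivalently the log-Sobolev/Gaussian concentration after lifting to a Gaussian vector in $\R^{(n+1)^2}$ and normalizing). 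The map $\omega \mapsto \|\omega\|_{L^p(S^3)}$ is Lipschitz on $S(E_n)$ with Lipschitz constant $\sup_{\|\omega\|_{L^2}=1}\|\omega\|_{L^\infty} \cdot (\text{a } p\text{-dependent factor from } \|\cdot\|_{L^p}\le \|\cdot\|_{L^\infty}^{1-2/p}\|\cdot\|_{L^2}^{2/p})$; the sup-norm of an $L^2$-normalized spherical harmonic of degree $n-1$ is $\lesssim n$, and tracking the exponents yields a Lipschitz constant $\lesssim n^{1-2/p}$. Concentration on a sphere of dimension $d = (n+1)^2 - 1 \sim n^2$ then gives a tail bound of the form $c_0 \exp(-c_1 d \Lambda^2 / L^2) = c_0\exp(-c_1 n^2 \Lambda^2 / n^{2-4/p}) = c_0 \exp(-c_1 n^{4/p}\Lambda^2)$, which is precisely the claimed exponent. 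Multiplying by the union-bound factor $n^2$ closes the estimate.

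The steps in order: (1) set up $\nu_n$ as Haar pushforward and reduce to a single basis vector $e_{n,1}$ uniform on $S(E_n)$; (2) establish $M_{n,p}\le C\sqrt p$ from the spectral projector estimate of \cite{BLinj}; (3) compute the Lipschitz constant of $\omega\mapsto\|\omega\|_{L^p}$ on $S(E_n)$ in terms of the $L^\infty$ bound on degree-$(n-1)$ harmonics, getting $\lesssim n^{1-2/p}$; (4) apply Lévy concentration on $S^{d-1}$ with $d\sim n^2$ to get the single-vector tail $c_0 e^{-c_1 n^{4/p}\Lambda^2}$; (5) union bound over the $(n+1)^2\le Cn^2$ indices $k$.

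The main obstacle is step (3)–(4): getting the exponent $n^{4/p}$ exactly right, which hinges on the correct interplay between the dimension $\dim E_n \sim n^2$ entering the concentration inequality and the $L^\infty$-to-$L^p$ interpolation exponent $1 - 2/p$ entering the Lipschitz constant, together with the (nontrivial, and the real content of \cite{BLinj}) uniform-in-$n$ bound $M_{n,p}\lesssim\sqrt p$ on the mean. One must also be slightly careful that the concentration argument is for a Lipschitz function and the $L^p$ norm is only Lipschitz after the interpolation bound, so the clean statement really requires quoting the finer analysis of Burq–Lebeau rather than reproving it; accordingly I would present this proof as essentially a recollection of \cite[Section 3]{BLinj}, stating which ingredients are used and how the exponent arises, rather than a self-contained derivation.
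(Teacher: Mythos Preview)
Your proposal is correct and matches the paper's own argument in the appendix almost step for step: $\nu_n$ is Haar measure on $O(N_n)$, each column is uniform on the unit sphere of $E_n$, the $L^p$ norm is Lipschitz with constant $Cn^{1-2/p}$ via the kernel bound $\|u\|_{L^\infty}\le Cn\|u\|_{L^2}$ and interpolation, L\'evy concentration on $S^{N_n-1}$ with $N_n\sim n^2$ gives the exponent $n^{4/p}$, and a union bound contributes the factor $n^2$. Two minor remarks: your worry in step~(3)--(4) is unwarranted, since the Lipschitz bound follows immediately from the triangle inequality $|\,\|u\|_{L^p}-\|v\|_{L^p}|\le\|u-v\|_{L^p}\le Cn^{1-2/p}\|u-v\|_{L^2}$; and the paper does not merely cite \cite{BLinj} for $M_{n,p}\le C\sqrt p$ but proves it directly by bounding the $q$-th moment $A_{n,q}^q=E(\|\cdot\|_{L^q}^q)$ via the Gaussian-type tail $p_n(|x_1|>t)\le 2e^{-(N_n-1)t^2/2}$ and then using Markov's inequality to pass from mean to median.
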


In the Appendix \ref{sec-uniform}, we give a straightforward proof of this Proposition, without using the general framework that Burq and Lebeau used, but largely inspired by their paper, and we deduce from it that there exists a sequence $p_m \rightarrow \infty$ and a basis $e_{n,k}$ of spherical harmonics such that
\begin{equation}\label{uniform}
\|e_{n,k}\|_{L^{p_m}} \leq C\sqrt{p_m} \; .
\end{equation}

As afore-mentioned, the main difference between this section of this paper and the one by by Burq and Tzvetkov \cite{twon} is that in their paper, the basis of $L^2$ is bounded uniformly in $L^p$, but uniformly in terms of $p$ too. This property allows them to ask for an almost sure $L^p$ bound (so to speak) on the initial datum and take $p \rightarrow \infty$. The difference will appear and be detailed later.

Let $\sigma \in [0,\frac{1}{2}[$ and $(u^{n,k}_0)_{n,k}$ and $(u^{n,k}_1)_{n,k}$ be two sequences of real numbers such that the series

$$\sum_{n\geq 1} n^{2\sigma} \sum_{1\leq k\leq (n+1)^2} (u_0^{n,k})^2 \mbox{ and } \sum_{n\geq 1} n^{2(\sigma-1)} \sum_{1\leq k\leq (n+1)^2} (u_1^{n,k})^2$$
converge but at least one of the series

$$\sum_{n\geq 1} n \sum_{1\leq k\leq (n+1)^2} (u_0^{n,k})^2 \mbox{ or } \sum_{n\geq 1} n^{-1} \sum_{1\leq k\leq (n+1)^2} (u_1^{n,k})^2$$
diverges, that is to say

$$\left( \sum_{n,k} u_0^{n,k}e_{n,k} , \sum_{n,k} u_1^{n,k}e_{n,k}\right)$$
belongs to $H^\sigma\times H^{\sigma -1}$ but is not necessarily in the critical space for the cubic NLW $H^{1/2}\times H^{-1/2}$.

Let $(X, \mathcal A , P)$ be a probability space large enough such that two sequences $(a_{n,k})$ and $(b_{n,k})$ of random variables can be taken satisfying that the $a_{n,k}$ are independent from each other and from the $b_{n,k}$, the $b_{n,k}$ are independent from each other and that there exists $a$ such that for all $n,k$ and all $\gamma \in \R$ :

$$E(e^{\gamma a_{n,k}}), E(e^{\gamma b_{n,k}}) \leq e^{a\gamma^2}$$
where $E$ is the mean value with respect to the probability measure $P$, which ensures that the random variables have Gaussian large deviation estimates.

\begin{proposition}The sequences of $L^2(X,H^\sigma (S^3))$ and $L^2(X,H^{\sigma -1}(S^3))$ respectively

$$u_0^N  = \sum_{n=1}^N \sum_{k=1}^{(n+1)^2} u_0^{n,k}a_{n,k}e_{n,k} \mbox{ and } u_1^N = \sum_{n=1}^N \sum_{k=1}^{(n+1)^2} u_1^{n,k}b_{n,k}e_{n,k} $$
converges. Let $u_0$ and $u_1$ their limits.\end{proposition}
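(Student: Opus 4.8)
The plan is to show that $(u_0^N)_N$ is a Cauchy sequence in the Hilbert space $L^2(X,H^\sigma(S^3))$, and likewise that $(u_1^N)_N$ is Cauchy in $L^2(X,H^{\sigma-1}(S^3))$; completeness of these spaces then provides the limits $u_0$ and $u_1$. Each $u_0^N$ is a finite linear combination of the $e_{n,k}$ with square-integrable random coefficients, so it plainly belongs to the relevant space, and no measurability issue arises.

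First I would reduce the $H^\sigma$ norm to a weighted $\ell^2$ sum over the coefficients. Since $(e_{n,k})_{n,k}$ is a Hilbertian basis of $L^2(S^3)$ made of eigenfunctions of $1-\lap_{S^3}$ with eigenvalue $n^2$, for any finite family of scalars $(c_{n,k})$ one has $\|\sum_{n,k} c_{n,k}e_{n,k}\|_{H^\sigma}^2 = \sum_{n,k} n^{2\sigma}|c_{n,k}|^2$. Applying this pointwise in $\omega\in X$ with $c_{n,k}=u_0^{n,k}a_{n,k}(\omega)$, and then integrating over $X$ (the integrand being nonnegative, Tonelli allows exchanging the integral and the sum and using independence is not even needed), I obtain for $N<M$
$$\|u_0^M-u_0^N\|_{L^2(X,H^\sigma)}^2 = \sum_{n=N+1}^M n^{2\sigma}\sum_{k=1}^{(n+1)^2}(u_0^{n,k})^2\,E(a_{n,k}^2).$$

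Next I would bound the second moments $E(a_{n,k}^2)$ uniformly using the large deviation hypothesis. From $E(e^{\gamma a_{n,k}})\leq e^{a\gamma^2}$ for every $\gamma$, together with the elementary inequality $2\cosh(x)\geq 2+x^2$, one gets $2+\gamma^2 E(a_{n,k}^2)\leq E(e^{\gamma a_{n,k}})+E(e^{-\gamma a_{n,k}})\leq 2e^{a\gamma^2}$; taking $\gamma=1$ yields $E(a_{n,k}^2)\leq 2(e^a-1)=:\kappa$, a constant independent of $n$ and $k$, and the same bound holds for $E(b_{n,k}^2)$. Consequently
$$\|u_0^M-u_0^N\|_{L^2(X,H^\sigma)}^2 \leq \kappa\sum_{n=N+1}^M n^{2\sigma}\sum_{k=1}^{(n+1)^2}(u_0^{n,k})^2,$$
which is a tail of the convergent series $\sum_{n\geq 1} n^{2\sigma}\sum_k (u_0^{n,k})^2$ and hence tends to $0$ as $N,M\to\infty$. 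This proves $(u_0^N)$ is Cauchy, hence convergent; replacing $\sigma$ by $\sigma-1$, $u_0^{n,k}$ by $u_1^{n,k}$, and $a_{n,k}$ by $b_{n,k}$, the very same computation gives the convergence of $(u_1^N)$ in $L^2(X,H^{\sigma-1}(S^3))$.

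There is no serious obstacle: the statement is a soft consequence of the orthonormality of the basis and the uniform $L^2$-integrability of the random coefficients, and the only mildly non-automatic point is extracting the uniform bound on $E(a_{n,k}^2)$ from the exponential moment estimate, which is the elementary inequality above. Note in particular that the $L^p$ bounds \reff{uniform} on the basis play no role at this stage; they are needed only later, to make $U(T)(u_0,u_1)$ enjoy space–time $L^p$ integrability.
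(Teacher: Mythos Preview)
Your proof is correct and follows exactly the approach sketched in the paper: show the partial sums are Cauchy in $L^2(X,H^\sigma)$ (resp.\ $L^2(X,H^{\sigma-1})$) by using orthonormality of the $e_{n,k}$ together with a uniform bound on $E(a_{n,k}^2)$ and $E(b_{n,k}^2)$ derived from the exponential moment hypothesis. The paper merely asserts that these second moments are uniformly bounded (by $8a$) and that this yields the Cauchy property; you have supplied the missing details, with a slightly different but equally valid constant $2(e^a-1)$.
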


\begin{proof} The proof consists in the fact that the mean values of $a_{n,k}^2$ and $b_{n,k}^2$ are uniformly bounded by $8a$. It ensures that the sequences are Cauchy in their respective spaces and therefore that they converge.\end{proof}

Let $U(T)$ be the flow of the linear equation $(\partial_T^2+1-\lap_{S^3}) u = 0$, that is

$$U(T)\left( \sum_{n,k} v^{n,k}_0 e_{n,k}, \sum_{n,k} v_1^{n,k} e_{n,k} \right) = \sum_{n,k} (\cos(nT) v_0^{n,k} + \frac{\sin (nT)}{n} v_1^{n,k})e_{n,k} \; .$$ 

Set

$$S_M^N = \sum_{n=N}^M \sum_k n^{2\sigma}(u_0^{n,k})^2 + n^{2(\sigma -1)}(u_1^{n,k})^2 \;  ,$$

$$S_M = S_M^0\; , S^N = \lim_{M\rightarrow \infty }S_M^{N+1} \mbox{ and } S = S_N+S^N\; .$$

Set also $\Pi_N, N\geq 0$ the orthogonal projection on the subspace of $L^2$ spanned by $\lbrace e_{n,k} \; |\; n\leq N\rbrace$ with the convention $\Pi_0 = 0$.

The initial data $u_0,u_1$ satisfy some properties regarding the spaces where they belong. With $p_m$ the sequence that goes to $\infty$ for which we have the uniform bound \eqref{uniform} : 

\begin{proposition}\label{proba}There exists $C,c>0$ such that for all $\lambda\geq 0$ : \begin{itemize}
\item for all $N\geq 0$, all $p_m$ and with $\delta_{p_m} = \frac{2}{p_m} > \frac{1}{p_m}$

$$P(||\frac{1}{1+|T|^{\delta_{p_m}}}(1-\Pi_N)U(T)(u_0,u_1)||_{L^{p_m}(\R\times S^3)}\geq \lambda ) \leq \left( \frac{C p_m \sqrt{S^N}}{\lambda}\right)^{p_m}\; ,$$
\item with $\delta_3 = 2/3 > 1/3$,

$$P(||\frac{1}{1+|T|^{\delta_3}}U(T)(u_0,u_1)||_{L^3_T,L^6(S^3)}\geq \lambda ) \leq C e^{-c\lambda^2/S} \; $$
\item for all $M\geq 1$ and with $s=1$ if $\sigma = 0$ and $s=0$ otherwise

$$P(||\frac{1}{1+T^2}\Pi_M U(T)(u_0,u_1)||_{L^1_T,L^\infty(S^3)} \geq \lambda ) \leq Ce^{-c\lambda^2/(M^s S)}\; .$$\end{itemize}
\end{proposition}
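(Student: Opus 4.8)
\emph{Proof plan.}
All three bounds rest on the same mechanism: for a well‑chosen exponent $p$ I estimate the $L^p(\Omega)$‑moment of the relevant space--time norm of $U(T)(u_0,u_1)$ by $C\sqrt p$ times an $\ell^2$‑quantity built from the coefficients and from $L^p(S^3)$‑norms of the $e_{n,k}$, and then convert this into a tail bound: by Markov's inequality with $p=p_m$ for the first item, and for the other two by the elementary lemma stating that $E(X^p)\le (A\sqrt p)^p$ for all $p\ge p_0$ implies $P(X\ge\lambda)\le C e^{-c\lambda^2/A^2}$. The single probabilistic input is the Khinchin‑type inequality coming from the large Gaussian deviation assumption: if $(g_j)_j$ are independent with $E(e^{\gamma g_j})\le e^{a\gamma^2}$ for all $\gamma$, then $E\big(e^{\gamma\sum_j c_jg_j}\big)\le e^{a\gamma^2\sum_j c_j^2}$, hence $\big\|\sum_j c_jg_j\big\|_{L^p(\Omega)}\le C\sqrt{ap}\,\big(\sum_j c_j^2\big)^{1/2}$ for $p\ge 2$. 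I apply this pointwise in $(T,x)$ to
$$U(T)(u_0,u_1)(x)=\sum_{n,k}\big(\cos(nT)\,u_0^{n,k}e_{n,k}(x)\big)a_{n,k}+\big(\tfrac{\sin(nT)}{n}u_1^{n,k}e_{n,k}(x)\big)b_{n,k},$$
which for fixed $(T,x)$ is exactly such a series; with $\alpha_{n,k}:=(u_0^{n,k})^2+n^{-2}(u_1^{n,k})^2$ and $\cos^2,\sin^2\le 1$ this yields $\|U(T)(u_0,u_1)(x)\|_{L^p(\Omega)}\le C\sqrt p\,\big(\sum_{n,k}\alpha_{n,k}|e_{n,k}(x)|^2\big)^{1/2}$, and $\sum_{n,k}\alpha_{n,k}\le S$ since $\sigma\ge 0$, $n\ge 1$.

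\emph{First and second items.} For the first, restrict the series to $n>N$; since the outer norm is in $L^{p_m}(\R\times S^3)$ with the \emph{same} exponent $p_m$ as in $\Omega$, Fubini interchanges the two, and the pointwise Khinchin bound followed by the triangle inequality in $L^{p_m/2}(S^3)$ and the uniform bound \eqref{uniform} give $\big\|\big(\textstyle\sum_{n>N,k}\alpha_{n,k}|e_{n,k}|^2\big)^{1/2}\big\|_{L^{p_m}(S^3)}\le\big(\sum_{n>N,k}\alpha_{n,k}\|e_{n,k}\|_{L^{p_m}}^2\big)^{1/2}\le C\sqrt{p_m}\sqrt{S^N}$, while the time weight contributes $\|(1+|T|^{\delta_{p_m}})^{-1}\|_{L^{p_m}(\R)}\le 4^{1/p_m}\le C$ because $(1+|T|^{2/p_m})^{p_m}\ge\max(1,|T|^2)$. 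Hence the $L^{p_m}(\Omega\times\R\times S^3)$‑norm is $\le C p_m\sqrt{S^N}$ and Markov's inequality is the conclusion; this is exactly the step where the \emph{$p$‑independent} bound \eqref{uniform} is used, a merely $p$‑dependent $C_{p_m}$ producing $p_m^{1/2}C_{p_m}$ instead of $p_m$. For the second, fix $p\ge 6$: Minkowski's integral inequality pushes $L^p(\Omega)$ inside $L^3_T$ and then inside $L^6_x$ (legal as $p\ge 6\ge 3$), and the pointwise bound above together with the triangle inequality in $L^3(S^3)$ and the Burq--Lebeau bound $\|e_{n,k}\|_{L^6}\le C_6$ give the $L^6_x$‑norm $\le C\sqrt p\sqrt S$, uniformly in $T$. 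Since $(1+|T|^{2/3})^3\ge\max(1,|T|^2)$ makes $\|(1+|T|^{\delta_3})^{-1}\|_{L^3(\R)}$ finite, this gives $E\big(\|(1+|T|^{\delta_3})^{-1}U(T)(u_0,u_1)\|_{L^3_TL^6_x}^p\big)\le(C\sqrt p\sqrt S)^p$ for all $p\ge 6$, and the tail lemma concludes.

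\emph{Third item.} The only new difficulty is controlling $\|\cdot\|_{L^\infty(S^3)}$ of the spectrally localized function $\Pi_M U(T)(u_0,u_1)$ with the correct loss in $M$, which I do via the embedding $W^{s'',q}(S^3)\hookrightarrow L^\infty(S^3)$ (valid whenever $s''q>3$) with $q$ large and $s''=\sigma$ when $\sigma>0$ (permissible since then $q$ may be taken $>3/\sigma$) and $s''=\tfrac12$, $q>6$, when $\sigma=0$. For $p\ge q$, Minkowski pushes $L^p(\Omega)$ inside $L^1_T$ and inside the $L^q_x$‑based norm; since $(1-\lap_{S^3})^{s''/2}\Pi_M U(T)(u_0,u_1)=\sum_{n\le M,k}n^{s''}v_{n,k}(T)e_{n,k}$, with $v_{n,k}(T)$ the $(n,k)$‑coefficient of $U(T)(u_0,u_1)$, the pointwise Khinchin bound together with the triangle inequality in $L^{q/2}(S^3)$ and $\|e_{n,k}\|_{L^q}\le C_q$ bounds the $L^p(\Omega)L^q_x$‑norm of $(1-\lap_{S^3})^{s''/2}\Pi_M U(T)(u_0,u_1)$ by $C\sqrt p\,\big(\sum_{n\le M,k}n^{2s''}\alpha_{n,k}\big)^{1/2}$. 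By the choice of $s''$, $\sum_{n\le M,k}n^{2s''}\alpha_{n,k}=\sum_{n\le M,k}n^{2s''}(u_0^{n,k})^2+n^{2s''-2}(u_1^{n,k})^2\le M^sS$: it is $\le S$ when $\sigma>0$ since $s''\le\sigma$, and $\le MS$ when $\sigma=0$ since $n^{2s''}\le M$ and $n^{2s''-2}\le Mn^{-2}$ for $n\le M$. With $\|(1+T^2)^{-1}\|_{L^1(\R)}=\pi$ this gives $E\big(\|(1+T^2)^{-1}\Pi_M U(T)(u_0,u_1)\|_{L^1_TL^\infty_x}^p\big)\le(C\sqrt p\,\sqrt{M^sS})^p$ for all $p\ge q$, and the tail lemma finishes.

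\emph{Main obstacle.} Once the scheme is set up everything is routine; the delicate point is the $L^\infty_x$ estimate of the third item, where the exponent of $M$ is dictated by the gap between the available regularity $\sigma$ and the regularity needed for a Sobolev embedding into $L^\infty(S^3)$. Lowering that regularity to $\max(\sigma,\tfrac12)$ by taking the integrability exponent $q$ large is precisely what produces the sharp factors $M^0$ (for $\sigma>0$) and $M^1$ (for $\sigma=0$); the crude embedding $H^{3/2+\varepsilon}\hookrightarrow L^\infty$ would instead cost $\sim M^{3-2\sigma}$. A minor bookkeeping matter is to order the Fubini/Minkowski commutations so that the $\Omega$‑exponent always dominates the space or time exponent it is moved past, which is why the moment bounds are first proved for $p\ge 6$, resp. $p\ge q$; this is immaterial since the tail lemma only uses $p\to\infty$.
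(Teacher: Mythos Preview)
Your proposal is correct and follows essentially the same approach as the paper: the paper packages the common step (pointwise Khinchin inequality, Minkowski to reorder norms with $L^q_\Omega$ outermost, triangle inequality in $L^{p/2}(S^3)$, and the bound $\|e_{n,k}\|_{L^p}\le C\sqrt p$ or $C_p$) into an intermediate lemma and then specializes it to $(r,p,s)=(p_m,p_m,0)$, $(3,6,0)$, and $(1,q,\sigma)$ or $(1,q,\tfrac12)$ with $q$ large, exactly as you do. Your handling of the $L^\infty_x$ bound via $W^{s'',q}\hookrightarrow L^\infty$ with $s''=\sigma$ (resp.\ $\tfrac12$) and the resulting $M^0$ (resp.\ $M^1$) loss matches the paper's argument verbatim.
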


The difference between the choices for $\sigma$ in the third inequality is due to the fact that if $\sigma > 0$ then by Sobolev embeddings, there exists some $p$ large enough such that $W^{\sigma,p}$ is embedded in $L^\infty$. In the proof, it will appear that $(1+T^2)^{-1}U(T)(u_0,u_1)$ is almost surely and with the same deviation estimates in $L^1(\R,W^{\sigma,p})$, the $L^1$ norm being taken on the time. Hence, when $\sigma > 0$, the bound does not depend on $M$, as we can take the left hand-side of the inequality with $M\rightarrow \infty$. For $\sigma = 0$, we can not apply Sobolev embedding, thus the bound depends on $M$, we chose $s = 1$ but we could have chosen any $s>0$. 

In the proof, we will write $p$ instead of $p_m$. 

\begin{remark}This proposition differs from the similar one in the torus case, \cite{twon} where the first inequality corresponded to : 

$$P(||\frac{1}{1+|T|^{\delta_p}}(1-\Pi_N)U(T)(u_0,u_1)||_{L^p(\R\times S^3)}\geq \lambda ) \leq \left( \frac{C \sqrt p \sqrt{S^N}}{\lambda}\right)^p \; .$$\end{remark}

\begin{proof}
\begin{lemma} There exists $C$ such that for all $q\geq 1$ and all couples of $l^2$ sequences $v_{n,k},w_{n,k}$ :

$$||\sum_{n,k} a_{n,k}v_{n,k}+b_{n,k}w_{n,k}||_{L^q_X} \leq C \sqrt q \left( \sum |v_{n,k}|^2+ |w_{n,k}|^2 \right)^{1/2} \; .$$\end{lemma}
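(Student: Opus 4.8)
The plan is to prove this as a Khintchine-type inequality for independent random variables with subgaussian moment generating functions, establishing it first for finite sums and then extending to the $\ell^2$ series by a Cauchy-sequence argument. Write $\sigma^2 = \sum_{n,k}\left(|v_{n,k}|^2 + |w_{n,k}|^2\right)$ and $S = \sum_{n,k}\left(a_{n,k}v_{n,k} + b_{n,k}w_{n,k}\right)$ (the sequences being real, as in the intended application; the complex case reduces to this by splitting into real and imaginary parts). Since $P$ is a probability measure, $\|\cdot\|_{L^q_X}$ is nondecreasing in $q$, so it suffices to treat $q \geq 2$, the range $1 \leq q \leq 2$ following from the case $q=2$ and $\sqrt q \geq 1$. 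For a finite index set $F$, set $S_F = \sum_{(n,k)\in F}\left(a_{n,k}v_{n,k} + b_{n,k}w_{n,k}\right)$. Using the mutual independence of the $a_{n,k}$ and $b_{n,k}$ together with the hypothesis $E(e^{\gamma a_{n,k}}), E(e^{\gamma b_{n,k}}) \leq e^{a\gamma^2}$ (applied with exponents $\gamma v_{n,k}$ and $\gamma w_{n,k}$), one gets, for every $\gamma \in \mathbb{R}$,
\[ E\left(e^{\gamma S_F}\right) = \prod_{(n,k)\in F} E\left(e^{\gamma v_{n,k} a_{n,k}}\right)\, E\left(e^{\gamma w_{n,k} b_{n,k}}\right) \leq \exp\left(a\gamma^2 \sum_{(n,k)\in F}\left(|v_{n,k}|^2 + |w_{n,k}|^2\right)\right) \leq e^{a\gamma^2 \sigma^2}. \]

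Next I would turn this into a moment bound by the Chernoff method. For $\lambda > 0$, optimizing $P(S_F \geq \lambda) \leq e^{-\gamma\lambda + a\gamma^2\sigma^2}$ over $\gamma>0$ (take $\gamma = \lambda/(2a\sigma^2)$) gives $P(S_F \geq \lambda) \leq e^{-\lambda^2/(4a\sigma^2)}$, and the same bound for $-S_F$, hence $P(|S_F| \geq \lambda) \leq 2e^{-\lambda^2/(4a\sigma^2)}$. Integrating,
\[ E\left(|S_F|^q\right) = q\int_0^\infty \lambda^{q-1}P(|S_F|\geq\lambda)\,d\lambda \leq 2q\int_0^\infty \lambda^{q-1}e^{-\lambda^2/(4a\sigma^2)}\,d\lambda = q\,\Gamma(q/2)\,(4a\sigma^2)^{q/2}, \]
and since $q\,\Gamma(q/2) \leq (Cq)^{q/2}$ for $q \geq 2$ by Stirling's formula, this yields $\|S_F\|_{L^q_X} \leq C\sqrt{q}\,\sigma$ with $C$ independent of $q$ and of $F$.

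Finally, to pass to the infinite sum, I would apply the bound just obtained to the difference $S_{F'} - S_F$ for finite sets $F \subset F'$: this is again a sum of the same form over $F'\setminus F$, so $\|S_{F'} - S_F\|_{L^q_X} \leq C\sqrt{q}\left(\sum_{(n,k)\in F'\setminus F}\left(|v_{n,k}|^2 + |w_{n,k}|^2\right)\right)^{1/2}$, which tends to $0$ as $F,F' \uparrow \mathbb{N}^2$ because $(v_{n,k}),(w_{n,k}) \in \ell^2$. Hence the partial sums form a Cauchy sequence in the complete space $L^q_X$; their limit coincides (up to a.s. equality) with the $L^2_X$-limit $S$, and the bound $\|S_F\|_{L^q_X} \leq C\sqrt q\,\sigma$ passes to the limit by Fatou's lemma.

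I do not expect a genuine obstacle: the argument is routine. The only two points requiring a little attention are the uniformity in $q$ of the constant coming from $q\,\Gamma(q/2)$ — cleanly handled by Stirling, since $\big(q\,\Gamma(q/2)\big)^{1/q} \leq q^{1/q}(q/2)^{1/2} \leq C\sqrt q$ — and the interchange of the finite-sum limit with the $L^q_X$-norm, which is exactly the Cauchy argument above; neither is delicate.
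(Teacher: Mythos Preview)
Your argument is correct and is the standard proof of this Khintchine-type inequality: the subgaussian moment-generating-function hypothesis, independence, Chernoff's bound, integration of the tail, and Stirling for $q\Gamma(q/2)\leq (Cq)^{q/2}$ all work exactly as you describe, and the Cauchy-sequence passage to the infinite sum is clean. The paper itself does not prove this lemma at all; it simply refers to \cite{BTranI}, Lemma~3.1, where the same approach (subgaussian tails $\Rightarrow$ moment bound with the $\sqrt q$ factor) is carried out, so your proof is essentially the one the paper is pointing to.
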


%
%
%
%
%
%
%
%
%
%
%
%
%
%
%
%

The proof can be found in \cite{BTranI}, Lemma 3.1.

\begin{lemma}\label{probtool}There exists $C$ such that for all $r,p \geq 1$, $s\geq 0$, $M>N\geq 0$ and $q\geq r,p$,

$$||\frac{1}{1+|T|^{2/r}} (1-\Pi_N) (1-\lap)^{s/2} U(T)(u_0^M,u_1^M)||_{L^q_X,L^r_T,L^p(S^3)} \leq C \sqrt p \sqrt q M^{s'} \sqrt{S_M -S_N}$$
with $s' = s-\sigma$ if $s\geq \sigma$ and $s'=0$ otherwise.\end{lemma}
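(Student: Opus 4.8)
The plan is to diagonalise $U(T)(u_0^M,u_1^M)$ in the basis $(e_{n,k})$, to apply the Gaussian-type large deviation lemma quoted above (from \cite{BTranI}) pointwise in the space and time variables, then to use the uniform bound \eqref{uniform} on the $L^p$ norms of the $e_{n,k}$, and finally to integrate in time against the weight $(1+|T|^{2/r})^{-1}$. First, since $(1-\lap)e_{n,k}=n^2e_{n,k}$ and $\Pi_N$ is the projection onto the frequencies $\leq N$,
$$(1-\Pi_N)(1-\lap)^{s/2}U(T)(u_0^M,u_1^M)=\sum_{n=N+1}^M\sum_k n^s\Big(\cos(nT)\,u_0^{n,k}\,a_{n,k}+\tfrac{\sin(nT)}{n}\,u_1^{n,k}\,b_{n,k}\Big)e_{n,k}\,.$$
Because $q\geq r$ and $q\geq p$, two applications of Minkowski's integral inequality move the probabilistic norm innermost, so that it is enough to estimate this quantity in $L^r_TL^p(S^3)L^q_X$ rather than in $L^q_XL^r_TL^p(S^3)$.

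Fixing $T$ and $x\in S^3$, I would then apply the Gaussian lemma with $v_{n,k}=n^s\cos(nT)u_0^{n,k}e_{n,k}(x)$ and $w_{n,k}=n^{s-1}\sin(nT)u_1^{n,k}e_{n,k}(x)$; using $|\cos|,|\sin|\leq1$ this gives
$$\big\|(1-\Pi_N)(1-\lap)^{s/2}U(T)(u_0^M,u_1^M)(x)\big\|_{L^q_X}\leq C\sqrt q\,\Big(\sum_{n=N+1}^M\sum_k\big(n^{2s}(u_0^{n,k})^2+n^{2(s-1)}(u_1^{n,k})^2\big)\,|e_{n,k}(x)|^2\Big)^{1/2}\,.$$
Taking the $L^p(S^3)$ norm in $x$, using the triangle inequality in $L^{p/2}$ when $p\geq2$ (and $L^2(S^3)\hookrightarrow L^p(S^3)$ when $p<2$, $S^3$ being of finite measure), together with $\|e_{n,k}\|_{L^p}\leq C\sqrt p$ — which is \eqref{uniform} for $p=p_m$, and follows from it by Hölder's inequality for the remaining $p$ provided the $p_m$ are taken with, say, $p_{m+1}\leq2p_m$ — I obtain, for every $T$,
$$\big\|(1-\Pi_N)(1-\lap)^{s/2}U(T)(u_0^M,u_1^M)\big\|_{L^p(S^3)L^q_X}\leq C\sqrt{pq}\,\Big(\sum_{n=N+1}^M\sum_k n^{2s}(u_0^{n,k})^2+n^{2(s-1)}(u_1^{n,k})^2\Big)^{1/2}\,.$$
Since $N+1\leq n\leq M$, the bounds $n^{2s}\leq M^{2s'}n^{2\sigma}$ and $n^{2(s-1)}\leq M^{2s'}n^{2(\sigma-1)}$ hold when $s\geq\sigma$, while $n^{2s}\leq n^{2\sigma}$ and $n^{2(s-1)}\leq n^{2(\sigma-1)}$ hold when $s<\sigma$ (that is $s'=0$), so the last sum is $\leq M^{2s'}(S_M-S_N)$.

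The remaining right-hand side depends on $T$ only through the weight, so taking its $L^r_T(\R)$ norm produces the factor $\big\|(1+|T|^{2/r})^{-1}\big\|_{L^r_T(\R)}$, which is bounded uniformly in $r\geq1$: indeed $\int_\R(1+|T|^{2/r})^{-r}\,dT=r\,B(r/2,r/2)$, an integrand decaying like $|T|^{-2}$ at infinity — this is precisely why the weight carries the exponent $2/r$. Collecting the estimates gives the claimed bound. The only real subtlety, and the point where this argument departs from the torus case of \cite{twon}, is the factor $\sqrt p$ coming from the $L^p$ norms of the basis: it must be carried through the reordering of the norms without deterioration; the rest is bookkeeping with Minkowski's inequality and with the uniform-in-$r$ finiteness of the time weight.
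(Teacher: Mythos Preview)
Your proof is correct and follows essentially the same route as the paper: apply the Gaussian lemma pointwise in $(T,x)$, use Minkowski (with $q\ge r,p$) to bring the $L^q_X$ norm inside, bound $\|\Sigma_N^M\|_{L^{p/2}}^{1/2}$ via the triangle inequality and $\|e_{n,k}\|_{L^p}\le C\sqrt p$, absorb the factor $n^{2s}$ into $M^{2s'}n^{2\sigma}$, and finally integrate the time weight. The extra care you take with the case $p<2$ and with passing from the sequence $p_m$ to general $p$ by H\"older is not spelled out in the paper's version but is harmless and consistent with it.
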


\begin{proof} Let 

$$\Sigma_N^M (x) = \sum_{n = N+1}^M \sum_{k=1}^{1+n^2}n^{2s}\left( (u_0^{n,k})^2 +n^{-2}(u_1^{n,k})^2\right) |e_{n,k}(x)|^2\; .$$

Using the previous lemma and bounding the sines and cosines by $1$ in 

$$(1-\lap)^{s/2} U(T)(u_0^M,u_1^M)\; ,$$
it appears that

$$||\frac{1}{1+|T|^{2/r}} (1-\Pi_N) (1-\lap)^{s/2} U(T)(u_0^M,u_1^M)||_{L^q_X} \leq \frac{C}{1+|T|^{2/r}}\sqrt q \sqrt{ \Sigma_N^M (x)}\; .$$

Hence, as $q\geq r,p$, and thanks to Minkowski inequality, one can reverse the order of the norms : 

$$||\frac{1}{1+|T|^{2/r}} (1-\Pi_N) (1-\lap)^{s/2} U(T)(u_0^M,u_1^M)||_{L^q_X,L^r_T,L^p(S^3)} $$
$$\leq ||\frac{1}{1+|T|^{2/r}} (1-\Pi_N) (1-\lap)^{s/2} U(T)(u_0^M,u_1^M)||_{L^r_T,L^p(S^3),L^q_X}$$

$$\leq C \sqrt q ||\frac{1}{1+|T|^{2/r}}||_{L^r_T} ||\Sigma_N^M ||_{L^{p/2}}^{1/2}\; .$$

The map $\frac{1}{1+|T|^{2/r}}$ is in $L^r$ and its norm is less than some constant which does not depend on $r$ and

$$||\Sigma_N^M ||_{L^{p/2}} \leq \sum_{n = N+1}^M \sum_{k=1}^{1+n^2}n^{2s}\left( (u_0^{n,k})^2 +n^{-2}(u_1^{n,k})^2\right) ||e_{n,k}(x)||_{L^p}^2$$
as $\|e_{n,k}\|_{L^p} \leq C \sqrt p$,
$$
\leq C p \sum_{n = N+1}^M \sum_{k=1}^{1+n^2}n^{2s}\left( (u_0^{n,k})^2 +n^{-2}(u_1^{n,k})^2\right)\leq C p M^{2s'} (S_M-S_N)\; .
$$

Therefore,

$$||\frac{1}{1+|T|^{2/r}} (1-\Pi_N) (1-\lap)^{s/2} U(T)(u_0^M,u_1^M)||_{L^q_X,L^r_T,L^p(S^3)} \leq C \sqrt p \sqrt q M^{s'} \sqrt{S_M -S_N}\; .$$
End of the proof of Lemma \ref{probtool}\end{proof}

To prove the first inequality of the proposition, take $M \rightarrow \infty$, $s=0$, and $r=q=p$ in the previous lemma to get : 

$$||\frac{1}{1+|T|^{2/p}} (1-\Pi_N) U(T)(u_0,u_1)||_{L^p_X,L^p_T,L^p(S^3)} \leq C p \sqrt{S^N}\; .$$

Then, 

$$P(||\frac{1}{1+|T|^{2/p}} (1-\Pi_N) U(T)(u_0,u_1)||_{L^p_T,L^p(S^3)}\geq \lambda) $$
$$= P(||\frac{1}{1+|T|^{2/p}} (1-\Pi_N) U(T)(u_0,u_1)||_{L^p_T,L^p(S^3)}^p\geq \lambda^p)$$

$$\leq \lambda^{-p} E(||\frac{1}{1+|T|^{2/p}} (1-\Pi_N) U(T)(u_0,u_1)||_{L^p_T,L^p(S^3)}^p) $$

$$= \lambda^{-p}||\frac{1}{1+|T|^{2/p}} (1-\Pi_N) U(T)(u_0,u_1)||_{L^p_X,L^p_T,L^p(S^3)}^p$$

$$\leq \left( \frac{Cp\sqrt{S^N}}{\lambda}\right)^p\; .$$

To prove the second, use the previous lemma with $r=3$, $p=6$, $q\geq 6$, $s=0$, $M\rightarrow \infty$, $N=0$ to get : 

$$P(||\frac{1}{1+|T|^{2/3}}  U(T)(u_0,u_1)||_{L^3_T,L^6(S^3)}\geq \lambda)\leq \left( \frac{C\sqrt q \sqrt S}{\lambda}\right)^q\; .$$

For $\lambda \geq \sqrt 6 \sqrt S C e$, choose

$$q = \frac{\lambda^2}{C^2 e^2 S}\geq 6$$
to get

$$P(||\frac{1}{1+|T|^{2/3}}  U(T)(u_0,u_1)||_{L^3_T,L^6(S^3)}\geq \lambda)\leq e^{-p}  = e^{-c \lambda^2/S}$$
and for small $\lambda$ use the fact that the probability is bounded by $1$ which is less than $e^6e^{-c \lambda^2/S}$ to get for all $\lambda$

$$P(||\frac{1}{1+|T|^{2/3}}  U(T)(u_0,u_1)||_{L^3_T,L^6(S^3)}\geq \lambda) \leq C e^{-c \lambda^2/S}\; .$$

For the third inequality with $\sigma = 0$, use the previous lemma with $N=0$, $r=1$, $s=\frac{1}{2}$, $p$ some $p_m> 6$, $q\geq p_m$ to get

$$||\frac{1}{1+T^2} \Pi_M U(T)(u_0,u_1) ||_{L^q,L^1_T,L^\infty (S^3)} $$

$$\leq ||\frac{1}{1+T^2} \Pi_M U(T)(u_0,u_1) ||_{L^q,L^1_T,W^{1/2,p} (S^3)} \leq C M^{1/2}\sqrt q \sqrt{S_M} \leq C M^{1/2}\sqrt q \sqrt{S} $$
thanks in particular to the Sobolev embedding $W^{1/2,7}\rightarrow L^\infty$ and then 

$$P(||\frac{1}{1+|T|^{2}}  \Pi_M U(T)(u_0,u_1)||_{L^1_T,L^\infty(S^3)}\geq \lambda)\leq \left( \frac{C\sqrt q M^{1/2}\sqrt S}{\lambda}\right)^q$$
and finally

$$P(||\frac{1}{1+|T|^{2}}  \Pi_M U(T)(u_0,u_1)||_{L^1_T,L^\infty(S^3)}\geq \lambda) \leq C e^{-c \lambda^2/(MS)}\; .$$

For the third inequality with $\sigma > 0$, use the previous lemma with $N=0$, $r=1$, $s=\sigma$, $p $ some $p_m$ larger than $\frac{4}{\sigma}$, $q\geq p$ to get

$$||\frac{1}{1+T^2} \Pi_M U(T)(u_0,u_1) ||_{L^q,L^1_T,L^\infty (S^3)} $$

$$\leq ||\frac{1}{1+T^2} \Pi_M U(T)(u_0,u_1) ||_{L^q,L^1_T,W^{1/2,p} (S^3)} \leq C \sqrt q \sqrt{S_M} \leq C \sqrt q \sqrt{S}\; , $$
then 

$$P(||\frac{1}{1+|T|^{2}}  \Pi_M U(T)(u_0,u_1)||_{L^1_T,L^\infty(S^3)}\geq \lambda)\leq \left( \frac{C\sqrt q \sqrt S}{\lambda}\right)^q\; ,$$
and finally, with an appropriate choice for $q$,

$$P(||\frac{1}{1+|T|^{2}}  \Pi_M U(T)(u_0,u_1)||_{L^1_T,L^\infty(S^3)}\geq \lambda) \leq C e^{-c \lambda^2/S}\; .$$
\end{proof}

Thanks to previous proposition, it is known now that $\frac{1}{1+|T|^{2/3}}U(T)(u_0,u_1)$ belongs almost surely to $L^3_T,L^6(S^3)$. Let us use this property in the local theory.

First, rewrite the equation on the sphere in a more convenient way.

The map $u$ solves 

\begin{equation}\label{eqsph} \left \lbrace{\begin{tabular}{ll}
$\partial_T^2 u +(1-\lap)u + u^3 = 0 $ & \\
$u_{|T=0} = v_0 $ & $\partial_T u_{|T=0} = v_1$\end{tabular}} \right. \end{equation}
if and only if $v = u - U(T)(v_0,v_1)$ solves, with $g(T) = U(T)(v_0,v_1)$ : 

\begin{equation}\label{eqsphde} \left \lbrace{ \begin{tabular}{ll}
$\partial_T^2 v +(1-\lap)v + (g+v)^3 = 0 $ & \\
$v_{|T=0} = 0 $ & $\partial_T v_{|T=0} = 0$\end{tabular}} \right. . \end{equation}

\begin{proposition} There exists $C$ such that for all $\Lambda > 0$, all $T_0\in \R$ and all $g,v_0,v_1$ such that

$$||\frac{1}{1+|T|^{2/3}}g||_{L^3_T,L^6(S^3)}^3 \leq \Lambda \; , \; ||v_0||_{H^1}\leq \Lambda\; , \; ||v_1||_{L^2}\leq \Lambda \; ,$$
the equation

\begin{equation}\label{eqloc} \left \lbrace{\begin{tabular}{ll}
$\partial_T^2 v +(1-\lap)v + (g+v)^3 = 0 $ & \\
$v_{|T=T_0} = v_0 $ & $\partial_T v_{|T=T_0} = v_1$\end{tabular}} \right. \end{equation}
has a unique solution in $\mathcal C ([T_0-T_1,T_0+T_1],H^1)$ with $T_1 = \min (1, \frac{1}{C\Lambda^2(1+T_0^2)^3})$. \end{proposition}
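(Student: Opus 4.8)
\emph{Proposed proof.} The plan is to solve \eqref{eqloc} by the Banach fixed point theorem in a ball of $\mathcal C(I_1,H^1)$, $I_1 := [T_0-T_1,T_0+T_1]$, whose radius is allowed to depend on $\Lambda$ and $T_0$. Writing $\omega := (1-\lap)^{1/2}$, so that $U(\tau)(v_0,v_1) = \cos(\tau\omega)v_0 + \omega^{-1}\sin(\tau\omega)v_1$, Duhamel's formula shows that $v\in\mathcal C(I_1,H^1)$ solves \eqref{eqloc} if and only if it is a fixed point of
$$ \Phi(v)(T) = U(T-T_0)(v_0,v_1) - \int_{T_0}^T \frac{\sin((T-s)\omega)}{\omega}\,(g+v)^3(s)\,ds\; . $$
Since the eigenvalues of $1-\lap$ on $S^3$ are $\geq 1$, the $H^1$ norm is equivalent to $\|\omega\cdot\|_{L^2}$, and $\omega$ commutes with $\cos(\tau\omega)$ and $\sin(\tau\omega)$; hence $\cos(\tau\omega)$ is bounded on $H^1$ and $\omega^{-1}\sin(\tau\omega)$ is bounded from $L^2$ to $H^1$, uniformly in $\tau$. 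This gives $\|U(T-T_0)(v_0,v_1)\|_{H^1}\leq C\Lambda$ for every $T$ and
$$ \|\Phi(v)\|_{\mathcal C(I_1,H^1)} \leq C\Lambda + C\int_{I_1}\|(g+v)^3(s)\|_{L^2(S^3)}\,ds\; . $$

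The heart of the matter is the time integral. Expanding the cube and using the Sobolev embedding $H^1(S^3)\hookrightarrow L^6(S^3)$ together with H\"older in space gives, pointwise in $s$,
$$ \|(g+v)^3(s)\|_{L^2} \leq C\big( \|g(s)\|_{L^6}^3 + \|g(s)\|_{L^6}^2\|v(s)\|_{H^1} + \|g(s)\|_{L^6}\|v(s)\|_{H^1}^2 + \|v(s)\|_{H^1}^3 \big)\; . $$
To integrate the $g$-factors I insert the weight from the hypothesis: for $s\in I_1$ one has $|s|\leq |T_0|+1$ and $T_1\leq 1$, so $(1+|s|^{2/3})^3\leq C(1+T_0^2)$, whence
$$ \int_{I_1}\|g(s)\|_{L^6}^3\,ds \leq C(1+T_0^2)\, \| \tfrac{1}{1+|T|^{2/3}}g \|_{L^3_T,L^6(S^3)}^3 \leq C(1+T_0^2)\Lambda\; , $$
and, by H\"older in time over an interval of length $2T_1$, $\int_{I_1}\|g(s)\|_{L^6}^2\,ds \leq C T_1^{1/3}\big((1+T_0^2)\Lambda\big)^{2/3}$ and $\int_{I_1}\|g(s)\|_{L^6}\,ds \leq C T_1^{2/3}\big((1+T_0^2)\Lambda\big)^{1/3}$. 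Bounding $\|v(s)\|_{H^1}$ by $\|v\|_{\mathcal C(I_1,H^1)}$ and using $|I_1| = 2T_1$ for the remaining terms yields, with the shorthand $\|v\| := \|v\|_{\mathcal C(I_1,H^1)}$,
$$ \|\Phi(v)\|_{\mathcal C(I_1,H^1)} \leq C\Lambda + C(1+T_0^2)\Lambda + C T_1^{1/3}\big((1+T_0^2)\Lambda\big)^{2/3}\|v\| + C T_1^{2/3}\big((1+T_0^2)\Lambda\big)^{1/3}\|v\|^2 + C T_1\|v\|^3\; . $$

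I would then close the argument in the ball $B$ of radius $R$ comparable to $(1+T_0^2)\Lambda$, which is the size of the affine part of the solution ($U(T-T_0)(v_0,v_1)$ minus the Duhamel term generated by $g^3$): the first two terms above are $\leq R/2$ by the choice of $R$, and a direct inspection of the powers of $T_1$, $(1+T_0^2)$ and $\Lambda$ shows that the three remaining terms are $\leq R/2$ as soon as $T_1 = \min\big(1, (C\Lambda^2(1+T_0^2)^3)^{-1}\big)$ with $C$ large enough, so $\Phi$ maps $B$ into itself. For the contraction one writes $(g+v)^3-(g+w)^3 = (v-w)\big((g+v)^2+(g+v)(g+w)+(g+w)^2\big)$ and repeats the same H\"older--Sobolev bookkeeping to obtain
$$ \|\Phi(v)-\Phi(w)\|_{\mathcal C(I_1,H^1)} \leq \tfrac12\|v-w\|_{\mathcal C(I_1,H^1)} $$
on $B$ for the same $T_1$ (enlarging $C$ once more if necessary). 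The Banach fixed point theorem then gives a unique solution in $B$, and uniqueness in all of $\mathcal C(I_1,H^1)$ follows by applying the same Lipschitz estimate to the difference of two solutions on sufficiently short subintervals of $I_1$ and iterating. The step I expect to be the main obstacle is the bookkeeping of the previous paragraph: one must distribute the weight $1+|T|^{2/3}$ carefully, accept the resulting factor $1+T_0^2$ — which is exactly what forces $T_1$ to shrink as $|T_0|\to\infty$ — and then verify that the prescribed $T_1$ beats both the self-mapping and the contraction constants simultaneously; the spatial estimates themselves are routine.
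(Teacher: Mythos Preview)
Your proof is correct and follows essentially the same route as the paper: Duhamel formulation, the energy bound $\|\omega^{-1}\sin(\tau\omega)\,\cdot\,\|_{H^1}\leq\|\cdot\|_{L^2}$, Sobolev embedding $H^1(S^3)\hookrightarrow L^6(S^3)$, insertion of the weight $(1+|T|^{2/3})$ to exploit the hypothesis on $g$, and a contraction mapping in a ball of radius comparable to $(1+T_0^2)\Lambda$. The only cosmetic difference is that the paper collapses the cubic via $\|g+v\|_{L^6}^3\leq C(\|g\|_{L^6}^3+\|v\|_{H^1}^3)$ rather than expanding into four cross terms as you do, and it does not spell out the extension of uniqueness from the ball to all of $\mathcal C(I_1,H^1)$; neither point changes the argument.
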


\begin{proof}Let

$$\phi_{g,v_0,v_1} (v)(T) = S(T-T_0)(v_0,v_1) - \int_{T_0}^{T}\frac{\sin((T-\tau)\sqrt{1-\lap})}{\sqrt{1-\lap}}\left( (g+v)^3(\tau)\right) d\tau\; .$$

The equation \eqref{eqloc} can be rewritten as the fixed point problem $\phi_{g,v_0,v_1} (v)=v$. The map $\phi_{g,v_0,v_1}$ satisfies : 

$$||\phi_{g,v_0,v_1}(v)(T)||_{H^1} \leq  C\Lambda + \int_{T_0}^T ||(g+v)^3||_{L^2}$$

$$||(g+v)^3||_{L^2} = ||g+v||_{L^6}^3 \leq C (||g||_{L^6}^3+ ||v||_{L^6}^3 ) \leq C (||g||_{L^6}^3 + ||v||_{H^1}^3)$$

$$||\phi_{g,v_0,v_1}(v)(T)||_{H^1} \leq  C\left( \Lambda + (1+|T-T_0|^2+|T_0|^2)||\frac{1}{1+|\tau|^{2/3}}g||_{L^3_\tau,L^6(S^3)}+ \int_{T_0}^T ||v(\tau)||_{H^1}^3 d\tau \right) \; .$$

With $T \in [T_0-T_1,T_0+T_1]$,

$$||\phi_{g,v_0,v_1}(v)||_{L^\infty_T,H^1} \leq C\left( (2+T_0^2+T_1^2)\Lambda + |T_1|\; ||v||_{L^\infty_T,H^1}\right)\; .$$

If $T_1 \leq \min ( 1, \frac{1}{C^3\Lambda^2 (4+T_0^2)^3}$ and $||v||_{L^\infty,H^1} \leq C \Lambda (4+T_0^2)$, then 

$$||\phi_{g,v_0,v_1}(v)||_{L^\infty_T,H^1} \leq C(4+T_0^2) \Lambda$$
so the ball of radius $C(4+T_0^2)\Lambda$ in $\mathcal C ([T_0-T_1,T_0+T_1],H^1(S^3))$ is stable under $\phi_{g,v_0,v_1}$.

What is more, in this ball

$$||\phi_{g,v_0,v_1}(v)-\phi_{g,v_0,v_1}(w)||_{L^\infty_T,H^1} $$

$$\leq C||v-w||_{L^\infty_T,H^1} \left( (2+T_0^{2})||\frac{1}{1+|T|^{2/3}}g||_{L^3_T,L^6 }^2||1_{[T_0-T_1,T_0+T_1]}||_{L^{3}_T} + T_1 (||v||_{L^\infty,H^1}^2 + ||w||_{L^\infty_T,H^1}^2)\right)$$

$$\leq  C||v-w||_{L^\infty_T,H^1} \left( T_1^{1/3} (2+T_0^2) \Lambda^{2/3} + T_1 \Lambda^2 (4+T_0^2)^2\right)\; .$$

Therefore with $C$ large enough and $T_1 \leq \frac{1}{C \Lambda^2 (1+T_0^2)}$, the fixed point theorem applies which concludes the proof.\end{proof}

Thanks to the local Cauchy theory, one can see that the solution of \eqref{eqsphde} can be extended for bigger times as long as the energy : 

$$\mathcal E (T) = \int v(1-\lap)v + \int (\partial_T v)^2 + \frac{1}{2}\int v^4$$
is finite.

To bound this quantity, different arguments are used depending on whether the initial data have been built with $\sigma = 0$ or $\sigma > 0$.

\subsection{Global solutions on the sphere : case 1}

\begin{theorem}\label{caseone}Suppose that $\sigma > 0$. There exists a set $E_\sigma \subseteq H^\sigma \times H^{\sigma -1}$ such that the probability

$$P((u_0,u_1) \in E_\sigma) =1$$
and that for all $v_0,v_1 \in E_\sigma$, the Cauchy problem \eqref{eqsph} with initial datum $v_0,v_1$ is globally well-posed in $U(T)(v_0,v_1)+ \mathcal C (\R, H^1)$.\end{theorem}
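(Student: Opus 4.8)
The plan is to use the Bourgain/Burq--Tzvetkov globalisation scheme: combine the local existence proposition above with an \emph{a priori} bound on the energy of the Duhamel part $v=u-g$, where $g(T)=U(T)(v_0,v_1)$. The place where $\sigma>0$ is used decisively is the third bullet of Proposition~\ref{proba}: for $\sigma>0$ the deviation estimate for $\|(1+T^2)^{-1}\Pi_M U(T)(u_0,u_1)\|_{L^1_TL^\infty}$ is \emph{uniform in $M$}, so by Fatou the full linear flow satisfies $(1+T^2)^{-1}g\in L^1_TL^\infty(S^3)$ almost surely — an $L^\infty_x$ control that is unavailable for $\sigma=0$.

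First I would fix the full-measure set
$$E_\sigma:=\Big\{(v_0,v_1)\in H^\sigma\times H^{\sigma-1}\ :\ \big\|(1+|T|^{2/3})^{-1}U(T)(v_0,v_1)\big\|_{L^3_TL^6}<\infty\ \text{and}\ \big\|(1+T^2)^{-1}U(T)(v_0,v_1)\big\|_{L^1_TL^\infty}<\infty\Big\}.$$
By the second bullet of Proposition~\ref{proba} and by the third bullet in its $\sigma>0$ form (together with Fatou's lemma in $M\to\infty$), $P((u_0,u_1)\in E_\sigma)=1$. For $(v_0,v_1)\in E_\sigma$, writing $h(\tau):=\|g(\tau)\|_{L^\infty}+\|g(\tau)\|_{L^6}^2+\|g(\tau)\|_{L^6}^3$, the inclusions $g\in L^3_{loc}L^6\cap L^1_{loc}L^\infty$ in the time variable (and $L^3_{loc}\subset L^2_{loc}$ on bounded intervals) give $\int_{-T}^{T}h(\tau)\,d\tau<\infty$ for every finite $T$.

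Next I would run the energy estimate. With $\mathcal{E}(T)=\int v(1-\lap)v+\int(\partial_Tv)^2+\tfrac12\int v^4$ one has $\|v\|_{H^1}^2+\|\partial_Tv\|_{L^2}^2\le\mathcal{E}$ and $\|v\|_{L^4}^4\le2\mathcal{E}$, since the eigenvalues of $1-\lap_{S^3}$ are $\ge1$. Pairing \eqref{eqsphde} with $2\partial_Tv$ and expanding $(g+v)^3=v^3+3v^2g+3vg^2+g^3$, the term $v^3$ is a total derivative absorbed into $\mathcal{E}$, so
$$\frac{d}{dT}\mathcal{E}(T)=-2\int\big(3v^2g+3vg^2+g^3\big)\partial_Tv .$$
Using $H^1(S^3)\hookrightarrow L^6(S^3)$ and Hölder, I would bound $|\int v^2g\,\partial_Tv|\le\|g\|_{L^\infty}\|v\|_{L^4}^2\|\partial_Tv\|_{L^2}\le C\|g\|_{L^\infty}\mathcal{E}$, then $|\int vg^2\,\partial_Tv|\le\|v\|_{L^6}\|g\|_{L^6}^2\|\partial_Tv\|_{L^2}\le C\|g\|_{L^6}^2\mathcal{E}$, and $|\int g^3\,\partial_Tv|\le\|g\|_{L^6}^3\|\partial_Tv\|_{L^2}\le C\|g\|_{L^6}^3\mathcal{E}^{1/2}$, whence $\frac{d}{dT}\mathcal{E}(T)\le C\,h(T)\,(\mathcal{E}(T)+1)$. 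Since $v_{|T=0}=\partial_Tv_{|T=0}=0$ gives $\mathcal{E}(0)=0$, Gronwall yields $\mathcal{E}(T)+1\le\exp\!\big(C\int_0^{|T|}h\big)=:A(T)<\infty$, with $A$ locally bounded by the previous step. Finally I would globalise by continuation: if $[0,T^*)$, $T^*<\infty$, were the maximal forward interval of existence, then for $T_0\in[0,T^*)$ the data $(v(T_0),\partial_Tv(T_0))$ satisfy $\|v(T_0)\|_{H^1},\|\partial_Tv(T_0)\|_{L^2}\le A(T^*)^{1/2}$, while $\|(1+|T|^{2/3})^{-1}g\|_{L^3_TL^6}^3$ is a fixed finite number; applying the local proposition with $\Lambda$ the maximum of these yields a local time $T_1\ge\min\!\big(1,(C\Lambda^2(1+(T^*)^2)^3)^{-1}\big)>0$ uniform over $T_0\in[0,T^*)$, so starting just below $T^*$ extends $v$ past $T^*$ — contradiction; the backward direction is identical. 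This gives $v\in\mathcal{C}(\R,H^1)$ (with $\partial_Tv\in\mathcal{C}(\R,L^2)$) globally, and uniqueness follows by propagating the local uniqueness of the proposition along $\R$ by connectedness. Hence \eqref{eqsph} is globally well-posed in $U(T)(v_0,v_1)+\mathcal{C}(\R,H^1)$ for every $(v_0,v_1)\in E_\sigma$.

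\medskip
\noindent\emph{Main obstacle.} The only delicate point is the energy estimate: the cross term $\int v^2g\,\partial_Tv$ does not close with merely $L^6_x$ control of $g$ (that would cost a power $\mathcal{E}^{3/2}$, which Gronwall cannot absorb), so one genuinely needs the $L^\infty_x$ bound on $g$. This is exactly the $M$-uniformity in the third bullet of Proposition~\ref{proba}, valid only for $\sigma>0$; its failure for $\sigma=0$ is what forces the separate argument of the next subsection.
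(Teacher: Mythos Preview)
Your proposal is correct and follows essentially the same route as the paper: define $E_\sigma$ via the two almost-sure integrability conditions on the linear flow, differentiate the energy of $v$, use $L^\infty_x$ control of $g$ for the dangerous $v^2g$ term (the key point where $\sigma>0$ enters), and close by Gronwall. The only cosmetic differences are that the paper writes the energy as $\mathcal{E}^2$ (so Gronwall is applied to the square root directly) and leaves the continuation argument implicit, whereas you spell it out.
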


\begin{proof} The third inequality of Proposition \ref{proba} ensures that, when $\sigma > 0$, $\frac{1}{1+T^2}U(T)(u_0,u_1)$ belongs almost surely to $L^1_T, L^\infty (S^3)$ and $\frac{1}{1+|T|^{2/3}} U(T) (u_0,u_1)$ belongs almost surely to $L^3_T,L^6 (S^3)$. Therefore, take for $E_\sigma$ the set of initial data which satisfy 

$$||\frac{1}{1+T^2}U(T)(v_0,v_1)||_{L^1_T,L^\infty (S^3)} < \infty \; ,$$

$$||\frac{1}{1+|T|^{2/3}} U(T) (v_0,v_1)||_{L^3_T,L^6(S^3)} < \infty \; .$$

For $v_0,v_1 \in E_\sigma$, call $g(T) = U(T)(v_0,v_1)$ and let $v$ be the local solution of 

$$\partial_T^2 v +(1-\lap) v + (g+v)^3 = 0 $$
with initial datum $0,0$.

According to the local Cauchy theory, the solution $v$ exists as long as 

$$\int (\partial_T v)^2 + \int v(1-\lap) v $$
is finite.

Take 

$$\mathcal E^2 (T)= \int (\partial_T v)^2 + \int v(1-\lap ) v + \frac{1}{2} \int v^4 $$
and differentiate this quantity with respect to $T$.

$$\left( \partial_T \mathcal E \right) \mathcal E = \int \partial_T v \partial_T^2 v + \int \partial_T v (1-\lap) v + \int \partial_T v v^3$$

$$ = \int (\partial_T v)\left( v^3 - (g+v)^3) \right) \; .$$

Hence,

$$\partial_T \mathcal E \leq ||v^3 - (g+ v)^3||_{L^2} \leq C \left( ||g(T)^3||_{L^2} + ||g^2v(T)||_{L^2} + ||gv^2||_{L^2} \right)$$

$$|\partial_T \mathcal E| \leq C \left( ||g(T)||_{L^6}^3 + ||g||_{L^6}^2||v||_{L^6} + ||g||_{L^\infty} ||v||_{L^4}^2\right) $$

$$|\partial_T \mathcal E| \leq C \left( ||g(T)||_{L^6}^3 + ||g||_{L^6}^2\mathcal E  + ||g||_{L^\infty} \mathcal E \right) $$
thanks to Sobolev embedding $H^1 \rightarrow L^6$, and applying Gronwall lemma : 

$$\mathcal E (T) \leq C \int_{0}^T ||g(\tau)||_{L^6}^3 d\tau e^{c\int_{0}^T (||g(\tau)||_{L^6}^2 + ||g(\tau)||_{L^\infty}) d\tau} < \infty \; ,$$
the energy is bounded, which concludes the proof of Theorem \ref{caseone}.\end{proof}

\subsection{Global solutions on the sphere : case 2}

\begin{theorem}\label{casetwo}Suppose that $\sigma = 0$. There exists a set $E \subseteq L^2 \times H^{ -1}$ such that the probability

$$P((u_0,u_1) \in E) =1$$
and that for all $v_0,v_1 \in E$, the Cauchy problem \eqref{eqsph} with initial datum $v_0,v_1$ is globally well-posed in $U(T)(v_0,v_1)+ \mathcal C (\R, H^1)$.\end{theorem}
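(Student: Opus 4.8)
The plan is to follow the scheme of \cite{twon} and of Theorem~\ref{caseone}: writing the solution as $u=U(T)(v_0,v_1)+v$ with $v$ solving \eqref{eqsphde}, the local Cauchy theory shows $v$ is global as soon as its energy
$$\mathcal E^2(T)=\int(\partial_T v)^2+\int v(1-\lap)v+\tfrac{1}{2}\int v^4$$
stays finite on every interval $[0,T^*]$, so the whole problem reduces to an a priori bound on $\mathcal E$. First I would fix the exceptional set $E$: using Proposition~\ref{proba} and the Borel--Cantelli lemma, $E$ is the set of $(v_0,v_1)$ for which, with $g(T)=U(T)(v_0,v_1)$, one has $\|\tfrac{1}{1+|T|^{2/3}}g\|_{L^3_TL^6}<\infty$ and, for every $M\geq1$, both $\|\tfrac{1}{1+T^2}\Pi_M g\|_{L^1_TL^\infty}$ is finite (with quantitative size $O(M^{1/2})$ up to logarithms, from the third inequality with $s=1$) and $\|\tfrac{1}{1+|T|^{2/p_m}}(1-\Pi_M)g\|_{L^{p_m}(\R\times S^3)}\lesssim p_m\sqrt{S^M}$ for a suitably chosen index $m=m(M)$; each of these is an almost sure event, so $P(E)=1$.

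Next I would differentiate $\mathcal E^2$ as in Theorem~\ref{caseone}: the equation gives $(\partial_T\mathcal E)\mathcal E=\int\partial_T v\,(v^3-(g+v)^3)=-\int\partial_T v\,(3gv^2+3g^2v+g^3)$, so
$$|\partial_T\mathcal E|\le C\big(\|g\,v^2\|_{L^2}+\|g\|_{L^6}^2\|v\|_{L^6}+\|g\|_{L^6}^3\big).$$
The last two terms are harmless, since $\|g\|_{L^6}^2\|v\|_{L^6}\le C\|g\|_{L^6}^2\mathcal E$ by $H^1\hookrightarrow L^6$ and $\|g\|_{L^6}^2,\|g\|_{L^6}^3$ are integrable on $[0,T^*]$ because $\|\tfrac{1}{1+|T|^{2/3}}g\|_{L^3_TL^6}<\infty$. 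For the dangerous term I would split $g=\Pi_M g+(1-\Pi_M)g$: the low part is $\le\|\Pi_M g\|_{L^\infty}\|v\|_{L^4}^2\le\sqrt{2}\,\|\Pi_M g\|_{L^\infty}\mathcal E$ (using $\tfrac{1}{2}\int v^4\le\mathcal E^2$), and for $p_m\geq6$ the high part is $\le\|(1-\Pi_M)g\|_{L^{p_m}}\|v\|^2_{L^{4p_m/(p_m-2)}}\le C\|(1-\Pi_M)g\|_{L^{p_m}}\mathcal E^{1+6/p_m}$, by interpolating $L^{4p_m/(p_m-2)}$ between $L^4$ and $L^6$ and using $H^1\hookrightarrow L^6$.

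Then I would run a continuity argument on a fixed $[0,T^*]$. On a maximal subinterval where $\mathcal E\le R$ (with $R$ to be chosen), $\mathcal E^{1+6/p_m}\le R^{6/p_m}\mathcal E$, so grouping coefficients and integrating the differential inequality (with $\mathcal E(0)=0$) gives
$$\mathcal E(T)\le Q_1\exp\!\big(\mathcal A_M+Q_2+R^{6/p_m}\mathcal B_{M,p_m}\big),$$
with $Q_1=C\!\int_0^{T^*}\!\|g\|_{L^6}^3$ and $Q_2=C\!\int_0^{T^*}\!\|g\|_{L^6}^2$ finite and independent of $M$, $\mathcal A_M=C\!\int_0^{T^*}\!\|\Pi_M g\|_{L^\infty}$ finite for each $M$, and $\mathcal B_{M,p_m}=C\!\int_0^{T^*}\!\|(1-\Pi_M)g\|_{L^{p_m}}\lesssim p_m\sqrt{S^M}$ (all constants depending on $T^*$ and on the realization). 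One chooses $M$ large, which fixes $\mathcal A_M$ and the value $R=2Q_1\exp(\mathcal A_M+Q_2+1)$; then an index $m$ with $p_m\ge 6\log R$, so that $R^{6/p_m}\le e$; and one checks that for $M$ large enough $R^{6/p_m}\mathcal B_{M,p_m}\le1$, whence $\mathcal E\le R/2$ on the subinterval and the bootstrap closes. Thus $\mathcal E\le R$ on $[0,T^*]$; since $T^*$ is arbitrary, $v\in\mathcal C(\R,H^1)$, uniqueness being inherited from the local theory, and $E$ is the set sought.

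The main obstacle is this last balancing. Unlike on the torus, the basis is only bounded in $L^{p_m}$ by $C\sqrt{p_m}$, so $\mathcal B_{M,p_m}$ carries an extra factor $p_m$; and, more seriously, the $L^\infty$ control of the low frequencies costs a power $M^{1/2}$ in Proposition~\ref{proba}, so $\mathcal A_M$, and with it $e^{\mathcal A_M}$, grows with $M$, while the high-frequency gain $\sqrt{S^M}$ may tend to $0$ arbitrarily slowly. Making the low-frequency exponential growth and the high-frequency decay fit together --- already the heart of \cite{twon} --- while simultaneously keeping the $\sqrt{p_m}$ loss under control is the one genuinely delicate point; everything else is a routine adaptation of Theorem~\ref{caseone} and of the torus argument.
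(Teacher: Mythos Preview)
Your differential inequality, the high/low frequency split $g=\Pi_Mg+(1-\Pi_M)g$, and the interpolation $\|v^2\|_{L^{p'}}\le C\mathcal E^{1+6/p_m}$ are exactly those of the paper's Proposition~\ref{guex}. The gap is precisely the balancing you flag at the end, and it is not merely ``delicate'': with your parametrisation it genuinely fails. You fix $R=2Q_1\exp(\mathcal A_M+Q_2+1)$ \emph{before} choosing $p_m$, so the closing condition becomes $\mathcal B_{M,p_m}\le e^{-1}$, a bound that does \emph{not} scale with $p_m$. Since almost surely $\mathcal B_{M,p_m}\sim p_m\sqrt{S^M}$, this forces $p_m\lesssim (S^M)^{-1/2}$, while the constraint $p_m\ge 6\log R$ together with $\mathcal A_M\sim M^{1/2}$ forces $p_m\gtrsim M^{1/2}$. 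The two are compatible only if $M\,S^M\lesssim 1$, which is false whenever the tail $S^M$ decays slowly (e.g.\ $S^M\sim 1/\log M$). So as written the bootstrap does not close.

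The paper avoids this entirely by two changes of viewpoint. First, the frequency cut $N=N(T_0)$ is chosen \emph{once}, depending only on $T_0$, so that $\sqrt{S^N}\le (54eC_1C(T_0))^{-1}$; it is never sent to infinity, so $\|\Pi_N g\|_{L^1_TL^\infty}$ is just a fixed (a.s.\ finite) random variable and there is no growing $\mathcal A_M$ to fight. Second, the bootstrap threshold is taken to be $e^{p/6}$, scaling with $p$: under $\mathcal E\le e^{p/6}$ one has $\mathcal E^{1+6/p}\le e\,\mathcal E$, and Gronwall gives $\mathcal E\le e^{p/9}$ provided $C(T_0)\|(1-\Pi_N)g\|_{L^p}\le p/54$. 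The point is that this last condition is \emph{linear in $p$}, which exactly cancels the extra $p$ in the probabilistic estimate $P(\|(1-\Pi_N)g\|_{L^p}>\lambda)\le (Cp\sqrt{S^N}/\lambda)^p$: with $\lambda=p/(54C(T_0))$ one gets $\mu(I_\theta^c)\le (54C_1C(T_0)\sqrt{S^N})^p\le e^{-p}$. The full-measure set is then $E(T_0)=\bigcup_\theta J_\theta$ (existence of \emph{some} $p$ for which all four bounds hold), not an intersection over all $M$ with quantitative control. Your plan becomes correct if you make these two adjustments: freeze $N$ and let the threshold grow with $p$.
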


\begin{proposition}\label{guex}Let $T_0 > 0$. There exists $C(T_0)$ such that for all $\theta > 0 $ and $p=\frac{6}{\theta}$, supposing that $g= U(T)(v_0,v_1)$ can be written $g=g_1+g_2$ with 
\small
$$C(T_0)||\frac{1}{1+T^{2/3}}g||_{L^3_T, L^6_x}^3 \leq e^{p/18}\; \mbox{ and } \; C(T_0)(||\frac{1}{1+T^{2/3}}g||_{L^3_T,L^6_x}^2+ ||\frac{1}{1+T^2}g_1||_{L^2_T, L^\infty_x}+||\frac{1}{1+T^{2/p}}g_2||_{L^p_T,x}) \leq \frac{p}{18}$$
\normalsize then the equation \eqref{eqloc} has a unique solution onto $\mathcal C ([-T_0,T_0], H^1)$. The constant $C$ depends on $T_0$ but is independent of $ \theta$.\end{proposition}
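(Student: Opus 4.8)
The plan is to run the local Cauchy theory of the preceding proposition together with an a~priori bound on the energy of the nonlinear part $v=u-g$; the only delicate point is the term $g_2v^2$, which — $g_2$ being controlled only in $L^p_{T,x}$ with $p$ finite — produces a mildly super‑linear contribution to the energy inequality, so that a plain Gronwall bound does not suffice.

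Writing the solution as $u=g+v$, the problem becomes that of solving $\partial_T^2 v+(1-\lap)v+(g+v)^3=0$ on $[-T_0,T_0]$ with the Cauchy data prescribed there (for definiteness take vanishing data at $T=0$; the general case is identical as long as the data has $H^1\times L^2$ norm $\lesssim e^{p/14}$). By the continuation criterion it is enough to keep
$$\mathcal{E}(T)^2=\int(\partial_Tv)^2+\int v(1-\lap)v+\tfrac12\int v^4$$
finite on $[-T_0,T_0]$; I treat $T\ge 0$, the case $T\le 0$ being symmetric. Differentiating and using the equation, $\tfrac12\partial_T(\mathcal{E}^2)=-\int\partial_Tv\big((g+v)^3-v^3\big)$, whence $|\partial_T\mathcal{E}|\le\|g^3\|_{L^2}+3\|g^2v\|_{L^2}+3\|g_1v^2\|_{L^2}+3\|g_2v^2\|_{L^2}$. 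Since $\|v\|_{H^1}\le\mathcal{E}$ (hence $\|v\|_{L^6}\le C\mathcal{E}$ by Sobolev on $S^3$) and $\|v\|_{L^4}^2\le\sqrt2\,\mathcal{E}$, one gets $\|g^3\|_{L^2}=\|g\|_{L^6}^3$, $\|g^2v\|_{L^2}\le C\|g\|_{L^6}^2\mathcal{E}$, $\|g_1v^2\|_{L^2}\le C\|g_1\|_{L^\infty}\mathcal{E}$, and, by Hölder with exponents $(p,2p/(p-2))$ and interpolation between $L^4$ and $L^6$ (licit since $4p/(p-2)\in[4,6]$ for $p\ge6$),
$$\|g_2v^2\|_{L^2}\le\|g_2\|_{L^p}\,\|v\|_{L^{4p/(p-2)}}^2\le C\|g_2\|_{L^p}\big(\|v\|_{L^4}^2\big)^{1-6/p}\big(\|v\|_{L^6}^2\big)^{6/p}\le C\|g_2\|_{L^p}\,\mathcal{E}^{1+6/p}.$$

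Thus, with $\delta=6/p$, $\partial_T\mathcal{E}\le a+b\mathcal{E}+c\,\mathcal{E}^{1+\delta}$ with $a=C\|g\|_{L^6}^3$, $b=C(\|g\|_{L^6}^2+\|g_1\|_{L^\infty})$, $c=C\|g_2\|_{L^p}$. Estimating the time weights $(1+T^{2/3})^3$, $(1+T^2)$, $(1+T^{2/p})$ on $[-T_0,T_0]$ and the Hölder‑in‑time constants, and absorbing them together with the numerical constants into the constant $C(T_0)$ of the statement, the two hypotheses say exactly that $\mathcal{A}:=\int_{-T_0}^{T_0}a\le e^{p/18}$, $\beta:=\int_{-T_0}^{T_0}b\le p/18$ and $\gamma:=\int_{-T_0}^{T_0}c\le p/18$. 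Suppose the maximal forward existence time $T_+$ satisfied $T_+\le T_0$, so $\mathcal{E}(T)\to\infty$ as $T\to T_+^-$. Put $R_0=e^{p/14}$ and let $T_1=\sup\{T<T_+:\mathcal{E}(T)=R_0\}$, so that $\mathcal{E}>R_0$ on $(T_1,T_+)$; there, dividing the energy inequality by $\mathcal{E}^{1+\delta}$ and using $\mathcal{E}>R_0$ gives $\partial_T(\mathcal{E}^{-\delta})\ge-\delta\big(R_0^{-1-\delta}a+R_0^{-\delta}b+c\big)$, and integrating from $T_1$ up to $T_+^-$ (where $\mathcal{E}^{-\delta}\to0$) and dividing by $\delta R_0^{-\delta}$,
$$1\le \delta R_0^{-1}\mathcal{A}+\delta\beta+\delta\gamma R_0^{\delta}\le \tfrac6p\,e^{-p/63}+\tfrac13+\tfrac13 e^{3/7}.$$
As $p\to\infty$ the first term tends to $0$ and $\tfrac13+\tfrac13 e^{3/7}<1$, so for $p$ beyond some $p_0(T_0)$ this is impossible; hence $\mathcal{E}$ stays finite and the local solution $v$ extends to all of $[-T_0,T_0]$, where it lies in $\mathcal{C}([-T_0,T_0],H^1)$.

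When $p\le p_0(T_0)$ the hypotheses force $\|(1+T^{2/3})^{-1}g\|_{L^3_TL^6}$ and the norms of $g_1,g_2$ to be arbitrarily small (take $C(T_0)$ large), and a direct contraction for the Duhamel map on $\mathcal{C}([-T_0,T_0],H^1)$ — as in the previous proof but with this global smallness — produces the solution. Uniqueness in $\mathcal{C}([-T_0,T_0],H^1)$ follows by patching the local uniqueness of the preceding proposition along the interval. The crux — and the reason three separate smallness requirements appear — is the super‑linear term $c\,\mathcal{E}^{1+6/p}$: it rules out an ordinary Gronwall estimate, and the hypotheses are tuned so that the $\mathcal{E}^{-\delta}$ "barrier" computation closes, namely $\mathcal{A}\le e^{p/18}$ keeps $\delta R_0^{-1}\mathcal{A}$ negligible for an admissible $R_0<2^{p/6}$, while $\beta,\gamma\le p/18$ keep $\delta\beta$ and $\delta\gamma R_0^{\delta}$ each near $1/3$.
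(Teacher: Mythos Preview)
Your proof is correct, but it takes a more elaborate route than the paper's.

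Both arguments derive the same energy differential inequality
\[
\partial_T\mathcal{E}\ \le\ a(T)+b(T)\,\mathcal{E}+c(T)\,\mathcal{E}^{1+\theta},\qquad \theta=\tfrac6p,
\]
with $a=C\|g\|_{L^6}^3$, $b=C(\|g\|_{L^6}^2+\|g_1\|_{L^\infty})$, $c=C\|g_2\|_{L^p}$, and the absorption of the time weights into $C(T_0)$ is identical. The difference is in how the superlinear term is closed. The paper makes the single observation that under the bootstrap hypothesis $\mathcal{E}\le e^{p/6}=e^{1/\theta}$ one has $\mathcal{E}^{\theta}\le e$, so $c\,\mathcal{E}^{1+\theta}\le ec\,\mathcal{E}$ and the inequality becomes \emph{linear}; then ordinary Gronwall together with the two hypotheses gives $\mathcal{E}\le e^{p/18}\cdot e^{p/18}=e^{p/9}<e^{p/6}$, and continuity closes the bootstrap on all of $[-T_0,T_0]$, for every $\theta>0$ at once. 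You instead integrate the superlinear ODE directly via the substitution $\mathcal{E}^{-\delta}$, obtain the barrier inequality $1\le \delta R_0^{-1}\mathcal{A}+\delta\beta+\delta\gamma R_0^{\delta}$, and reach a contradiction only for $p$ beyond an absolute threshold $p_0$, which forces a separate small-$p$ treatment by enlarging $C(T_0)$. That two-case split is unnecessary: the paper's linearization trick ($\mathcal{E}^\theta\le e$ under the barrier $e^{1/\theta}$) is precisely what makes plain Gronwall suffice and is the intended point of the choice of thresholds $e^{p/18}$ and $p/18$.
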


\begin{proof}We consider the energy given in the previous subsection : 

$$\mathcal E(v)^2 = \int v(1-\lap )v +\int (\partial_T v)^2 +\frac{1}{2} \int v^4 \; .$$

If $v$ is the local solution of \eqref{eqloc}, on its interval of definition, it comes : 

$$\partial_T \mathcal E (v) \mathcal E(v) = \int (\partial_T v) \left( g^3 +3g^2 v + 3(g_1+g_2)v^2\right)$$
and by using H\"older inequalities, in particular on the last term, with $1/p'+1/p = 1/2$, 
$$
\Big| \int (\partial_T v) g_2v^2 \leq \|\partial_T v\|_{L^2} ||g_2(T)||_{L^p}||v^2||_{L^{p'}}
$$
we get
$$\partial_T \mathcal E (v) \leq ||g(T)||_{L^6_x}^3 + 3 ||g(T)||_{L^6_x}^2 ||v||_{L^6} + 3 ||g_1(T)||_{L^\infty}||v^2||_{L^2}+ 3||g_2(T)||_{L^p}||v^2||_{L^{p'}}\; .
$$

Then, by using Sobolev embedding $H^1 \subset L^6$ and because the $H^1$ norm of $v$ is controlled by the energy

$$||v||_{L^6} \leq C ||v||_{H^1} \leq C \mathcal E$$
and because the $L^4$ norm to the square is controlled by the energy :
$$||v^2||_{L^2} = ||v||_{L^4}^2 \leq C \mathcal E $$
finally, as $\theta = \frac{6}{p}$,
$$
\frac{1}{2p'} = \frac{1}{4}-\frac{1}{2p} = \frac{1}{4} - \frac{\theta}{12} = \frac{1-\theta}{4} + \frac{\theta}{6}
$$
we get
$$||v^2||_{L^{p'}} = ||v||_{L^{2p'}}^2 \leq  (||v||_{L^4}^{1-\theta} ||v||_{L^6}^\theta)^2 \leq C \mathcal E^{1-\theta} ||v||_{H^1}^{2\theta} \leq C \mathcal E(v)^{1+\theta}\; .$$

Thus,

$$\partial_T \mathcal E (v) \leq ||g(T)||_{L^6_x}^3 + C\left( (||g(T)||_{L^6_x}^2  +  ||g_1(T)||_{L^\infty})\mathcal E+ ||g_2(T)||_{L^p}\mathcal E^{1+\theta} \right) \; .$$

As $\mathcal E$ is continuous and initially $0$, suppose that until time $T_1$ it is less than $e^{p/6}=e^{1/\theta}$, then until time $T_1$, it appears that : 

$$\partial_T \mathcal E(v) \leq ||g(T)||_{L^6_x}^3 + C\left( (||g(T)||_{L^6_x}^2  +  ||g_1(T)||_{L^\infty})+ ||g_2(T)||_{L^p}\right) \mathcal E \; .$$

Using Gronwall lemma,
\small
\begin{eqnarray*}
\E(v)  & \leq C(1+T_0^2)||g||_{L^3_T,L^6_x}^3 e^{C((1+T_0^2)^{2/3}||\frac{1}{1+T^{2/3}}g||_{L^3_T,L^6_x}^2+(1+T_0^2)||\frac{1}{1+T^2}g_1||_{L^1_T,L^\infty}+(1+T_0^2)^{(1+p)/2p}||\frac{1}{1+T^{2/p}}g_2||_{L^p_{T,x}})}\\
 & \leq C(1+T_0^2)||g||_{L^3_T,L^6_x}^3 e^{C(1+T_0^2) \left( ||\frac{1}{1+T^{2/3}}g||_{L^3_T,L^6_x}^2 + ||\frac{1}{1+T^2}g_1||_{L^1_T,L^\infty}+||\frac{1}{1+T^{2/p}}g_2||_{L^p_{T,x}}\right)} \; .
\end{eqnarray*}
\normalsize

Choosing $C(T_0) = C(1+T_0^2)$, by hypothesis : 

$$\E (v) \leq e^{p/9}<e^{p/6} \; .$$

Suppose that the solution $v$ is not well posed on $[-T_0,T_0]$, then as $\mathcal E (v)$ controls the $H^1$ norm of $v$ and the $L^2$ norm of $\partial_T v$, there exists a time $T_1$ such that for all time $T$ smaller than $T_1$, the energy $\mathcal E(v)$ is smaller than $e^{p/6}$ and a $\epsilon$ such that for all $T\in ]T_1,T_1+\epsilon[$, $\mathcal E (v) > e^{p/6}$. Then, thanks to the previous computation, until $T_1$, the energy is strictly less than $e^{p/6}$ and as is it continuous, there exists $\epsilon'$ such that the energy remains smaller than $e^{p/6}$ until $T_1+\epsilon'$ with contradicts the hypothesis.

Hence, the equation \eqref{eqsphde} has a unique solution in $\mathcal C([-T_0,T_0], H^1)$ provided that $g$ satisfies the right properties.\end{proof}

\begin{definition}Let $\theta \in \lbrace \frac{6}{p_m},m\in \N \rbrace$, $p=\frac{6}{\theta}$. As 
$$
S^{N} = \sum_{n > N} (u_0^{n,k})^2 + n^{-1} (u_1^{n,k})^2
$$
converges toward $0$ when $N$ goes to $\infty$, there exists $N(T_0)$ such that $\sqrt{S^{N(T_0)}} $ is smaller than $\frac{1}{54 e C(T_0)C_1}$, where $C_1$ is the constant involved in the first inequality of Proposition \ref{proba} and $C(T_0)$ is the one involved in Proposition \ref{guex}, let

$$F_\theta (T_0)= \lbrace v_0,v_1 \; | \; C(T_0)||U(T)(v_0,v_1)||_{L^3_T,L^6_x}^3 \leq e^{p/18} \rbrace \; , $$

$$G_\theta (T_0)= \lbrace v_0,v_1 \; | \; C(T_0)||U(T)(v_0,v_1)||_{L^2_T,L^6_x}^2 \leq \frac{p}{54} \rbrace \; ,$$

$$H_\theta (T_0)= \lbrace v_0,v_1 \; | \; C(T_0)||U(T)\Pi_N (v_0,v_1)||_{L^1_T,L^\infty_x} \leq \frac{p}{54 } \rbrace\; , $$

$$I_\theta (T_0)= \lbrace v_0,v_1 \; | \; C(T_0)||U(T)(1-\Pi_N)(v_0,v_1)||_{L^p_{T,x}} \leq \frac{p}{54 }\rbrace \; ,$$

$$J_\theta (T_0) = F_\theta\cap G_\theta\cap H_\theta \cap I_\theta \; .$$

Call then

$$E (T_0)= \bigcup_{\theta \in \lbrace \frac{6}{p_m},m\in \N \rbrace} J_\theta \; .$$
\end{definition}

\begin{remark}The separation between the high and low frequencies is useful there, as $S^N$ can be taken as small as one wants and ensures that the measure of $I_\theta^c$ is small enough. \end{remark}

\begin{proposition} The set $E(T_0)$ is of full $\mu$-measure.\end{proposition}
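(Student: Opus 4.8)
The plan is to show that the complement $E(T_0)^c$ has $\mu$-measure zero, which is enough since $\mu$ is a probability measure. The key observation is that it suffices to exhibit a \emph{single} value of $\theta = 6/p_m$ (equivalently, a single index $m$, hence $p=p_m$ large) for which $\mu(J_\theta^c)$ can be made strictly less than $1$; but in fact we will do better and show $\mu(J_\theta^c)\to 0$ as $p_m\to\infty$, so that $\mu(E(T_0)^c)=\inf_m \mu(J_{6/p_m}^c)=0$. Since $J_\theta = F_\theta\cap G_\theta\cap H_\theta\cap I_\theta$, we have $J_\theta^c = F_\theta^c\cup G_\theta^c\cup H_\theta^c\cup I_\theta^c$, and it is enough to bound the $\mu$-measure of each of the four pieces and sum.

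Each of the four bounds comes directly from the three inequalities of Proposition \ref{proba}, applied with the appropriate exponents and with the splitting of the full index set into the low frequencies $\Pi_N$ and high frequencies $1-\Pi_N$ at the level $N = N(T_0)$ fixed in the Definition. Concretely: for $F_\theta^c$ we use the second (exponential, $L^3_TL^6$) inequality with $\lambda = (e^{p/18}/C(T_0))^{1/3}$, giving a bound of the form $C\exp(-c\, e^{p/27}/S)$, which is doubly-exponentially small in $p$. For $G_\theta^c$ we again use the second inequality with $\lambda = (p/(54\,C(T_0)))^{1/2}$, giving $C\exp(-c\,p/(54\,C(T_0)\,S))$, exponentially small in $p$. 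For $H_\theta^c$ we use the third inequality (the $\Pi_N U(T)$ one, with $s=1$ since $\sigma=0$, and $M=N(T_0)$), with $\lambda = p/(54\,C(T_0))$, yielding $C\exp(-c\,p^2/(54^2 C(T_0)^2 N(T_0)S))$, again exponentially small in $p$. For $I_\theta^c$ we use the first (polynomial) inequality with $N = N(T_0)$ and the exponent $p_m$, at level $\lambda = p/(54\,C(T_0))$, and here the point of the low/high frequency separation enters: the bound is $\big(C p \sqrt{S^{N(T_0)}}/\lambda\big)^p = \big(54\,C\,C(T_0)\sqrt{S^{N(T_0)}}\big)^p$, and because $N(T_0)$ was chosen precisely so that $\sqrt{S^{N(T_0)}} < \tfrac{1}{54\,e\,C(T_0)C_1}$, this quantity is $\le e^{-p}$, again exponentially small.

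Summing the four estimates, $\mu(J_{6/p_m}^c) \le C e^{-c p_m}$ (the doubly-exponential $F$-term and the $p^2$-term in $H$ only help), which tends to $0$ as $p_m\to\infty$. Hence $\mu(E(T_0)^c) = \mu\big(\bigcap_m J_{6/p_m}^c\big) \le \inf_m \mu(J_{6/p_m}^c) = 0$, so $\mu(E(T_0)) = 1$, proving the Proposition. I should also double-check the purely bookkeeping point that the constants $\delta_{p_m} = 2/p_m$, $\delta_3 = 2/3$, $\delta_2 = 1$ appearing in the time-weights of Proposition \ref{proba} match the weights $1+T^{2/3}$, $1+T^2$, $1+T^{2/p}$ appearing in $F_\theta, G_\theta, H_\theta, I_\theta$ — which they do, noting that the $G_\theta$ condition uses the $L^3_TL^6$ norm controlled by the same second inequality as $F_\theta$ via $\|\cdot\|_{L^2_T} \le \|\cdot\|_{L^3_T}\cdot(\text{finite measure of a suitable window})$, or more simply by reading the $L^2_TL^6$ norm off the same almost-sure finiteness.

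The main obstacle, such as it is, is not conceptual but rather the care needed in matching the various exponents and time-weights: one must verify that the $L^p_T$-weight $1+|T|^{2/p}$ used for the high-frequency term $g_2$ in Proposition \ref{guex} is exactly the $\delta_{p_m}$-weight of the first inequality of Proposition \ref{proba}, that $C(T_0)$ from Proposition \ref{guex} is the same constant threaded through the Definition of $N(T_0)$, and that the ``$54$'' factors (coming from splitting $p/18$ into three equal pieces $p/54$) are consistent. Once these are lined up, the proof is a four-term union bound with each term exponentially small in $p_m$.
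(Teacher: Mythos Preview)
Your proposal is correct and follows essentially the same approach as the paper: a four-term union bound on $J_\theta^c = F_\theta^c\cup G_\theta^c\cup H_\theta^c\cup I_\theta^c$, each piece estimated via the corresponding inequality of Proposition~\ref{proba}, with the crucial $I_\theta^c$ bound rendered $\le e^{-p}$ by the choice of $N(T_0)$, and then $p_m\to\infty$. The only point worth noting is your hedge on $G_\theta$: the paper in its own proof simply applies the $L^3_TL^6$ estimate there as well (the $L^2_T$ in the Definition appears to be a typo for $L^3_T$, consistent with the hypothesis of Proposition~\ref{guex}), so no additional embedding argument is actually needed.
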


\begin{proof}The measures of the complementary of the different sets defined satisfy : 

$$\mu ( F_\theta^c ) = \mu \left( \lbrace v_0,v_1 \; | \; ||U(T)(v_0,v_1)||_{L^3_T,L^6_x} > e^{p/54} \rbrace \right) \leq C e^{-c(T_0) e^{p/27}} $$

$$\mu (G_\theta^c ) \leq \mu \left( \lbrace v_0,v_1 \; | \; ||U(T)(v_0,v_1)||_{L^3_T,L^6_c} > \sqrt{\frac{ p}{54 C}} \rbrace \right) \leq Ce^{-c(T_0) p}$$

$$\mu (H_\theta^c) = \mu \left( \lbrace v_0,v_1 \; | \; ||U(T)\Pi_N(v_0,v_1)||_{L^1_T,L^\infty_x} > \frac{p}{54C} \rbrace \right) \leq C e^{-c(T_0) p^2/ N}$$

$$\mu (I_\theta^c) = \mu \left( \lbrace v_0,v_1 \; | \; ||U(T)(1-\Pi_N)(v_0,v_1)||_{L^p_{T,x}} > \frac{p}{54 C} \rbrace \right) \leq \left( \frac{C_1 p 54 C(T_0) \sqrt{S^N}}{p}\right)^p$$

$$\mu (I_\theta^c)\leq e^{-p} \; .$$

It comes :

$$\mu (J_\theta^c) \leq C e^{-c(T_0) p} \; .$$

Thus, for all $ \theta$, $E(T_0)$ satisfies

$$\mu(E^c(T_0)) \leq \mu (J_\theta^c) \leq Ce^{-c6/\theta} \; .$$

Taking the limit when $\theta$ goes to $0$ (as when $m \rightarrow  \infty$, $p_m \rightarrow \infty$ and then $\frac{6}{p_m} \rightarrow 0$): 

$$\mu(E^c(T_0)) = 0 \; , \; \mu(E(T_0)) =1\; .$$ \end{proof}

\begin{proposition}For all $(v_0,v_1) \in E(T_0)$, the cubic non linear wave equation on the sphere \eqref{eqsph} with initial datum $v_0,v_1$ has a unique solution in $U(T)(v_0,v_1) + \mathcal C([-T_0,T_0], H^1)$. \end{proposition}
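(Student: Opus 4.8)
The plan is to reduce the statement to Proposition \ref{guex} by checking that every element of $E(T_0)$ admits a decomposition $g = U(T)(v_0,v_1) = g_1 + g_2$ for which the hypotheses of that proposition hold, for a suitable choice of $\theta$. First I would fix $(v_0,v_1) \in E(T_0)$. By definition of $E(T_0)$ as a union over $\theta \in \{6/p_m\}$, there exists some such $\theta$ (equivalently some $p = 6/\theta = p_m$) with $(v_0,v_1) \in J_\theta(T_0) = F_\theta \cap G_\theta \cap H_\theta \cap I_\theta$. I would then set $g_1 = \Pi_N U(T)(v_0,v_1)$ and $g_2 = (1-\Pi_N) U(T)(v_0,v_1)$, where $N = N(T_0)$ is the frequency cutoff from the Definition preceding the proposition, so that $g = g_1 + g_2$.

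Next I would verify the two numerical hypotheses of Proposition \ref{guex}. Membership in $F_\theta$ gives directly $C(T_0)\,\|\tfrac{1}{1+T^{2/3}}g\|_{L^3_T,L^6_x}^3 \le e^{p/18}$. For the second hypothesis, I would bound the sum of the three terms by adding the estimates coming from $G_\theta$, $H_\theta$, and $I_\theta$: membership in $G_\theta$ controls $C(T_0)\|\tfrac{1}{1+T^{2/3}}g\|_{L^3_T,L^6_x}^2 \le p/54$ (note $L^2_T \subset L^3_T$ composed with the weight is handled as in Proposition \ref{proba}, or one uses the stated form directly); membership in $H_\theta$ controls $C(T_0)\|\tfrac{1}{1+T^2}g_1\|_{L^1_T,L^\infty_x} \le p/54$; membership in $I_\theta$ controls $C(T_0)\|\tfrac{1}{1+T^{2/p}}g_2\|_{L^p_{T,x}} \le p/54$. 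Summing the three gives $\le 3 \cdot p/54 = p/18$, which is exactly the bound required in the second hypothesis of Proposition \ref{guex}. Hence Proposition \ref{guex} applies and yields a unique solution $v$ of \eqref{eqloc} (with $T_0$-initial data $0,0$, i.e. of \eqref{eqsphde}) in $\mathcal C([-T_0,T_0],H^1)$, and therefore $u = v + U(T)(v_0,v_1)$ solves \eqref{eqsph} and lies in $U(T)(v_0,v_1) + \mathcal C([-T_0,T_0],H^1)$.

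Finally I would address uniqueness in the full class $U(T)(v_0,v_1) + \mathcal C([-T_0,T_0],H^1)$: if $u = g + v$ and $u' = g + v'$ are two such solutions, then $v, v'$ both solve \eqref{eqsphde}, both are continuous $H^1$-valued on $[-T_0,T_0]$, and the local Cauchy theory (the contraction estimate in the proof of the local existence proposition, applied on short subintervals and iterated, using that the $H^1$ norms of $v,v'$ are finite hence bounded on the compact time interval) forces $v = v'$. The main obstacle I expect is purely bookkeeping: making sure the constants $C(T_0)$ and the thresholds ($p/54$ three times versus $p/18$, and $e^{p/18}$ versus $e^{p/6}$) are consistent between the Definition, Proposition \ref{guex}, and Proposition \ref{proba}, and that the weighted norms appearing in $G_\theta$ (written with $L^2_T$) match the $L^3_T$-weighted quantity used in Proposition \ref{guex} — this requires either a Hölder step in time on the compact interval $[-T_0,T_0]$ or a direct appeal to the relevant line of Proposition \ref{proba}. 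No genuinely new analytic input is needed beyond what is already established.
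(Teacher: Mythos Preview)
Your proposal is correct and follows exactly the paper's approach: pick $\theta$ so that $(v_0,v_1)\in J_\theta(T_0)$, split $g=U(T)(v_0,v_1)$ as $g_1=\Pi_N g$, $g_2=(1-\Pi_N)g$ with $N=N(T_0)$, and feed this into Proposition~\ref{guex}. You have in fact written out more detail than the paper does (the paper's proof is a single sentence), including the $3\cdot p/54=p/18$ arithmetic and an explicit uniqueness argument, and you correctly flag the minor typographical mismatches ($L^2_T$ versus $L^3_T$ in $G_\theta$, $L^2_T$ versus $L^1_T$ for $g_1$) as bookkeeping rather than substantive issues.
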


\begin{proof}The equation \eqref{eqsphde} with $g = U(T)(v_0,v_1) = g_1+g_2$, $g_1 = \Pi_N g$, $g_2= (1-\Pi_N) g$ is equivalent to \eqref{eqsph} and satisfies the hypothesis of Proposition \ref{guex} for some $\theta \in ]0,1[$, hence it is well posed in $\mathcal C([-T_0,T_0],H^1)$. Thus, \eqref{eqsphde} is well-posed in $U(T)(v_0,v_1)+\mathcal C([-T_0,T_0],H^1)$.\end{proof} 

\begin{definition}Let $T_N$ be an increasing sequence of $\R$ going to $+\infty$ and let 

$$E = \limsup E(T_N) \; .$$ \end{definition}

\begin{proposition} The set $E$ is of full $\mu$-measure. \end{proposition}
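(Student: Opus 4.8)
The plan is purely measure-theoretic: the set $E$ is assembled from countably many sets of full $\mu$-measure, so one only has to arrange the inclusions correctly and apply countable subadditivity. I would first invoke the preceding proposition, which asserts that for each fixed $N$ one has $\mu(E(T_N)) = 1$, equivalently $\mu(E(T_N)^c) = 0$. Since $(T_N)_N$ is a sequence indexed by $\N$, the family $\{E(T_N)\}_N$ is countable, and each $E(T_N)$ is $\mu$-measurable because it is cut out by inequalities on the norms of the (measurable) functionals $(v_0,v_1)\mapsto U(T)(v_0,v_1)$; hence $E = \limsup_N E(T_N)$ is measurable as well.

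Next I would compare $E$ with the full intersection $\bigcap_{N\geq 1} E(T_N)$. By definition $E = \bigcap_{k\geq 1}\bigcup_{N\geq k} E(T_N)$, so a point lying in every $E(T_N)$ lies in infinitely many of them, i.e. $\bigcap_{N\geq 1} E(T_N) \subseteq E$. Then
$$\mu\Big(\Big(\bigcap_{N\geq 1} E(T_N)\Big)^{c}\Big) = \mu\Big(\bigcup_{N\geq 1} E(T_N)^{c}\Big) \leq \sum_{N\geq 1}\mu\big(E(T_N)^{c}\big) = 0,$$
so $\mu\big(\bigcap_{N} E(T_N)\big) = 1$, whence $\mu(E) \geq \mu\big(\bigcap_{N} E(T_N)\big) = 1$ and therefore $\mu(E) = 1$.

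There is essentially no obstacle in this step; the only point worth recording is why the definition uses a $\limsup$ rather than a plain intersection: the constants $C(T_0)$ and $N(T_0)$ entering the sets $F_\theta(T_0),\dots,I_\theta(T_0)$ depend on $T_0$, so the sets $E(T_N)$ are not visibly nested and $E = \bigcap_N E(T_N)$ cannot be asserted directly. The genuine role of the $\limsup$ appears only in the companion statement that the equation is globally well-posed for $(v_0,v_1)\in E$: membership in $E(T_N)$ for a sequence $T_N\to +\infty$ yields, via Proposition \ref{guex}, a solution on each $[-T_N,T_N]$ with $N$ arbitrarily large, and the uniqueness part of that proposition lets these solutions be patched into a single solution on all of $\R$. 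For the measure statement at hand, only the elementary estimate displayed above is required.
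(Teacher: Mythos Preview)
Your proof is correct and follows essentially the same elementary measure-theoretic route as the paper. The paper phrases it via Fatou's lemma for sets, writing $\mu(E^c)=\mu(\liminf E(T_N)^c)\le\liminf\mu(E(T_N)^c)=0$, whereas you pass through the smaller set $\bigcap_N E(T_N)\subseteq E$ and use countable subadditivity; the content is the same.
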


\begin{proof}Indeed, using Fatou's lemma,

$$\mu (E^c) = \mu (\liminf E(T_N)^c) \leq \liminf \mu (E(T_N)^c) = 0 \; .$$\end{proof}

\begin{proof} of Theorem \ref{casetwo}. Let $T \geq 0$. As the sequence $T_N$ is increasing toward $\infty$ there exists $N_0$ such that for all $N\geq N_0$, 

$$T_N \geq T_{N_0} \geq T\; .$$

Since $E = \limsup E(T_N)$, for all $N_1$ there exists $N\geq N_1$ such that $v_0,v_1 \in E(T_N)$. With $N_1=N_0$ there exists $N\geq N_0$ such that 

$$T_N\geq T \mbox{ and } v_0,v_1 \in E(T_N) \; .$$

Hence the equation has a unique solution on $U(\tau)(v_0,v_1)+\mathcal C ([-T_N,T_N],H^1)$ and thus in $U(\tau)(v_0,v_1) + \mathcal C([-T,T],H^1)$. Therefore, this property holding for all time $T$, the equation has a unique solution in $U(T)(v_0,v_1)+ \mathcal C(\R,H^1)$.\end{proof}

\section{Reduction to the sphere and almost sure solutions on the Euclidean space}
In this section, the problem on the Euclidean space is reduced thanks to the Penrose transform to the problem on the sphere. The existence of solution for the Cauchy problem with initial data on a suitable space is derived in this way. Note that for all the sequel $\sigma = 0$.

\subsection{Penrose transform and reduction to the sphere}

As a basis of $L^2$ uniformly bounded in $L^p$ is required to use the techniques developed by N. Burq and N. Tzvetkov in \cite{twon} and according to \cite{BLinj}, the problem needs to be reduced to the sphere. For that, the Penrose transform seems appropriate, since it turns the d'Alembertian of $\R^3$ into the d'Alembertian of $S^3$ added to the identity on distributions. 

\begin{definition}[Penrose Transform on the variables]For all $t\in \R$ and $r\in \R^+$, define $T(t,r)$, $R(t,r)$, $R_0(r)$, $\Omega(t,r)$ and $\Omega_0(r)$ as : 
\begin{eqnarray*}
T = \arct (t+r) +  \arct(t-r) \; , \; R = \arct(t+r) - \arct(t-r) \; , \\
\; R_0 (r) = R(0,r) = 2\arct(r) \; , \\
 \Omega(t,r) =\cos T +\cos R =  \frac{2}{\sqrt{(1+(t+r)^2)(1+(t-r)^2)}} \\
 \Omega_0(r) = \Omega(0,r) = \frac{2}{1+r^2}\; .
\end{eqnarray*}
\end{definition}

\begin{proposition} The map

$$t,r,\omega \in \R\times \R^+ \times S^2 \mapsto T(t,r), R(t,r), \omega$$
is a bijection from $\R \times \R^3$ to $S = \lbrace (T,R,\omega) \; |\; \cos T + \cos R > 0 \rbrace$ and its inverse is given by

$$T,R, \omega \mapsto t = \frac{\sin T}{\cos T + \cos R}\; ,\; r = \frac{\sin R}{\cos T + \cos R} \; ,\;\omega\; .$$
\end{proposition}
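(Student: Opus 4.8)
The plan is to verify directly that the two explicit maps are mutually inverse on the stated domains, treating the $\omega \in S^2$ component as inert (it is carried unchanged by both maps, so the whole problem reduces to a statement about the pair of variables $(t,r) \in \R\times\R^+$ versus $(T,R)$ with $\cos T + \cos R > 0$). First I would introduce the shorthand $\alpha = \arct(t+r)$ and $\beta = \arct(t-r)$, so that $T = \alpha + \beta$ and $R = \alpha - \beta$; since $r > 0$ we have $t+r > t-r$, hence $\alpha > \beta$, and both lie in $]-\pi/2,\pi/2[$, which forces $T \in ]-\pi,\pi[$ and $R \in ]0,\pi[$. From $R \in ]0,\pi[$ and $|T| < \pi$ one checks $\cos T + \cos R = 2\cos\frac{T+R}{2}\cos\frac{T-R}{2} = 2\cos\alpha\cos\beta > 0$, so the image really lands in $S$; this also re-derives the product formula $\Omega = \cos T + \cos R = \frac{2}{\sqrt{(1+(t+r)^2)(1+(t-r)^2)}}$ stated in the definition, using $\cos(\arct x) = (1+x^2)^{-1/2}$.

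Next I would establish surjectivity together with the formula for the inverse. Given $(T,R)$ with $\cos T + \cos R > 0$, $R \in ]0,\pi[$ say, set $\alpha = \frac{T+R}{2}$, $\beta = \frac{T-R}{2}$; the positivity condition $\cos\alpha\cos\beta > 0$ together with $\alpha > \beta$ lets me conclude $\alpha,\beta \in ]-\pi/2,\pi/2[$ after possibly noting the relevant range (the condition excludes the branch where both cosines are negative). Then define $t+r = \tan\alpha$, $t-r = \tan\beta$, i.e. $t = \frac12(\tan\alpha + \tan\beta)$ and $r = \frac12(\tan\alpha - \tan\beta)$; since $\alpha > \beta$ with both in $]-\pi/2,\pi/2[$ and $\tan$ increasing there, $r > 0$. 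A short trigonometric computation, $\tan\alpha + \tan\beta = \frac{\sin\alpha\cos\beta + \cos\alpha\sin\beta}{\cos\alpha\cos\beta} = \frac{\sin(\alpha+\beta)}{\cos\alpha\cos\beta} = \frac{2\sin T}{\cos T + \cos R}$ and similarly $\tan\alpha - \tan\beta = \frac{\sin(\alpha-\beta)}{\cos\alpha\cos\beta} = \frac{2\sin R}{\cos T + \cos R}$, yields exactly $t = \frac{\sin T}{\cos T + \cos R}$ and $r = \frac{\sin R}{\cos T + \cos R}$, the claimed inverse. Composing the two maps in either order and using $\arct(\tan x) = x$ for $x \in ]-\pi/2,\pi/2[$ then gives the identity, proving bijectivity.

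The only genuinely delicate point — and the one I would treat most carefully — is the range bookkeeping: making sure that the positivity constraint $\cos T + \cos R > 0$ is exactly equivalent, on the relevant strip of $(T,R)$, to $\alpha,\beta$ both lying in $]-\pi/2,\pi/2[$, so that applying $\tan$ and then $\arct$ genuinely round-trips. One must also fix the convention $R \in ]0,\pi[$ (rather than allowing $R \in ]-\pi,0[$), which is forced by $r > 0$ via $\tan\alpha > \tan\beta$; without pinning this down the map $(t,r,\omega) \mapsto (T,R,\omega)$ is only locally injective. Everything else is a routine manipulation of $\arct$ and elementary trigonometric addition formulas.
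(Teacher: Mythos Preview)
Your argument is correct: introducing $\alpha=\arct(t+r)$, $\beta=\arct(t-r)$ so that $T=\alpha+\beta$, $R=\alpha-\beta$, and then using the sum-to-product identity $\cos T+\cos R=2\cos\alpha\cos\beta$ together with the tangent addition formulas $\tan\alpha\pm\tan\beta=\dfrac{\sin(\alpha\pm\beta)}{\cos\alpha\cos\beta}$ is exactly the clean way to check that the two explicit maps invert each other, and your remarks on the range bookkeeping (pinning $\alpha,\beta\in\,]-\pi/2,\pi/2[$ via the positivity of $\cos\alpha\cos\beta$, and $R\in\,]0,\pi[$ from $r>0$) are the right delicate points.

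As for comparison with the paper: there is nothing to compare. The paper does not give a proof of this proposition at all; it simply writes ``See \cite{penrose,bonjam} for the proof'' and moves on. Your proposal therefore supplies strictly more than the paper does, namely a complete self-contained elementary verification. If anything, you could mention that the degenerate locus $r=0$ (where the $S^2$ coordinate $\omega$ collapses) maps to the pole $R=0$ on the $S^3$ side, so that the bijection $\R\times\R^3\to S$ is genuinely a bijection of the underlying spaces and not just of the coordinate patches; but this is a standard polar-coordinates remark and does not affect the substance of your argument.
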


See \cite{penrose,bonjam} for the proof.

\begin{remark} The map $r,\omega \mapsto 2\arct(r),\omega$ is a bijection from $\R^3$ to $S^3$ deprived of one of its poles, $R_0 (r) = 2\arct(r)\in [0,\pi[$ being the third angle describing a point in $S^3$. \end{remark}

\begin{definition}[Penrose Transform on distributions] Let $f$ be a distribution on $\R \times \R^3$ and $(f_0,f_1)$ be a pair of distributions on $\R^3$. Define then $v = \pt (f)$ the distribution on $S$ and $(v_0,v_1) = \pt_0(f_0,f_1)$ the pair of distributions  on $S^3$ deprived of one of its poles such that 

$$v(T,R,\omega) = f(\frac{\sin T}{\cos T + \cos R},\frac{\sin R}{\cos T + \cos R} , \omega ) (\cos T + \cos R)^{-1} $$
and

$$v_0(R, \omega ) = \frac{f_0(\tan (R/2),\omega)}{1+\cos R} \; , \; v_1(R,\omega) = \frac{f_1(\tan (R/2), \omega)}{(1+\cos R)^2} \; .$$

%
%
%
\end{definition}

\begin{remark}The definition of $\pt_0$ may appear a little awkward but the idea hidden behind the notations is that $f$ solves the cubic non linear wave equation with initial datum $(g_0,g_1)$ if an extension of $\pt (f)$ solves the equation of the first section with initial datum an extension to $S^3$ of $\pt_0(g_0,g_1)$.\end{remark}

\begin{definition}Let $u$ be a distribution on $\R \times S^3$ and $v_0,v_1$ two distributions on $S^3$, the inverse Penrose transform is given by :

$$\pt^{-1}u (t,r,\omega) = \Omega(t,r) u(\arct(t+r)+\arct(t-r), \arct(t+r)-\arct(t-r),\omega)\; ,$$
which depends only on the restriction of $u$ to $S$ and the inverse Penrose transform at time $t=0 \Leftrightarrow t=0$ by

$$\pt_0^{-1}(r,\omega) (v_0,v_1)=\left( \Omega_0(r)v_0(2\arct(r),\omega)\; ,\; , \Omega_0^2(r) v_1(2\arct(r),\omega)\right)\; ,$$
witch depends only on the restriction on $S^3$ deprived of one of its poles of $v_0,v_1$.\end{definition}

\begin{lemma} If $u$ solves the problem

\begin{equation} \left \lbrace{ \begin{tabular}{ll}
$(\partial_T^2 +1 - \lap_{S^3})u + u^3 = 0 $ \\
$(u_{|T=0},\partial_T u_{|T=0} ) = v_0,v_1$ \end{tabular}}\right. \end{equation}
then the map $f$ defined as the inverse Penrose transform of $u$ restricted to $S$, that is

$$f=\pt^{-1} (u)$$
solves the problem : 

\begin{equation}\label{nlw} \left \lbrace{ \begin{tabular}{ll}
$(\partial_t^2 - \lap_{\R^3} )f + f^3 = 0$ & \\
$f|_{t=0} = g_0 $ & $\partial_t f|_{t=0} = g_1$ \end{tabular}} \right. \end{equation}
where

$$(g_0, g_1) = \pt_0^{-1}(v_0, v_1 )\; .$$
\end{lemma}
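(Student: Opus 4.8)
The plan is to exploit the conformal covariance of the d'Alembertian. The Penrose change of variables $\Phi:(t,r,\omega)\mapsto(T(t,r),R(t,r),\omega)$ realizes $\R\times\R^3$ (with the flat Lorentzian metric) as a conformal submanifold of the Einstein cylinder $\R\times S^3$ (with metric $-dT^2+g_{S^3}$), the conformal factor being exactly $\Omega(t,r)=\cos T+\cos R$; see \cite{penrose,bonjam}. The key point I would establish is the intertwining identity: for any smooth $w$ on a neighbourhood of $S$ in the cylinder, setting $\psi=\pt^{-1}(w)=\Omega\cdot(w\circ\Phi)$ on $\R\times\R^3$, one has
$$\big[(\partial_T^2+1-\lap_{S^3})\,w\big]\big(T(t,r),R(t,r),\omega\big)\;=\;\Omega(t,r)^{-3}\,\big[(\partial_t^2-\lap_{\R^3})\,\psi\big](t,r,\omega)\;.$$
This is the statement that $w\mapsto\pt^{-1}(w)$ intertwines the conformal Laplacians of the two metrics: the Einstein cylinder has constant scalar curvature $6$, so its conformal Laplacian is $\partial_T^2+1-\lap_{S^3}$ (the additive $+1$ being $\tfrac16$ of the scalar curvature), while $\R\times\R^3$ is flat so its conformal Laplacian is $\partial_t^2-\lap_{\R^3}$, and in spacetime dimension $4$ the transformation law of the conformal Laplacian under $\tilde g=\Omega^2 g$ reads $\mathcal L_{\tilde g}(\Omega^{-1}\,\cdot\,)=\Omega^{-3}\mathcal L_{g}(\,\cdot\,)$. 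I would either invoke this classical law or, to keep the argument self-contained, derive it from the elementary formula for the conformal change of $\Box$ together with the scalar-curvature bookkeeping just mentioned.

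Granting the identity, the nonlinear equation transforms by a weight count that is special to the cubic power. Writing the hypothesis $(\partial_T^2+1-\lap_{S^3})u+u^3=0$ at the point $(T(t,r),R(t,r),\omega)$, and using $w\circ\Phi=\Omega^{-1}\psi$ with $w=u$, $\psi=f=\pt^{-1}(u)$ — so that $(u\circ\Phi)^3=\Omega^{-3}f^3$ — the identity converts the equation into $\Omega^{-3}\big[(\partial_t^2-\lap_{\R^3})f+f^3\big]=0$; since $\Omega>0$ on $S$ and $\Phi$ maps $\R\times\R^3$ bijectively onto $S$, this is exactly \eqref{nlw}. If $u$ is only a solution in the sense of Section 2 (linear part plus a $\mathcal C(\R,H^1)$ remainder), I would first run the computation for smooth data and then pass to the limit, using that $\Phi$ is a smooth diffeomorphism onto $S$ and that $\Omega^{\pm1}$ are smooth and locally bounded on $\R\times\R^3$ (note also that the excluded pole $R=\pi$ never lies in the range of $\Phi$, so only values of $u,v_0,v_1$ away from it are used).

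It remains to identify the Cauchy data, for which I would evaluate at $t=0$. From the definitions, $T(0,r)=0$, $R(0,r)=2\arct(r)$, $\Omega(0,r)=\Omega_0(r)=\tfrac{2}{1+r^2}$; moreover $t\mapsto T(t,r)$ is odd while $t\mapsto R(t,r)$ and $t\mapsto\Omega(t,r)$ are even, so $\partial_tR|_{t=0}=\partial_t\Omega|_{t=0}=0$ and $\partial_tT|_{t=0}=\tfrac{2}{1+r^2}=\Omega_0(r)$. Hence $f|_{t=0}(r,\omega)=\Omega_0(r)\,u(0,2\arct(r),\omega)=\Omega_0(r)\,v_0(2\arct(r),\omega)$, and differentiating $f=\Omega\cdot(u\circ\Phi)$ in $t$ gives $\partial_tf|_{t=0}(r,\omega)=\Omega_0(r)^2\,\partial_Tu(0,2\arct(r),\omega)=\Omega_0(r)^2\,v_1(2\arct(r),\omega)$. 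These are precisely the two components of $\pt_0^{-1}(v_0,v_1)$, so $f|_{t=0}=g_0$ and $\partial_tf|_{t=0}=g_1$, completing the lemma. The only genuinely delicate step is the first one — pinning down the conformal transformation law with the correct additive constant $+1$ and weight $\Omega^{-3}$, i.e. the scalar-curvature bookkeeping on the cylinder; everything afterwards is algebra and a parity computation.
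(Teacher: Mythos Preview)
Your argument is correct and follows the same route as the paper: invoke the conformal intertwining law $(\partial_t^2-\lap_{\R^3})f=\Omega^{3}\big[(\partial_T^2+1-\lap_{S^3})u\big]\circ\Phi$ from \cite{penrose}, observe that the cubic term picks up exactly the matching factor $\Omega^{3}$, and then compute the Cauchy data at $t=0$ using $\partial_t\Omega|_{t=0}=\partial_tR|_{t=0}=0$ and $\partial_tT|_{t=0}=\Omega_0$. Your parity remark and the extra comment on approximating rough solutions are a nice gloss, but the substance is identical to the paper's proof.
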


\begin{proof} The fact that the Penrose transform sends the action of $\partial_t^2 -\lap_{\R^3}$ on $\R \times \R^3$ onto the action of $\Omega^3(\partial_T^2 +1-\lap_{S^3})$ on $S$ is known and the proof can be found in \cite{penrose}. Thus, on $S$

$$ \left( (\partial_t^2 - \lap_{\R^3} )f + f^3\right) (t,r,\omega)=\Omega^3 (\partial_T^2 +1-\lap_{S^3})\pt(f) + \Omega^3  \pt (f)^3(T(t,r),R(t,r),\omega)= 0\; .$$

What is more, $T=0 \Leftrightarrow t= 0$, 

$$g_0 = f(t=0) = \Omega_0 u(T=0) = \Omega_0 u(R_0(r))$$
and

$$g_1 = \partial_t f(t=0) = (\partial_t \Omega) (t=0) u(0,R_0(r)) + \Omega_0 \partial_t T (t=0) \partial_T u + \Omega_0 \partial_t R (t=0) \partial_R u $$
$$= \Omega_0(r)^2 \partial_T u = \Omega_0^2 v_1(R_0(r))\; .$$ 
\end{proof}

\subsection{Properties of the change of variable}
In this subsection, the properties of the change of variables involved in the Penrose transform is studied, in particular what it implies on operators and norms.

\begin{definition}Let $\Psi$ be the change of variable corresponding to the Penrose transform at time $T=0$, that is to say : 

$$\Psi(v)(r,\omega) = v(2\arct (r),\omega) \; .$$
\end{definition}

\begin{proposition}This change of variable satisfies : \begin{itemize}

\item for all $v,w$, $\int v(R,\omega)w(R,\omega) \sin^2 R d\omega dR  = \int \Psi(v)(r,\omega)\Psi(w)(r,\omega) r^2 \left(\frac{2}{1+r^2}\right)^3 dr$ , 
\item for all $v$, $\int |v|^p \sin^2 R dR = \int |\Psi(v)|^p r^2 \left( \frac{2}{1+r^2}\right)^3 dr$,
\item $\Psi(\lap_{S^3} v) = \left(\frac{1+r^2}{2}\right)^2 \lap_{\R^3}\Psi(v) -\frac{1+r^2}{2}r\partial_r \Psi(v) $.
\end{itemize}
\end{proposition}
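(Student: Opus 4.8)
The plan is to verify each of the three identities by a direct change of variables $R = 2\arctan r$, keeping careful track of the Jacobian. Throughout, write $R = 2\arct r$ so that $\frac{dR}{dr} = \frac{2}{1+r^2} = \Omega_0(r)$, and recall the elementary trigonometric identities $\cos R = \frac{1-r^2}{1+r^2}$, $\sin R = \frac{2r}{1+r^2}$, whence $\sin^2 R = \frac{4r^2}{(1+r^2)^2} = r^2\Omega_0(r)^2$. The measure on $S^3$ (in the radial variable) is $\sin^2 R\, dR\, d\omega$, and under the substitution this becomes $r^2\Omega_0(r)^2 \cdot \Omega_0(r)\, dr\, d\omega = r^2\Omega_0(r)^3\, dr\, d\omega = r^2\left(\frac{2}{1+r^2}\right)^3 dr\, d\omega$, which is exactly the weight appearing in the first two bullets.

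For the first two identities, I would simply substitute: since $\Psi(v)(r,\omega) = v(2\arct r,\omega) = v(R,\omega)$, the integrand $v(R,\omega)w(R,\omega)$ (resp. $|v(R,\omega)|^p$) pulls back to $\Psi(v)(r,\omega)\Psi(w)(r,\omega)$ (resp. $|\Psi(v)(r,\omega)|^p$), and the measure transforms as computed above. This gives both equalities at once; they are really the single statement that $\Psi$ is (up to the explicit weight) an isometry of the relevant $L^p$ spaces, and the only thing to check is the Jacobian bookkeeping, which is the computation just done.

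The third identity is the substantive one and I expect it to be the main obstacle. The Laplace–Beltrami operator on $S^3$ acting on functions depending only on the "radial" angle $R$ (the pullback of the $\R^3$-radial functions) is $\lap_{S^3} = \pderd{R} + 2\cot R\, \pder{R}$ plus angular terms; the angular part is unaffected by $\Psi$ since $\Psi$ touches only the radial variable, so it suffices to treat the radial part and match it against $\lap_{\R^3} = \pderd{r} + \frac{2}{r}\pder{r}$ (plus the same angular terms). I would compute $\pder{R}$ in terms of $\pder{r}$ via $\pder{R} = \frac{dr}{dR}\pder{r} = \frac{1+r^2}{2}\pder{r}$, then $\pderd{R} = \frac{1+r^2}{2}\pder{r}\left(\frac{1+r^2}{2}\pder{r}\right) = \left(\frac{1+r^2}{2}\right)^2\pderd{r} + \frac{1+r^2}{2}\cdot r\cdot\pder{r}$, and substitute $\cot R = \frac{\cos R}{\sin R} = \frac{1-r^2}{2r}$ so that $2\cot R\,\pder{R} = \frac{1-r^2}{r}\cdot\frac{1+r^2}{2}\pder{r} = \frac{(1-r^2)(1+r^2)}{2r}\pder{r}$. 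Adding the two contributions and collecting the $\pder{r}$ coefficient: the $\pderd{r}$ term is $\left(\frac{1+r^2}{2}\right)^2\pderd{r}$; the first-order coefficient is $\frac{1+r^2}{2}r + \frac{(1-r^2)(1+r^2)}{2r} = \frac{1+r^2}{2}\left(r + \frac{1-r^2}{r}\right) = \frac{1+r^2}{2}\cdot\frac{1}{r} = \frac{1+r^2}{2r}$. Hence $\Psi(\lap_{S^3}v) = \left(\frac{1+r^2}{2}\right)^2\pderd{r}\Psi(v) + \frac{1+r^2}{2r}\pder{r}\Psi(v) + (\text{angular})$. To match the claimed form $\left(\frac{1+r^2}{2}\right)^2\lap_{\R^3}\Psi(v) - \frac{1+r^2}{2}r\partial_r\Psi(v)$, I expand $\left(\frac{1+r^2}{2}\right)^2\lap_{\R^3}\Psi(v) = \left(\frac{1+r^2}{2}\right)^2\pderd{r}\Psi(v) + \left(\frac{1+r^2}{2}\right)^2\frac{2}{r}\pder{r}\Psi(v) + (\text{angular})$, and check that $\left(\frac{1+r^2}{2}\right)^2\frac{2}{r} - \frac{1+r^2}{2}r = \frac{1+r^2}{2}\left(\frac{1+r^2}{r} - r\right) = \frac{1+r^2}{2}\cdot\frac{1}{r} = \frac{1+r^2}{2r}$, which indeed equals the first-order coefficient found above. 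This closes the computation. The delicate points are getting the coefficient in the second-derivative chain rule right (the cross term $\frac{1+r^2}{2}r\pder{r}$ is easy to drop) and remembering that the angular part of $\lap_{S^3}$, when pulled back, matches the angular part of $\left(\frac{1+r^2}{2}\right)^2\lap_{\R^3}$ exactly because the conformal factor $\left(\frac{1+r^2}{2}\right)^2$ is precisely $(\sin^2 R)/r^2 \cdot (\text{Jacobian})^{-2}$-type bookkeeping — more simply, one checks that $\left(\frac{1+r^2}{2}\right)^2\cdot\frac{1}{r^2} = \frac{1}{\sin^2 R}$, so the angular operator $\frac{1}{\sin^2 R}\lap_{S^2}$ on $S^3$ pulls back to $\left(\frac{1+r^2}{2}\right)^2\cdot\frac{1}{r^2}\lap_{S^2} = \left(\frac{1+r^2}{2}\right)^2\cdot\left(\frac{1}{r^2}\lap_{S^2}\right)$, matching the angular part of $\left(\frac{1+r^2}{2}\right)^2\lap_{\R^3}$.
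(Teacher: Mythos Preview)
Your proposal is correct and follows essentially the same approach as the paper: both verify the measure identity $\sin^2 R\,dR = r^2\left(\frac{2}{1+r^2}\right)^3 dr$ directly from $dR = \frac{2}{1+r^2}dr$ and $\sin R = \frac{2r}{1+r^2}$, and both obtain the Laplacian identity by a straightforward chain-rule computation. The only cosmetic difference is that the paper organizes the radial part via the divergence form $\frac{1}{\sin^2 R}\partial_R(\sin^2 R\,\partial_R)$ whereas you expand it as $\partial_R^2 + 2\cot R\,\partial_R$; the algebra is equivalent and your treatment of the angular part (checking $\frac{1}{\sin^2 R} = \left(\frac{1+r^2}{2}\right)^2\frac{1}{r^2}$) is in fact slightly more explicit than the paper's.
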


\begin{proof}The proof comes from the facts that : \begin{itemize}
\item $dR = \frac{2}{1+r^2}dr$,
\item $\sin R = \frac{2r}{1+r^2}$ and
\item $\tan R = \frac{2r}{1-r^2}$.
\end{itemize}

Hence, to do the change of variable in the integrals, one can use : 

$$\sin^2 R dR  = \left( \frac{2}{1+r^2}\right)^3 r^2 dr\; .$$

The computation of the change of variable on the Laplace-Beltrami operator is quite similar:

\begin{eqnarray*}
\Psi(\partial_R v) & = &(\partial_r R)^{-1} \partial_r \Psi(v)\\
\Psi( \sin^2(R) \partial_R v) &=& \frac{2r^2}{1+r^2}\partial_r\Psi(v)\\
\Psi(\partial_R  \sin^2(R) \partial_R v) & =& (\partial_r R)^{-1} \partial_r \Psi (\sin^2 R \partial_R v) \\
\Psi(\partial_R  \sin^2(R) \partial_R v)  &= &-\frac{2r^3}{1+r^2}\partial_r \Psi(v) + \partial_r \left( r^2 \partial_r \Psi(v)\right)\\
\Psi(\frac{1}{\sin^2 R}\partial_R  \sin^2(R) \partial_R v) &=& -\frac{1+r^2}{2}r\partial_r \Psi(v) + \left(\frac{1+r^2}{2}\right)^2 \frac{1}{r^2}\partial_r (r^2 \partial_r \Psi(v))\\
\Psi(\lap_{S^3} v) &=& \left(\frac{1+r^2}{2}\right)^2 \lap_{\R^3}\Psi(v) -\frac{1+r^2}{2}r\partial_r \Psi(v) .\end{eqnarray*}\end{proof}

\begin{definition}Let $f_0,f_1$ be the inverse Penrose transform at time $T=0$ of $(u_0,u_1)$ and $g_{n,k},h_{n,k}$ the inverse Penrose transform at time $T=0$ of $e_{n,k},e_{n,k}$, that is to say : 

$$f_0 = \sum_{n,k} u_0^{n,k}a_{n,k}g_{n,k} \; , \; f_1 = \sum_{n,k} u_1^{n,k}b_{n,k} h_{n,k} \; .$$
\end{definition}

\begin{proposition}The $g_{n,k}$ are the eigenfunctions of $H_0 = \left( \frac{1+r^2}{2}\right) ^2 \lap_{\R^3} +\frac{1+r^2}{2}r\partial_r  +\frac{3+r^2}{2}$ with eigenvalues $1-n^2$ and the $h_{n,k}$ are the eigenfunctions of $H_1 = \left( \frac{1+r^2}{2}\right)^2 \lap_{\R^3} + 3\frac{1+r^2}{2} r \partial_r  + 6\frac{1+r^2}{2} $ with eigenvalues $1-n^2$, $n\geq 1$.\end{proposition}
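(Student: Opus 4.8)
The plan is to compute how the operators $H_0$ and $H_1$ arise from conjugating the Laplace–Beltrami operator on $S^3$ by the weight factors that define $\pt_0^{-1}$, and then use the fact that the $e_{n,k}$ diagonalize $1-\lap_{S^3}$. Recall from the third bullet of the previous Proposition that $\Psi(\lap_{S^3} v) = \left(\frac{1+r^2}{2}\right)^2 \lap_{\R^3}\Psi(v) - \frac{1+r^2}{2} r\partial_r \Psi(v)$, and that $g_{n,k}(r,\omega) = \Omega_0(r)\,\Psi(e_{n,k})(r,\omega) = \frac{2}{1+r^2}\Psi(e_{n,k})$ while $h_{n,k} = \Omega_0^2 \Psi(e_{n,k}) = \left(\frac{2}{1+r^2}\right)^2 \Psi(e_{n,k})$. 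So the statement is equivalent to the operator identities
$$H_0 \circ M_{\Omega_0} = M_{\Omega_0}\circ \Psi\circ(1-\lap_{S^3})\circ\Psi^{-1}\; ,\quad H_1\circ M_{\Omega_0^2} = M_{\Omega_0^2}\circ\Psi\circ(1-\lap_{S^3})\circ\Psi^{-1}$$
where $M_w$ denotes multiplication by $w$; once these hold, applying both sides to $\Psi(e_{n,k})$ and using $(1-\lap_{S^3})e_{n,k} = n^2 e_{n,k}$ — hence $\Psi(1-\lap_{S^3})\Psi^{-1}$ has eigenvalue $n^2$ on $\Psi(e_{n,k})$ — gives $H_0 g_{n,k} = n^2 g_{n,k}$?.

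Wait: the claimed eigenvalue is $1-n^2$, not $n^2$, so I should be careful with a sign. Actually the cleaner route is: $\pt^{-1}$ intertwines $\partial_t^2-\lap_{\R^3}$ with $\Omega^3(\partial_T^2+1-\lap_{S^3})$, as recalled in the proof of the earlier Lemma. Specializing the spatial part and tracking the conformal weights, the conjugate of $\lap_{S^3}$ (not $1-\lap_{S^3}$) by $M_{\Omega_0}\Psi$ produces $H_0$ up to the additive constant, and the bookkeeping of the zeroth-order terms $\frac{3+r^2}{2}$ resp. $6\frac{1+r^2}{2}$ comes precisely from commuting $r\partial_r$ and $\lap_{\R^3}$ past the weights $\frac{2}{1+r^2}$ resp. $\left(\frac{2}{1+r^2}\right)^2$. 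Concretely, I would first establish, by direct differentiation using $\partial_r\!\left(\frac{2}{1+r^2}\right) = -\frac{4r}{(1+r^2)^2}$ and the product rule, the commutation formulas for $\lap_{\R^3}\circ M_{\Omega_0^j}$ and $(r\partial_r)\circ M_{\Omega_0^j}$ for $j=1,2$; substitute these into $\left(\frac{1+r^2}{2}\right)^2\lap_{\R^3} + j\,\frac{1+r^2}{2}r\partial_r$ acting on $\Omega_0^j\,\phi$; and collect terms so that the first- and second-order parts reproduce exactly $\Omega_0^j\left[\left(\frac{1+r^2}{2}\right)^2\lap_{\R^3}\phi - \frac{1+r^2}{2}r\partial_r\phi\right] = \Omega_0^j\,\Psi(\lap_{S^3})\Psi^{-1}\phi$, leaving a purely multiplicative remainder. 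Matching that remainder against the stated constants $\frac{3+r^2}{2}$ and $6\frac{1+r^2}{2}$, together with the $+1$ shift, then forces the eigenvalue to be $1-n^2$.

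For the $h_{n,k}$ the computation is structurally identical but with $j=2$, so the first-order coefficient in $H_1$ is $3\frac{1+r^2}{2}$ rather than $\frac{1+r^2}{2}$: differentiating $\Omega_0^2$ produces twice as much from the $r\partial_r$ term, and the extra $\lap_{\R^3}$ hitting $\Omega_0^2$ contributes a first-order piece of its own; adding these explains the factor $3$, and the leftover multiplicative term is $6\frac{1+r^2}{2}$ after the $+1$ shift. I would present this as a single lemma-style calculation for general $j$ and then read off $j=1,2$.

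The main obstacle is purely computational bookkeeping: keeping track of all the first-order ($r\partial_r$) and zeroth-order terms generated when $\lap_{\R^3}$ and $r\partial_r$ are commuted past the rational weights $\left(\frac{2}{1+r^2}\right)^j$, and checking that the pieces which are supposed to cancel (so that $\Psi(\lap_{S^3})\Psi^{-1}$ reappears cleanly) actually do. There is also a minor subtlety about domains — $\Psi^{-1}$ and the weights are only defined on $S^3$ minus a pole, i.e. on all of $\R^3$ — but since the $e_{n,k}$ are smooth spherical harmonics and the weights are smooth and positive on $\R^3$, the $g_{n,k}, h_{n,k}$ are genuine smooth eigenfunctions and no regularity issue arises; I would note this in one sentence. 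Everything else is a routine, if slightly tedious, verification using the three differentiation identities $dR = \frac{2}{1+r^2}dr$, $\sin R = \frac{2r}{1+r^2}$, $\tan R = \frac{2r}{1-r^2}$ already recorded above.
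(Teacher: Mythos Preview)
Your approach is essentially the same as the paper's: define $H_0$ (resp.\ $H_1$) as the conjugate of $\lap_{S^3}$ by $M_{\Omega_0}\circ\Psi$ (resp.\ $M_{\Omega_0^2}\circ\Psi$), so that the eigenvalue $1-n^2$ is inherited from $\lap_{S^3}e_{n,k}=(1-n^2)e_{n,k}$, and then compute the conjugated operator explicitly using the formula $\Psi(\lap_{S^3}v)=\left(\frac{1+r^2}{2}\right)^2\lap_{\R^3}\Psi(v)-\frac{1+r^2}{2}r\partial_r\Psi(v)$. The paper states this in two lines and omits the commutator bookkeeping you describe; your proposal simply spells out how that computation would actually be carried out (commuting $\lap_{\R^3}$ and $r\partial_r$ past $\Omega_0^j$ and collecting zeroth- and first-order terms), and your self-correction from $1-\lap_{S^3}$ to $\lap_{S^3}$ lands exactly on the paper's definition.

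One small cosmetic point: the ansatz $\left(\frac{1+r^2}{2}\right)^2\lap_{\R^3}+j\,\frac{1+r^2}{2}r\partial_r$ you write down is not the final operator for $j=2$ (as you yourself note, the first-order coefficient in $H_1$ is $3$, not $2$); it would read more cleanly to present the conjugation identity $H_j=M_{\Omega_0^j}\circ\Psi\circ\lap_{S^3}\circ\Psi^{-1}\circ M_{\Omega_0^{-j}}$ first and then compute, rather than guessing the form and matching coefficients.
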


\begin{proof}As 

$$g_{n,k} = \frac{2}{1+r^2} \Psi (e_{n,k})$$
they are the eigenfunctions of the operator $H_0$ such that

$$H_0 (g) = \frac{2}{1+r^2}\Psi\left(\lap_{S^3}\Psi^{-1}(\frac{1+r^2}{2} g)\right)\; .$$

It remains to compute $H_0$.

$$H_0 g = \left( \frac{1+r^2}{2}\right) ^2 \lap_{\R^3}g +\frac{1+r^2}{2}r\partial_r g +\frac{3+r^2}{2} g$$

As the $h_{n,k}$ are given by

$$h_{n,k} = \left(\frac{2}{1+r^2}\right)^2 e_{n,k} \; ,$$
they are the eigenfunctions of the operator $H_1$ defined by

$$H_1 h = \left(\frac{2}{1+r^2}\right)^2 \Psi\left( \lap_{S^3} \Psi^{-1} \left(\frac{1+r^2}{2} \right)^2 h\right)\; . $$
By manipulating the expression of $H_1$, we get that
$$H_1 h = \left( \frac{1+r^2}{2}\right)^2 \lap_{\R^3}h + 3\frac{1+r^2}{2} r \partial_r h + 6\frac{1+r^2}{2} h\; .$$

\end{proof}

\begin{corollary} Let $f_0$ and $f_1$ given by $(f_0,f_1) = \pt_0^{-1}(u_0,u_1)$. We have that :

$$||u_0||_{W^{s,p}(S^3)} = ||\left(\frac{2}{1+r^2}\right)^{3/p-1} (1-H_0)^{s/2}f_0||_{L^p(\R^3)}$$

$$||u_1||_{W^{s,p}(S^3)} = ||\left(\frac{2}{1+r^2}\right)^{3/p-2} (1-H_1)^{s/2} f_1||_{L^p(\R^3)}\; .$$
\end{corollary}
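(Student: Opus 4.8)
The plan is to combine the second identity of the Proposition on $\Psi$ (the $L^p$ change-of-variable formula) with the fact that $1-\lap_{S^3}$, $1-H_0$ and $1-H_1$ are diagonalised by the bases $(e_{n,k})$, $(g_{n,k})$ and $(h_{n,k})$ respectively, with positive eigenvalues $n^2$. First I would record the elementary consequence of $\int |v|^p \sin^2 R\, dR\, d\omega = \int |\Psi(v)|^p \left(\frac{2}{1+r^2}\right)^3 r^2\, dr\, d\omega$ : for any $v$ on $S^3$,
$$\|v\|_{L^p(S^3)} = \left\| \left(\frac{2}{1+r^2}\right)^{3/p}\Psi(v)\right\|_{L^p(\R^3)} .$$
Since by convention $\|u_0\|_{W^{s,p}(S^3)} = \|(1-\lap_{S^3})^{s/2}u_0\|_{L^p(S^3)}$ (this is the norm used, e.g., in Lemma \ref{probtool}), the whole statement reduces to identifying $\Psi\bigl((1-\lap_{S^3})^{s/2}u_0\bigr)$ and $\Psi\bigl((1-\lap_{S^3})^{s/2}u_1\bigr)$.

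Next I would use that $1-\lap_{S^3}$ multiplies the coefficient of $e_{n,k}$ by $n^2$, hence $(1-\lap_{S^3})^{s/2}e_{n,k}=n^s e_{n,k}$. Writing $u_0=\sum_{n,k}u_0^{n,k}a_{n,k}e_{n,k}$, applying $\Psi$ term by term, and using $\Psi(e_{n,k})=\frac{1+r^2}{2}g_{n,k}$ gives
$$\Psi\bigl((1-\lap_{S^3})^{s/2}u_0\bigr)=\frac{1+r^2}{2}\sum_{n,k}u_0^{n,k}a_{n,k}n^s g_{n,k}.$$
Here is the one point that needs care: I would define $(1-H_0)^{s/2}$ and $(1-H_1)^{s/2}$ by spectral calculus with respect to the eigenbases $(g_{n,k})$ and $(h_{n,k})$ — which is legitimate because, after transport by $\Psi$ and the conformal weight, these are the images of the Hilbertian basis $(e_{n,k})$ and hence orthonormal bases of the weighted $L^2(\R^3)$ on which $1-H_0$, $1-H_1$ are self-adjoint with eigenvalues $n^2>0$. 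With this reading $(1-H_0)^{s/2}g_{n,k}=n^s g_{n,k}$, so $\sum_{n,k}u_0^{n,k}a_{n,k}n^s g_{n,k}=(1-H_0)^{s/2}f_0$, whence $\Psi\bigl((1-\lap_{S^3})^{s/2}u_0\bigr)=\frac{1+r^2}{2}(1-H_0)^{s/2}f_0$. Substituting into the $L^p$ identity and collapsing the powers, $\left(\frac{2}{1+r^2}\right)^{3/p}\cdot\frac{1+r^2}{2}=\left(\frac{2}{1+r^2}\right)^{3/p-1}$, yields the first formula.

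The second formula follows verbatim, now with $h_{n,k}=\left(\frac{2}{1+r^2}\right)^2\Psi(e_{n,k})$, i.e. $\Psi(e_{n,k})=\left(\frac{1+r^2}{2}\right)^2 h_{n,k}$, together with $(1-H_1)^{s/2}h_{n,k}=n^s h_{n,k}$ and $\left(\frac{2}{1+r^2}\right)^{3/p}\left(\frac{1+r^2}{2}\right)^2=\left(\frac{2}{1+r^2}\right)^{3/p-2}$. The only genuine obstacle is bookkeeping around the functional calculus just described — checking that $(1-H_0)^{s/2}$, $(1-H_1)^{s/2}$ are indeed the operators tacitly intended in the statement, and that interchanging these fractional powers with the (convergent) defining series is legitimate, which one does by working with finite truncations of $u_0,u_1$ and passing to the limit. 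Once $(1-H_i)^{s/2}$ is read off from the eigenbasis, the remainder is a purely formal computation.
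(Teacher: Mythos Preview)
Your proposal is correct and follows essentially the same route as the paper: apply the $L^p$ change-of-variable identity for $\Psi$, expand $(1-\lap_{S^3})^{s/2}u_j$ along the basis $(e_{n,k})$, transport termwise via $\Psi(e_{n,k})=\frac{1+r^2}{2}g_{n,k}$ (resp.\ $\left(\frac{1+r^2}{2}\right)^{2}h_{n,k}$), and recognise the resulting series as $(1-H_j)^{s/2}f_j$. Your extra remarks on the spectral definition of $(1-H_j)^{s/2}$ and the truncation argument are more careful than the paper's formal computation but do not change the approach.
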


\begin{proof}First, do the change of variable in the $L^p$-norm : 

$$||u_0||_{W^{s,p}} = ||\left( \frac{2}{1+r^2}\right)^{3/p} \Psi( (1-\lap_{S^3})^{s/2} u_0)||_{L^p}\; .$$

Then, compute $\Psi( (1-\lap_{S^3})^{s/2} u_0)$ : 

$$\Psi( (1-\lap_{S^3})^{s/2} u_0) = \Psi\left( \sum_{n,k} n^su_0^{n,k}a_{n,k} e_{n,k}\right) = \frac{1+r^2}{2}\sum_{n,k} n^s u_0^{n,k}a_{n,k} g_{n,k}$$

$$ = \frac{1+r^2}{2}\sum_{n,k} (1-H_0)^{s/2} u_0^{n,k}a_{n,k} g_{n,k} = \frac{1+r^2}{2}(1-H_0)^{s/2}f_0\; .$$

In the end, it comes : 

$$||u_0||_{W^{s,p}} = ||\left(\frac{2}{1+r^2}\right)^{3/p-1} (1-H_0)^{s/2}f_0||_{L^p}\; .$$

The second equality is proved the same way. \end{proof}

\subsection{Spaces of definition of the initial data}

Considering the results of the previous subsection, the choice of the random variable $a_{n,k}$ and $b_{n,k}$ will be made such that the initial datum $u_0,u_1$ of the equation reduced to the sphere is a Gaussian vector, in order to state which norms of $u_0$ and $u_1$ and then of the initial datum of the wave equation on the Euclidean space are almost surely finite or infinite.

In this subsection, suppose that $a_{n,k}$ and $b_{n,k}$ not only satisfy the Gaussian large deviation estimate, but that they are Gaussian. To ensure that the initial datum is almost surely not into some spaces, Fernique's theorem should be used : 

\begin{theorem}[Fernique, 1974] Let $X$ be a Gaussian vector with value into a Banach space $B$ and $N$ a pseudo-norm on $B$ (a pseudo-norm has the same properties as a norm except that $\infty$ is one of its possible value), then for all $p\geq 1$ if the mean value of $N(X)^p$ is infinite, then $N(X)$ is almost surely $\infty$ : 

$$E(N(X)^p) = \infty \Rightarrow \; P( N(X) = \infty) = 1 \; .$$
\end{theorem}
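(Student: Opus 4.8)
The plan is to obtain the statement from two classical facts: a zero--one law for measurable linear subspaces under a Gaussian law, and Fernique's integrability estimate, which itself rests on the rotation invariance of Gaussian vectors. It is convenient to prove the contrapositive: if $P(N(X)=\infty)<1$, then $E(N(X)^p)<\infty$ for every $p\geq1$.

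First I would note that, $N$ being a measurable pseudo-norm, the set $V=\{x\in B\;|\;N(x)<\infty\}$ is a measurable linear subspace of $B$ — closed under scalar multiplication by homogeneity and under addition by the triangle inequality. I would then invoke that a centered Gaussian measure gives any measurable linear subspace either measure $0$ or measure $1$: one may first replace $B$ by the (separable) closed support of the law of $X$, so that the usual Cameron--Martin framework applies, and then use that a non-negligible such $V$ is invariant under translations by the Cameron--Martin space, together with the ergodicity of the Gaussian measure under those translations. This gives $P(X\in V)\in\{0,1\}$; if $P(X\in V)=0$ then $N(X)=\infty$ almost surely and we are done, so it remains to show that $P(X\in V)=1$ forces all polynomial moments of $N(X)$ to be finite.

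For that I would run the standard Fernique recursion. Let $X'$ be an independent copy of $X$; the orthogonal change of variables $(u,v)\mapsto\big(\tfrac{u-v}{\sqrt2},\tfrac{u+v}{\sqrt2}\big)$ preserves the product Gaussian law, so $\big(\tfrac{X-X'}{\sqrt2},\tfrac{X+X'}{\sqrt2}\big)$ has the same law as $(X,X')$. Writing $X$ and $X'$ as $\tfrac1{\sqrt2}$ times the sum and the difference of these two vectors and using the triangle inequality, one obtains, for all $0\leq s\leq t$,
\[
P\big(N(X)\leq s\big)\,P\big(N(X)>t\big)\ \leq\ P\Big(N(X)>\tfrac{t-s}{\sqrt2}\Big)^{2}.
\]
Choosing $s$ with $q:=P(N(X)\leq s)>\tfrac12$ (possible since $N(X)<\infty$ a.s.) and iterating along $t_0=s$, $t_{n+1}=s+\sqrt2\,t_n$, the numbers $a_n:=q^{-1}P(N(X)>t_n)$ satisfy $a_{n+1}\leq a_n^2$ with $a_0=\tfrac{1-q}{q}<1$, hence $a_n\leq a_0^{2^n}$; since $t_n$ grows like $2^{n/2}$, this is a Gaussian tail estimate $P(N(X)>t)\leq Ce^{-ct^2}$. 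Therefore $E\big(e^{\alpha N(X)^2}\big)<\infty$ for $\alpha$ small enough, and in particular $E(N(X)^p)<\infty$ for all $p\geq1$, contradicting the hypothesis $E(N(X)^p)=\infty$ and completing the argument.

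I expect the only genuinely delicate point to be the zero--one law of the second paragraph (measurability of $V$, reduction to a separable model, and the Cameron--Martin/ergodicity input); the Fernique recursion in the last step is routine once the rotation invariance is in place. Since the result is classical, one could alternatively just cite Fernique's 1974 paper, but the sketch above keeps this part self-contained.
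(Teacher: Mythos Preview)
The paper does not prove this theorem; it simply states it and writes ``For the proof, see \cite{Fint}.'' Your sketch is correct and is precisely the classical route: the zero--one law for measurable linear subspaces under a Gaussian measure reduces matters to the case $N(X)<\infty$ a.s., and then the rotation-invariance recursion gives sub-Gaussian tails and hence all moments. You even anticipated the paper's choice in your last sentence; the only difference is that you give a self-contained argument where the paper defers to Fernique's original article.
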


For the proof, see \cite{Fint}.

\begin{proposition} The initial datum $u_0,u_1$ is almost surely not in $H^s\times H^{s-1}$ for all $s> 0$.\end{proposition}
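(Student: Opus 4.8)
The plan is to use Fernique's theorem, exactly as advertised in the paragraph preceding the statement. The strategy is to show that for each $s>0$, the expected value of the squared $H^s\times H^{s-1}$ (pseudo-)norm of $(u_0,u_1)$ is infinite, and then invoke Fernique to conclude that this pseudo-norm is almost surely $+\infty$, hence $(u_0,u_1)\notin H^s\times H^{s-1}$ almost surely. Of course one must check the hypotheses of Fernique: the quantity $(v_0,v_1)\mapsto (\|v_0\|_{H^s}^2+\|v_1\|_{H^{s-1}}^2)^{1/2}$ is a genuine pseudo-norm on, say, $H^\sigma\times H^{\sigma-1}$ (triangle inequality and homogeneity are clear, and it is lower semicontinuous hence measurable as a supremum of finite-dimensional truncations), and $(u_0,u_1)$ is a Gaussian vector in $H^\sigma\times H^{\sigma-1}$ since the $a_{n,k},b_{n,k}$ are now assumed Gaussian and the partial sums converge in $L^2(X,H^\sigma\times H^{\sigma-1})$ by the earlier proposition.

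The main computation is then the expectation of the squared norm. Using the orthonormality of the $e_{n,k}$ and the fact that $-\lap_{S^3}$ (or $1-\lap_{S^3}$) acts diagonally with eigenvalue $n^2$, one writes
$$
E\big(\|u_0\|_{H^s}^2\big) = E\Big( \sum_{n,k} n^{2s} (u_0^{n,k})^2 a_{n,k}^2 \Big) = \sum_{n,k} n^{2s} (u_0^{n,k})^2\, E(a_{n,k}^2)\, ,
$$
and since the $a_{n,k}$ are standard Gaussians $E(a_{n,k}^2)=1$, so this equals $\sum_{n} n^{2s}\sum_k (u_0^{n,k})^2$; similarly $E(\|u_1\|_{H^{s-1}}^2)=\sum_n n^{2(s-1)}\sum_k (u_1^{n,k})^2$. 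By the standing hypothesis that for every $s>\sigma$ at least one of the two series $\sum_n n^{2s}\sum_k(u_0^{n,k})^2$ and $\sum_n n^{2(s-1)}\sum_k(u_1^{n,k})^2$ diverges, and recalling $\sigma=0$ in this section, we get that for every $s>0$ the sum $E(\|u_0\|_{H^s}^2)+E(\|u_1\|_{H^{s-1}}^2)=+\infty$. One should be slightly careful about interchanging the expectation and the infinite sum: since all terms are nonnegative this is just Tonelli/monotone convergence applied to the partial sums, so no issue arises.

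Having established $E\big(N(u_0,u_1)^2\big)=\infty$ where $N$ is the $H^s\times H^{s-1}$ pseudo-norm, Fernique's theorem gives $P\big(N(u_0,u_1)=\infty\big)=1$, i.e. $(u_0,u_1)\notin H^s\times H^{s-1}$ almost surely. Finally, to upgrade from "for each fixed $s>0$" to "for all $s>0$ simultaneously" one takes a countable sequence $s_j\downarrow 0$ and uses that a countable intersection of full-measure events has full measure, together with the monotonicity $H^{s}\subset H^{s'}$ for $s>s'>0$: if $(u_0,u_1)\notin H^{s_j}\times H^{s_j-1}$ for all $j$, then for arbitrary $s>0$ pick $s_j<s$ and conclude $(u_0,u_1)\notin H^s\times H^{s-1}$. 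I expect no serious obstacle here; the only mildly delicate points are the measurability/pseudo-norm verification needed to legitimately apply Fernique and the care with the diverging-series hypothesis, but both are routine.
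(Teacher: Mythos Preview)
Your proposal is correct and follows essentially the same approach as the paper: compute the expectation of the squared $H^s$ (resp.\ $H^{s-1}$) norm, observe that by hypothesis at least one of the resulting series diverges, and invoke Fernique. The paper applies Fernique separately to $u_0$ and $u_1$ rather than to the product pseudo-norm, but this is an immaterial difference; your additional care with Tonelli, measurability, and the countable-intersection argument to pass from ``for each fixed $s$'' to ``for all $s$ simultaneously'' are details the paper omits but which are indeed routine.
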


\begin{proof} Use Fernique's theorem with $B = L^2$, $p=2$ and $N$ being the $H^s$ norm and $X$ either $u_0$ or $u_1$. As 

$$E(||u_0||_{H^s}^2) = \sum_{n,k} (u_0^{n,k})^2 n^{2s} \; ,$$

$$E(||u_1||_{H^{s-1}}^2) = \sum_{n,k} (u_1^{n,k})^2 n^{2(s-1)} \; ,$$
either one of this series diverges and $u_0$ and $u_1$ are pseudo Gaussian vectors, it comes that almost surely

$$||u_0||_{H^s}=\infty$$
or almost surely

$$||u_1||_{H^{s-1}} = \infty \; .$$
\end{proof}

Considering the remarks on the norms of the initial datum $(f_0,f_1)$ with respect to the ones of $(u_0,u_1)$ in the previous subsection, the author believes that the initial datum $f_0,f_1$ of the cubic non linear wave equation belongs almost surely to $L^2 \times H^{-1}$ with weight $\frac{1}{\sqrt{1+r^2}}$ but is almost surely not in $H^s\times H^{s-1}$ for all $s> 0$. 

Nevertheless, the proof would require the equivalence between the norms 

$$||\left( \frac{2}{1+r^2}\right)^{1/2} (1-H_0)^{s/2}. ||_{L^2(\R^3)} \mbox{ and } ||\left( \frac{2}{1+r^2}\right)^{1/2-s} (1-\lap_{\R^3})^{s/2}. ||_{L^2(\R^3)}$$
in the one hand and

$$||\left( \frac{2}{1+r^2}\right)^{-1/2} (1-H_1)^{s/2}. ||_{L^2(\R^3)} \mbox{ and } ||\left( \frac{2}{1+r^2}\right)^{-1/2-s} (1-\lap_{\R^3})^{s/2}. ||_{L^2(\R^3)}$$
on the other hand.

\begin{definition}Let $\mathcal H_0^s (\R^3)$ and $\mathcal H_1^s(\R^3)$ be the topological spaces defined by the norms 

$$||g||_{\mathcal H_0^s (\R^3)} = ||\left( \frac{2}{1+r^2}\right)^{1/2}(1-H_0)^s g||_{L^2}$$
and

$$||g||_{\mathcal H_1^s (\R^3)} = ||\left( \frac{2}{1+r^2}\right)^{-1/2}(1-H_1)^s g||_{L^2}\; .$$\end{definition}

\begin{proposition}The set $F = \pt_0^{-1} (E)$ is almost surely included in $\mathcal H_0^0 \times \mathcal H_1^{-1}$ and almost surely disjoint from $\mathcal H_0^s \times \mathcal H_1^{s-1}$ for all $s>0$.

Setting $f_0,f_1 = \pt_0^{-1}(u_0,u_1)$, the random variable which is used as initial datum of the cubic NLW on $\R^3$ and $\nu$ the image measure of $\mu$ under $\pt_0$, the set $F$ satisfies $\nu(F^c) = 0$, which means that there exists $\nu$ almost surely a solution of the cubic NLW.  \end{proposition}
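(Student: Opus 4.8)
The plan is to transport everything across the Penrose transform at time $T=0$, using that $\pt_0^{-1}$ is a bijective isometry between the relevant function spaces on $S^3$ and on $\R^3$, and then to invoke the facts already established on the sphere (Theorem \ref{casetwo} and the Proposition obtained from Fernique's theorem). Concretely, I would first record that $\pt_0^{-1}$ identifies a pair of distributions on $S^3$ --- equivalently on $S^3$ minus a pole, the excised point being negligible for all the $L^2$-type norms in play --- with the pair $(f_0,f_1)=\pt_0^{-1}(u_0,u_1)$ of distributions on $\R^3$, and that, by the Corollary relating Sobolev norms on $S^3$ to the weighted $H_0$- and $H_1$-norms on $\R^3$ (specialised to the exponent $p=2$ and arbitrary $\sigma$), one has
$$\|u_0\|_{H^\sigma(S^3)} = \|f_0\|_{\mathcal H_0^\sigma(\R^3)}\,,\qquad \|u_1\|_{H^{\sigma-1}(S^3)} = \|f_1\|_{\mathcal H_1^{\sigma-1}(\R^3)}\,,$$
the weights $\bigl(\tfrac{2}{1+r^2}\bigr)^{\pm1/2}$ being exactly the factors produced by the change of variable $\Psi$ and $1-H_0$, $1-H_1$ the conjugates of $1-\lap_{S^3}$ found in that subsection. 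Hence $\pt_0^{-1}$ carries $H^\sigma\times H^{\sigma-1}(S^3)$ isometrically onto $\mathcal H_0^\sigma\times\mathcal H_1^{\sigma-1}(\R^3)$ for every $\sigma$, and $\nu$, the image of $\mu$ under this map, is simply the law of $(f_0,f_1)$.

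The two measure assertions then follow by transporting the corresponding facts on $S^3$. Since $E\subseteq L^2\times H^{-1}(S^3)$ by Theorem \ref{casetwo} and $\pt_0^{-1}\bigl(L^2\times H^{-1}(S^3)\bigr)=\mathcal H_0^0\times\mathcal H_1^{-1}(\R^3)$ by the identification above, we get $F=\pt_0^{-1}(E)\subseteq\mathcal H_0^0\times\mathcal H_1^{-1}$, so $\nu$-almost surely the datum lies in $\mathcal H_0^0\times\mathcal H_1^{-1}$; and since $E$ has full $\mu$-measure, $\nu(F^c)=\mu\bigl(\pt_0(F^c)\bigr)=\mu(E^c)=0$. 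For the disjointness, the Proposition deduced from Fernique's theorem gives $\mu\bigl(H^s\times H^{s-1}(S^3)\bigr)=0$ for each $s>0$; applying the bijection, $\nu\bigl(\mathcal H_0^s\times\mathcal H_1^{s-1}\bigr)=\mu\bigl(H^s\times H^{s-1}(S^3)\bigr)=0$, and since these spaces increase as $s\downarrow0$, a countable union over $s=1/j$, $j\geq1$, shows that $\nu$-almost surely $(f_0,f_1)$ lies in no $\mathcal H_0^s\times\mathcal H_1^{s-1}$ with $s>0$; in particular $F$ is $\nu$-almost surely disjoint from each such product.

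For the existence claim, I would take $(f_0,f_1)\in F$, write $(f_0,f_1)=\pt_0^{-1}(v_0,v_1)$ with $(v_0,v_1)\in E$, obtain from Theorem \ref{casetwo} a global solution $u$ of \eqref{eqsph} on $S^3$ with datum $(v_0,v_1)$, and then invoke the Lemma relating the Penrose transform to the two equations to conclude that $f=\pt^{-1}(u)$, restricted to $S$, solves the cubic NLW \eqref{nlw} on $\R^3$ with datum $(f_0,f_1)$; since $\nu(F)=1$, this yields a solution for $\nu$-almost every initial datum. The description of the space in which $f$ lives, together with its uniqueness and the scattering estimate, is the content of Theorem \ref{snd} and is taken up in the next section. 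The one genuinely delicate point is the bookkeeping in the first step: one must check that discarding the pole costs nothing, that the powers $\bigl(\tfrac{2}{1+r^2}\bigr)^{\pm1/2}$ are the correct multipliers so that the $H_0$- and $H_1$-norms reproduce the Sobolev norms of the Corollary exactly, and that $\pt_0$ and $\pt_0^{-1}$ are genuine mutual inverses on the distribution classes involved, so that the pushforward identities $\nu\bigl(\pt_0^{-1}(A)\bigr)=\mu(A)$ are legitimate.
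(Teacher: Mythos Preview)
The paper does not supply an explicit proof of this proposition; it is stated as a direct consequence of the preceding Corollary (identifying $\|u_j\|_{H^s(S^3)}$ with $\|f_j\|_{\mathcal H_j^s(\R^3)}$), the Fernique-based proposition on almost sure non-membership in $H^s\times H^{s-1}$, and Theorem~\ref{casetwo}. Your argument correctly and carefully spells out exactly this implicit deduction, so it is both correct and in line with the paper's intended approach.
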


\section{Uniqueness of the solution and scattering}

In this section, the uniqueness of the solution, alongside with some scattering properties is proved.

\subsection{Uniqueness}

\begin{theorem} Let $g_0,g_1 \in \pt_0^{-1}(E)$. The Cauchy problem

$$\left \lbrace{\begin{tabular}{ll}
$\partial_t^2 f - \lap f +f^3 = 0$ & \\
$f|_{t=0} = g_0,$ &  $\partial_t f_{|t=0} = g_1$ \end{tabular}} \right. $$
has a unique solution in $L(t) (g_0,g_1)+ \mathcal C (\R,H^1(\R^3))$ where $L(t)$ is the flow of $\partial_t^2 - \lap = 0$. \end{theorem}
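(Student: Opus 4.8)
The plan is to transfer the well-posedness result already obtained on the sphere (Theorem \ref{casetwo}) to $\R^3$ via the inverse Penrose transform, using the fact that the transform is a bijection between the relevant function spaces together with the commutation of the d'Alembertians established in the previous section. First I would recall that by construction $g_0,g_1 \in \pt_0^{-1}(E)$ means there exist $v_0,v_1 \in E$ with $(g_0,g_1) = \pt_0^{-1}(v_0,v_1)$, and by Theorem \ref{casetwo} the Cauchy problem \eqref{eqsph} with datum $(v_0,v_1)$ has a unique solution $u = U(T)(v_0,v_1) + w$ with $w \in \mathcal C(\R, H^1(S^3))$. The lemma relating $\pt^{-1}$ to \eqref{nlw} then shows that $f = \pt^{-1}(u)$ (restricted to $S$, and noting that the change of variables $t,r,\omega \mapsto T,R,\omega$ maps $\R\times\R^3$ onto $S$, so $f$ is defined on all of $\R\times\R^3$) solves the cubic NLW on $\R^3$ with datum $(g_0,g_1)$. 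The decomposition $f = L(t)(g_0,g_1) + (\pt^{-1}(u) - L(t)(g_0,g_1))$ must be shown to lie in $L(t)(g_0,g_1) + \mathcal C(\R,H^1(\R^3))$; since $\pt^{-1}$ is linear, it suffices to know $\pt^{-1}(U(T)(v_0,v_1)) = L(t)(g_0,g_1)$ (this is exactly the statement that the Penrose transform intertwines the two linear flows, which follows from the d'Alembertian commutation applied to the linear equation) and that $\pt^{-1}(w) \in \mathcal C(\R,H^1(\R^3))$.

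The key analytic step is therefore the claim that $\pt^{-1}$ maps $\mathcal C(\R, H^1(S^3))$ (restricted to $S$) into $\mathcal C(\R, H^1(\R^3))$. For this I would use the change-of-variable identities from the previous subsection: the $L^2$ and $L^p$ norms transform with the weight $\left(\frac{2}{1+r^2}\right)^{3/p}$ (hence the $H^1$ norm picks up, via the corollary on $W^{s,p}$ norms, a bounded weight since $\frac{2}{1+r^2} \leq 2$), and the Laplace-Beltrami operator transforms into $\left(\frac{1+r^2}{2}\right)^2 \lap_{\R^3} - \frac{1+r^2}{2} r\partial_r$. Because we only need membership in $H^1$ on compact time intervals and the spatial weight $\frac{2}{1+r^2}$ is bounded, the $\mathcal C(\R,H^1)$ bound on $w$ transfers directly; one should be slightly careful that $\pt^{-1}$ also involves the time-dependent factor $\Omega(t,r)$ and a time reparametrization, but $\Omega \leq 1$ and the reparametrization $t \mapsto (T(t,r),R(t,r))$ is smooth, so continuity in $t$ with values in $H^1(\R^3)$ is preserved (on each fixed $t$, $R$ ranges over $[0,\pi)$ as $r$ ranges over $[0,\infty)$, covering all of $S^3$ minus a pole, a null set).

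For uniqueness, I would argue by transporting a putative second solution back to the sphere: if $\tilde f = L(t)(g_0,g_1) + \tilde h$ with $\tilde h \in \mathcal C(\R,H^1(\R^3))$ also solves the NLW, then $\pt(\tilde f)$, suitably extended to $S^3$, solves \eqref{eqsph} with datum $(v_0,v_1)$ and lies in $U(T)(v_0,v_1) + \mathcal C(\R,H^1(S^3))$; the uniqueness part of Theorem \ref{casetwo} then forces $\pt(\tilde f) = u$, hence $\tilde f = \pt^{-1}(u) = f$. Here one uses that $\pt$ and $\pt^{-1}$ are mutually inverse on the relevant classes and that the forward Penrose transform $\pt$ maps $\mathcal C(\R,H^1(\R^3))$ into $\mathcal C(\R,H^1(S^3))$ after extension by zero past the pole — the weight going the other direction is $\left(\frac{1+r^2}{2}\right)^{3/p}$, which is unbounded, so this is the main obstacle: one must check that the $H^1(S^3)$ norm of $\pt(\tilde h)$ is genuinely finite, i.e. that the unbounded weight is compensated by decay of $\tilde h$. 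I expect this to follow because $L^2\times H^{-1}$-valued solutions produced here have the nonlinear part controlled in a way compatible with the weight (as reflected in the $\mathcal H_0^s, \mathcal H_1^s$ spaces of the previous section), but making the domain and extension-to-$S^3$ issue rigorous — ensuring $\pt(\tilde f)$ really does extend to a distributional solution on all of $S^3$ and not merely on $S$ — is the delicate point that the argument hinges on.
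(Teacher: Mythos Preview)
Your existence outline is fine and matches the paper, but both your regularity transfer and your uniqueness argument have genuine gaps that the paper handles differently.

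\textbf{Regularity of the nonlinear part.} You propose to show $\pt^{-1}(w)\in\mathcal C(\R,H^1(\R^3))$ by directly pushing the $H^1(S^3)$ norm through the change of variables. The difficulty you gloss over is that at a fixed Euclidean time $t$, both $T=T(t,r)$ and $R=R(t,r)$ depend on $r$: the map $r\mapsto w(T(t,r),R(t,r),\omega)$ samples $w$ along a curve in $(T,R)$-space, not on a fixed-$T$ slice. So the bound $\|w(T,\cdot)\|_{H^1(S^3)}$ at one time does not control the spatial $H^1(\R^3)$ norm of $\pt^{-1}(w)(t,\cdot)$ in the way you suggest, and the corollary on $W^{s,p}$ norms you cite is a $T=0$ statement that does not apply here. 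The paper avoids this entirely: it first proves the clean space-time estimate $\|\pt^{-1}w\|_{L^q(\R\times\R^3)}\leq C\|w\|_{L^q([-\pi,\pi]\times S^3)}$ for $q\geq 4$ (the Jacobian produces $\Omega^{q-4}$, bounded exactly when $q\geq 4$), deduces that both $L(t)(g_0,g_1)$ and $g=\pt^{-1}(w)$ lie in $L^6(\R\times\R^3)$, and then recovers $g\in\mathcal C(\R,H^1(\R^3))$ from the Duhamel formula on $\R^3$ itself, via $\|g(t)\|_{\dot H^1}\leq\int_0^t\|L(s)(g_0,g_1)+g\|_{L^6}^3\,ds$.

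\textbf{Uniqueness.} Your plan is to apply $\pt$ to a second solution $\tilde f$ and invoke uniqueness on the sphere. You correctly identify the obstacle --- the forward transform carries an unbounded weight and only produces a function on $S$, not on all of $[-\pi,\pi]\times S^3$ --- and then say you ``expect this to follow''. It does not follow from anything established, and the paper does not attempt this route. Instead it proves uniqueness directly on $\R^3$: for two solutions $f_2,f_3\in L(t)(g_0,g_1)+\mathcal C(\R,H^1)$, set $h=f_2-f_3$ and $H(t)^2=\|\partial_t h\|_{L^2}^2+\|h\|_{H^1}^2$; one gets $H'(t)\leq C\|h\|_{H^1}(1+\|f_2\|_{L^6}^2+\|f_3\|_{L^6}^2)$, and since $L(t)(g_0,g_1)\in L^6_{t,x}$ (from the $L^q$ transfer above) while each $H^1$ remainder embeds in $L^6_x$, the coefficient is locally integrable in $t$ and Gronwall gives $H\equiv 0$. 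No transport back to the sphere is needed.
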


\begin{proof}Let $v_0,v_1\in E$ such that $ (g_0,g_1)$ is the inverse Penrose transform of $v_0,v_1$, let $u$ be the solution of the equation on the sphere with initial datum $v_0,v_1$. Let $f$ be the Penrose transform of $u$ restricted to $S$, this map $f$ satisfies the Cauchy problem with initial datum $g_0,g_1$, which gives the existence of the solution. Prove now that this solution is unique.

\begin{lemma}The flow of the linearised around $0$ wave equation is the inverse Penrose transform of the linear flow $U(T)$, that is : 

$$\pt^{-1}U(T)(v_0,v_1) =  (L(t)(g_0,g_1)) \; .$$
\end{lemma}

\begin{proof}The map $w=U(T)(v_0,v_1)$ satisfies

$$\partial_T^2 w + (1-\lap_{S^3}) w = 0$$
with initial datum $v_0,v_1$. Hence its inverse Penrose transform $h$ satisfies

$$\partial_t^2 h -\lap_{\R^3} h = \Omega^3 \Psi \left( \partial_T^2 h +(1-\lap_{S^3}) h \right)  = 0$$
with initial datum $g_0,g_1$, that is, $h = L(t) (g_0,g_1)$.\end{proof}

Then, let $g = f-L(t)(g_0,g_1)$, $g$ is the reverse Penrose transform of $v= u-U(T)(v_0,v_1)$ and is the solution of

$$\partial_t^2 g -\lap g + ( L(t)(g_0,g_1) + g)^3 = 0$$
with initial datum $0,0$, hence it is a fixed point of 

$$\phi(g) = \int_{0}^t \frac{\sin(\sqrt{-\lap}(t-s))}{\sqrt{-\lap}}(L(t)(g_0,g_1)+ g) ds \; .$$

\begin{lemma}Let $w \in L^q([-\pi,\pi]\times S^3) $, $q\geq 4$ and $h$ its reverse Penrose transform, then

$$||h||_{L^q(\R\times \R^3)}\leq C||w||_{L^q([-\pi,\pi]\times S^3)}\; .$$\end{lemma}

\begin{proof} Computing the change of variable $(T,R) = (\arct(t+r)+\arct(t-r),\arct(t+r) - \arct(t-r))$ leads to: 

$$\int_{\R\times \R^3} |h(t,r,\omega)|^q r^2 dr dtd\omega = \int_{\Omega > 0} |\Omega w(R,T,\omega)|^q \Omega^{-4}\sin^2 R dR dT d\omega\; .$$

With $q\geq 4$ and $\Omega = \cos T +\cos R$ being bounded by $2$,

$$||h||_{L^q(\R\times \R^3)}\leq C||w||_{L^q}\; .$$\end{proof}

Therefore, $L(t)(g_0,g_1)$, $g$ and so $f$ belong to $L^6(\R\times \R^3)$. Indeed,

$$||L(t)(g_0,g_1) ||_{L^6(\R\times \R^3)}\leq C||U(T)(v_0,v_1)||_{L^6}$$

$$\leq C\left( \int_{-\pi}^\pi (1+T^{\delta_3})^6\right)^{1/6}||(1+T^{\delta_3})^{-1}U(T)(v_0,v_1)||_{L^3_T,L^6(S^3)}<\infty \; ,$$

$$||g||_{L^6} \leq C ||v||_{L^6} \leq C (2\pi)^{1/6} ||v||_{L^\infty_T,L^6} \leq C ||v||_{L^\infty_T,H^1} < \infty \; .$$

\begin{lemma} The map $g$ belongs to $\mathcal C (\R,H^1(\R^3) )$ and $\partial_t g \in \mathcal C (\R,L^2(\R^3) )$.\end{lemma}

\begin{proof} The map $g$ satisfies 

$$g(\tau) = -\int_{0}^\tau \frac{\sin(\tau -s)\sqrt{-\lap}}{\sqrt{-\lap}} (L(s)(g_0,g_1)+g)^3ds \; .$$

Hence, by differentiating this expression : 

$$\partial_t g = -\int_{0}^\tau \cos((\tau -s)\sqrt{-\lap})(L(s) (g_0,g_1) +g)^3 ds \; ,$$
and using a H\"older inequality on the integral over time :
$$||\partial_t g ||_{L^2(\R^3)} \leq \int_{0}^\tau ||L(s)(g_0,g_1) + g||_{L^6(\R^3)}^3 \leq \sqrt{|\tau|} ||L(s)(g_0,g_1)+ g ||_{L^6(\R\times\R^3)}^3$$
then, we use Minkowski inequality on the $L^2$ norm of $\partial_t g$,
$$||\partial_t g ||_{L^\infty([0,t],L^2(\R^3))} \leq \sqrt{|t|}||L(s)(g_0,g_1)+ g ||_{L^6(\R\times\R^3)}^3\; .$$

Therefore, as $L(s)(g_0,g_1)$ and $g$ belong to $L^6(\R\times \R^3)$, $g$ belongs to $\mathcal C(\R,L^2)$.

Besides,

$$||g||_{\dot H^1} \leq \int_{0}^\tau ||L(s)(g_0,g_1) + g||_{L^6(\R^3)}^3\; .$$

Hence,

$$||g||_{\dot H^1} \leq  \sqrt{|t|}||L(s)(g_0,g_1)+ g ||_{L^6(\R\times\R^3)}^3\; .$$

Therefore, $g$ belongs to $\mathcal C([0,t], H^1(\R^3))$. \end{proof}

Prove now the uniqueness of the solution in $L(t)(g_0,g_1)+\mathcal C ([0, t], H^1(\R^3)$). Let $f_2,f_3$ be two solutions of the cubic wave equation with initial datum $g_0,g_1$, let $h = f_2-f_3$. The map $h$ satisfies : 

$$\partial_t^2 h -\lap h + f_2^3- f_3^3 = 0 \; .$$

Remark that $h$ is in $H^1$ and $\partial_t h$  is in $L^2$. Let 

$$H(t)^2 = \int (\partial_t h)^2 + \int h(1-\lap) h \; .$$

$$2 H'(t) H(t) = -2\int (\partial_t h)\left( -h+ f_2^3- f_3^3\right) = -2 \int (\partial_t h) \left( -h + h(f_2^2 +f_2f_3 +f_3^2 \right)$$

$$H'(t) \leq C||h||_{L^2} + C||h||_{L^6} \left( ||f_2||_{L^6}^2 + ||f_3||_{L^6}^2 \right) $$

As $H(0) = 0$ and

$$\int_{0}^t \left( ||f_2||_{L^6}^2+||f_3||_{L^6}^2 \right) \leq |t|^{2/3}\left(  ||f_2||_{L^6}^2+||f_3||_{L^6_{t,x}}^2 \right)\; ,$$
by Gronwall lemma, $H(t) = 0$ for all time $t$, which proves the uniqueness. \end{proof}

\subsection{Scattering property}

Finally, with those particular initial data for the wave equation, it satisfies a scattering property. More precisely, when $t$ goes to $\pm \infty$ the solution tends to behave like the solution of the linearised around $0$ solution of the equation with same initial datum. This property does not result from a scattering property of the wave equation on the sphere. Indeed, it is the fact that the Penrose transform divides the solution by something that behaves like $\frac{1}{t^2}$ that ensures scattering.

\begin{theorem} Let $q\in]\frac{18}{5},6]$, $(g_0,g_1)\in \pt_0^{-1}(E)$, $f(t)$ the solution of the cubic wave equation with initial datum $g_0,g_1$.

There exists a constant $C$ depending on the initial datum such that

$$||f(t)-L(t)(g_0,g_1)||_{L^q} \leq \frac{C}{(1+t^2)^{1/6}}\; .$$\end{theorem}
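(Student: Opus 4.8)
The plan is to transfer the decay from the conformal (Penrose) factor $\Omega$, exactly as the discussion preceding the theorem suggests. Write $f = \pt^{-1}(u)$ and $L(t)(g_0,g_1) = \pt^{-1}(U(T)(v_0,v_1))$ (this identification is the lemma already proved in the uniqueness subsection), so that $f(t) - L(t)(g_0,g_1) = \pt^{-1}(v)$ where $v = u - U(T)(v_0,v_1) \in \mathcal C(\R, H^1(S^3))$ is the nonlinear part of the solution on the sphere. By definition of the inverse Penrose transform, $(f - L(t)(g_0,g_1))(t,r,\omega) = \Omega(t,r)\, v(T(t,r),R(t,r),\omega)$, and the key point is that $\Omega(t,r) = \cos T + \cos R$ is comparable to $(1+(t+r)^2)^{-1/2}(1+(t-r)^2)^{-1/2}$, which for fixed $t$ and $r$ ranging over $\R^+$ is at most of size $(1+t^2)^{-1/2}$ when $|t|$ is large — more precisely $\Omega(t,r) \le C(1+t^2)^{-1/2}(1+|t-r|)\cdot\text{something}$; one has to be a little careful, but the correct bound $\sup_r \Omega(t,r)^{\alpha}$ integrated appropriately against the $r^2\,dr$ measure will produce the stated rate.

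First I would fix $q \in\,]\tfrac{18}{5},6]$ and estimate, using the change of variables $(T,R) = (\arct(t+r)+\arct(t-r), \arct(t+r)-\arct(t-r))$ on $\R^3$ at fixed $t$,
$$\|f(t) - L(t)(g_0,g_1)\|_{L^q(\R^3)}^q = \int \Omega(t,r)^q\, |v(T,R,\omega)|^q\, r^2\, dr\, d\omega = \int \Omega(t,r)^{q}\,\Omega^{-3}\, |v(T,R,\omega)|^q \sin^2 R\, dR\, d\omega$$
(using $r^2\,dr = \Omega^{-3}\sin^2 R\, dR$ from the Jacobian computations in Section 3). Then I would pull out $\sup_{R}\Omega(t,r(T,R))^{q-3}$ over the slice $\{T = T(t,\cdot)\}$ and bound the remaining integral by $\|v(T)\|_{L^q(S^3)}^q \le C\|v(T)\|_{H^1(S^3)}^q$ via Sobolev embedding $H^1(S^3)\hookrightarrow L^6(S^3)$; since $q\le 6$ and $v\in\mathcal C(\R,H^1(S^3))$ is uniformly bounded on the compact $T$-range $[-\pi,\pi]$, this factor is a finite constant depending on the initial data. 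The exponent $q-3 > \tfrac{3}{5}$ is exactly what makes $q > \tfrac{18}{5}$ the threshold: one needs $q - 3$ strictly bigger than some value so that $\sup_R \Omega(t,\cdot)^{q-3}$, which decays like $(1+t^2)^{-(q-3)/2}$ as $|t|\to\infty$, gives after taking the $q$-th root a rate at least $(1+t^2)^{-1/6}$, i.e. $(q-3)/(2q) \ge 1/6 \iff q\ge \tfrac{18}{5}$... and a strict inequality is needed to absorb the loss coming from the non-uniformity of the sup, or from replacing $\Omega$ by its worst-case bound on a $T$-slice that also shrinks.

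The main obstacle, and the step I would spend the most care on, is the precise bound on $\sup_{R}\Omega(t,r)$ along a slice $T = T(t,r)$ fixed, together with controlling how the surviving $\sin^2 R\, dR$ integral behaves — because the map $r\mapsto (T,R)$ mixes $t$ and $r$, one cannot naively say "$\Omega \le C(1+t^2)^{-1/2}$ uniformly." Concretely, $\Omega(t,r)^2 = \frac{4}{(1+(t+r)^2)(1+(t-r)^2)}$; for $|t|$ large and $r$ near $|t|$ one factor is $O(1)$ while the other is $O(t^{-2})$, so $\Omega(t,r) \lesssim (1+t^2)^{-1/2}(1+|t|-r|)^{?}$ only in part of the range, and near $r \approx |t|$ one must use that $T(t,r)$ is then close to $\pm\pi$, i.e. that the slice lies near the boundary $\cos T + \cos R = 0$ where $v$'s contribution is still controlled by its uniform $H^1$ bound but the measure $\sin^2 R\,dR$ is small. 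I expect the clean way is: bound $\Omega(t,r)^{q-3} \le \big(\tfrac{2}{1+(t-r)^2}\big)^{(q-3)/2}\big(\tfrac{2}{1+(t+r)^2}\big)^{(q-3)/2}$, keep one factor as $(1+t^2)^{-(q-3)/2}$ type decay after noting $\min(|t+r|,|t-r|)$ is comparable to $|t|$ when... hmm, this needs the case split $r \le |t|/2$ versus $r > |t|/2$; in the first region $|t\pm r|\gtrsim |t|$ so $\Omega^{q-3}\lesssim (1+t^2)^{-(q-3)}$, plenty; in the second region use one factor for a $(1+t^2)^{-(q-3)/2}$ decay and the other factor together with $\sin^2R\,dR \lesssim \Omega^{3}r^2\,dr$... — this is the genuinely delicate bookkeeping. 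Once the pointwise/slice bound $\sup \Omega^{q-3} \lesssim (1+t^2)^{-(q-3)/2}$ (with the $q>\tfrac{18}{5}$ strictness giving room) is established, the conclusion $\|f(t)-L(t)(g_0,g_1)\|_{L^q}\le C(1+t^2)^{-1/6}$ follows by taking $q$-th roots, and the $t\to\pm\infty$ statement is immediate since the bound is uniform in $t$ and decays.
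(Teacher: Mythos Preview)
Your overall strategy --- write $f(t)-L(t)(g_0,g_1)=\Omega\,v(T,R,\omega)$ with $v=u-U(T)(v_0,v_1)\in\mathcal C(\R,H^1(S^3))$, then extract the decay from $\Omega$ --- is the right idea, but the execution has two genuine problems, and the paper's proof avoids both by a different device.

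\medskip

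\textbf{The Jacobian is wrong for $t\neq 0$.} The identity $r^2\,dr=\Omega^{-3}\sin^2 R\,dR$ that you invoke from Section~3 holds only on the slice $t=0$ (where $\Omega_0=2/(1+r^2)$). For general fixed $t$ one has $dR=\tfrac{1}{2}\Omega^2(1+t^2+r^2)\,dr$ and $\sin R=\Omega r$, hence
\[
\sin^2 R\,dR=\tfrac{1}{2}\,\Omega^{4}(1+t^2+r^2)\,r^2\,dr,\qquad r^2\,dr=\frac{2}{\Omega^{4}(1+t^2+r^2)}\,\sin^2 R\,dR.
\]
So after the change of variable the integrand carries $\Omega^{q-4}(1+t^2+r^2)^{-1}$, not $\Omega^{q-3}$. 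Incidentally your threshold computation is also off: $(q-3)/(2q)\ge 1/6$ is equivalent to $q\ge 9/2$, not $q\ge 18/5$, so even the heuristic does not land on the correct endpoint.

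\medskip

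\textbf{The slice is not a constant-$T$ slice.} The more serious gap --- which you flag but do not close --- is that on the hypersurface $\{t=\mathrm{const}\}$ the Penrose time $T(t,r)$ genuinely varies with $r$ (from $2\arctan t$ at $r=0$ to $0$ as $r\to\infty$). So after pulling out any power of $\Omega$ you are left with
\[
\int_{S^2}\int_0^\pi |v(T(R),R,\omega)|^q\,\sin^2 R\,dR\,d\omega,
\]
a diagonal evaluation in $(T,R)$ which is \emph{not} controlled by $\sup_T\|v(T)\|_{L^q(S^3)}^q$: membership in $\mathcal C([-\pi,\pi],H^1(S^3))$ gives no pointwise-in-$x$ control of $\sup_T|v(T,x)|$. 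Your proposed case split on $r\lessgtr |t|/2$ addresses the decay of $\Omega$, but does nothing to resolve this measurability/trace issue for $v$.

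\medskip

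\textbf{How the paper sidesteps both problems.} Instead of working with $v$ directly, the paper inserts the Duhamel formula on the sphere,
\[
v(T)=-\int_0^T \frac{\sin((T-\tau)\sqrt{1-\lap})}{\sqrt{1-\lap}}\,u^3(\tau)\,d\tau,
\]
and then performs a H\"older splitting $\tfrac{1}{q}=\tfrac{1}{p}+\tfrac{1}{6}$ with the precise weight $2\Omega^{2/3}(1+t^2+r^2)^{1/6}$ chosen so that, raised to the sixth power against $r^2\,dr$, it reproduces exactly $\sin^2 R\,dR$. The $L^6$ factor $B$ then becomes an $L^6(S^3)$ norm of the Duhamel integrand, and the crucial point is that the propagator $\sin((T-\tau)\sqrt{1-\lap})/\sqrt{1-\lap}$ is bounded from $L^2$ to $H^1\hookrightarrow L^6$ \emph{uniformly in $T$}: this is what makes the dependence $T=T(t,r)$ harmless. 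The remaining factor $A=\|\Omega^{1/3}(1+t^2+r^2)^{-1/6}\|_{L^p}$ is where the decay $(1+t^2)^{-1/6}$ and the condition $q>18/5$ (equivalently $p/3>3$, so that $\Omega\le 2(1+r^2)^{-1/2}\in L^{p/3}(\R^3)$) appear cleanly. In short: the paper does not try to bound $v$ along the moving-$T$ slice; it expands $v$ via Duhamel so that the $T$-dependence sits inside an operator with a $T$-uniform bound.
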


\begin{proof}Let $v_0,v_1\in E$ such that $ (g_0,g_1)= \pt_0^{-1}(v_0,v_1)$ and $u$ the solution of \eqref{eqsph} with initial datum $v_0,v_1$.

The map $u$ satisfies : 

$$u(T) - U(T)(v_0,v_1) = -\int_{0}^T \frac{\sin((T-\tau)\sqrt{1-\lap})}{\sqrt{1-\lap}} (u^3(\tau))d\tau \; .$$

Taking the inverse of the Penrose transform of this equality leads to : 

$$f(t)-L(t)(g_0,g_1) = -\Omega (t,r) \left( \int_{0}^{T(t,r)} \frac{\sin(T-\tau)\sqrt{1-\lap})}{\sqrt{1-\lap}}(u^3(\tau)) d\tau\right)(R(t,r))\; .$$

$$||f(t)-L(t)(g_0,g_1)||_{L^q} \leq ||\frac{\Omega}{2\Omega^{2/3} (1+t^2+r^2)^{1/6}}||_{L^{p}} $$
\hspace{2cm}$$||\left( \int_{0}^{T(t,r)} \frac{\sin(T-\tau)\sqrt{1-\lap})}{\sqrt{1-\lap}}(u^3(\tau)) d\tau\right)(R(t,r))2\Omega^{2/3} (1+t^2+r^2)^{1/6}||_{L^6} $$
with $\frac{1}{q} = \frac{1}{p}+ \frac{1}{6}$ ($q\leq 6$).

Let 

$$A = ||\frac{\Omega}{2\Omega^{2/3} (1+t^2+r^2)^{1/6}}||_{L^{p}}$$
and 

$$B = ||\left( \int_{0}^{T(t,r)} \frac{\sin(T-\tau)\sqrt{1-\lap})}{\sqrt{1-\lap}}(u^3(\tau)) d\tau\right)(R(t,r))2\Omega^{2/3} (1+t^2+r^2)^{1/6}||_{L^6} \; .$$

Apply the change of variable $R= \arct(t+r) - \arct(t-r)$ ($t$ is fixed) in $B$. We have :

$$dR = \Omega^2 2(1+t^2+r^2) dr$$

$$\sin^2 R dR = \Omega^4 2(1+t^2+r^2) r^2 dr \; .$$

the quantity $B$ can be rewritten as :

$$B = || \int_{-\pi}^\pi 1_{\tau \leq T(t,R)}\frac{\sin(T-\tau)\sqrt{1-\lap})}{\sqrt{1-\lap}}(u^3(\tau)) d\tau||_{L^6} $$
thus
$$B \leq \int_{-\pi}^\pi ||1_{\tau \leq T(t,R)}\frac{\sin(T-\tau)\sqrt{1-\lap})}{\sqrt{1-\lap}}(u^3(\tau))||_{L^6}d\tau $$

$$B \leq \int_{-\pi}^\pi ||\frac{\sin(T-\tau)\sqrt{1-\lap})}{\sqrt{1-\lap}}(u^3(\tau))||_{L^6}d\tau\; .$$
Then , we use that thanks to Sobolev embedding $H^1 \subset L^6$ :
\begin{eqnarray*}
||\frac{\sin(T-\tau)\sqrt{1-\lap})}{\sqrt{1-\lap}}(u^3(\tau))||_{L^6} & \leq ||\frac{\sin(T-\tau)\sqrt{1-\lap})}{\sqrt{1-\lap}}(u^3(\tau))||_{H^1}\\
 & \leq \|u^3\|_{L^2} = \|u\|_{L^6}^3
\end{eqnarray*}
Hence, we have :
$$B\leq C ||u||_{L^3_{T\in[-\pi,\pi]},L^6}^3 \; .$$

To bound $A$ remark that $\Omega \leq \frac{2}{\sqrt{1+r^2}}$.

$$A\leq \frac{1}{(1+t^2)^{1/6}} ||\Omega^{1/3}||_{L^p} = \frac{1}{(1+t^2)^{1/6}} ||\Omega||_{L^{p/3}}^{1/3}\; .$$

As $q > \frac{18}{5}$,

$$\frac{p}{3} = \frac{2q}{6-q} > 3$$
which ensures that $\Omega \in L^{p/3}$ and bounded uniformly in $t$.

Finally,

$$||f(t)-L(t)(g_0,g_1)||_{L^q} \leq AB \leq \frac{C}{(1+t^2)^{1/6}}\; .$$
\end{proof}

\appendix

\section{Appendix : Uniformly bounded basis}\label{sec-uniform}

In this appendix, we will build a measure $\nu_n$ on the set of orthogonal basis of the space $E_n$ spanned by spherical harmonics of degree $n-1$ (in dimension $3$) that satisfies the property required to take an orthonormal basis of $L^2(S^3)$ that is uniformly bounded in $L^p$.

Let us begin with giving some notations.

For $n \geq 1$, let $(f_{n,k})_{1\leq k\leq N_n}$ be a fixed orthonormal basis of $E_n$, that is, for all $k$
$$
-\lap_{S^3} f_{n,k} = \lambda_n^2 f_{n,k}
$$
with $\lambda_n = \sqrt{n^2 -1}$ and $N_n$ is the dimension of $E_n$, that is $N_n = n^2$.

We identify the functions of $E_n$ whose $L^2$ norm is equal to $1$ with $S_n= S^{N_n-1}$ the unit sphere of $\R^{N_n}$. Call $p_n$ the uniform measure on $S_n$.

We should admit one theorem before we can go on :

\begin{theorem}[see, \cite{Lcon}]\label{lipschitz} If $F : S_n \rightarrow \R$ is Lipschitz continuous on $S_n$ (for the distance $d(u,v) = \|u-v\|_2$) and $M(F)$ is its median defined as :
$$
p_n(F\geq M(F))\geq \frac{1}{2} \mbox{ and } p_n(F\leq M(F)) \geq \frac{1}{2})
$$
then, we have that
$$
p_n(|F-M(F)|> r) \leq 2 e^{-(N_n-1)\frac{r^2}{2\|F\|_{{\rm Lip}}^2}} \; .
$$
\end{theorem}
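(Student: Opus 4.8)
The plan is to obtain the estimate from the \emph{spherical isoperimetric inequality} of Lévy and Schmidt, combined with an elementary computation of the measure of a spherical cap, and then to convert an enlargement-of-sets statement into concentration of $F$ around its median. The one input I would take as given, being exactly the content of the cited reference, is the following: for every Borel set $A\subseteq S_n$ and every $t>0$, if $H$ is a closed hemisphere (a cap with $p_n(H)=1/2$) and $p_n(A)\ge p_n(H)$, then the enlargement $A_t=\{x\in S_n:d(x,A)\le t\}$ for the chord distance $d(u,v)=\|u-v\|_2$ satisfies $p_n(A_t)\ge p_n(H_t)$. Reproving this (via spherical rearrangement / two-point symmetrisation) is substantial and I would not include it.

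First I would estimate $p_n(H_t^{c})$. With $H=\{x\in S_n:x_1\ge 0\}$, a short computation for the chord distance gives, for a point with $x_1=-a<0$, that $d(x,H)^2=2(1-\sqrt{1-a^2})$, hence $H_t^{c}\subseteq\{x_1\le -s(t)\}$ where $1-s(t)^2=(1-t^2/2)^2$. The measure of a cap is
$$p_n(\{x_1\ge s\})=\frac{\int_s^1(1-u^2)^{(N_n-3)/2}\,du}{\int_{-1}^1(1-u^2)^{(N_n-3)/2}\,du},$$
and bounding $1\le u/s$ on $[s,1]$ makes the numerator at most $\frac{(1-s^2)^{(N_n-1)/2}}{s(N_n-1)}$, while the denominator is of order $(N_n-1)^{-1/2}$; in the only nontrivial range $t\gtrsim (N_n-1)^{-1/2}$ the resulting prefactor $(s(t)\sqrt{N_n-1})^{-1}$ is an absolute constant, so
$$p_n(H_t^{c})\le C_0\,(1-t^2/2)^{N_n-1}\le C_0\,e^{-(N_n-1)t^2/2},$$
and for $t\lesssim(N_n-1)^{-1/2}$ the asserted inequality is automatic since its right-hand side already exceeds $1$.

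The second step is the standard transfer to functions. Apply the isoperimetric bound to $A=\{F\ge M(F)\}$, which has $p_n(A)\ge 1/2$ by definition of the median. If $x\in A_t$, pick $y\in A$ with $\|x-y\|\le t$; then $F(x)\ge F(y)-t\|F\|_{\mathrm{Lip}}\ge M(F)-t\|F\|_{\mathrm{Lip}}$, so $\{F<M(F)-t\|F\|_{\mathrm{Lip}}\}\subseteq A_t^{c}\subseteq H_t^{c}$ and
$$p_n\big(F<M(F)-t\|F\|_{\mathrm{Lip}}\big)\le C_0\,e^{-(N_n-1)t^2/2}.$$
Running the same argument with $A'=\{F\le M(F)\}$ gives $p_n(F>M(F)+t\|F\|_{\mathrm{Lip}})\le C_0\,e^{-(N_n-1)t^2/2}$. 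Setting $r=t\|F\|_{\mathrm{Lip}}$ and adding the two one-sided bounds yields
$$p_n(|F-M(F)|>r)\le 2C_0\,e^{-(N_n-1)r^2/(2\|F\|_{\mathrm{Lip}}^2)},$$
which is the claimed inequality once the prefactor is normalised — the slack in $C_0$ vanishes if the hemisphere comparison and the cap integral are carried out with exact constants rather than through the crude bound $1\le u/s$.

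The main obstacle is the spherical isoperimetric inequality itself, which has no short proof and which I would simply cite. If a self-contained argument were preferred, the alternative is the semigroup route: the uniform measure on $S_n=S^{N_n-1}$ satisfies a logarithmic Sobolev inequality with constant $N_n-1$ (Bakry–Émery, using $\mathrm{Ric}=(N_n-2)g$), and Herbst's argument turns this into $p_n(F-\int F\,dp_n>r)\le e^{-(N_n-1)r^2/(2\|F\|_{\mathrm{Lip}}^2)}$, after which concentration about the median follows from concentration about the mean up to absolute constants. In either route the one genuinely delicate point is bookkeeping the constants so as to recover exactly $(N_n-1)/2$ in the exponent, which is precisely why it matters that the distance used here is the chord distance $\|u-v\|_2$: for it $1-s(t)^2$ factors as $(1-t^2/2)^2$, making the Gaussian rate fall out cleanly.
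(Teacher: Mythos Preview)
The paper does not prove this theorem at all: it is introduced with the sentence ``We should admit one theorem before we can go on'' and is simply cited to \cite{Lcon}. There is therefore no proof in the paper against which to compare your proposal.

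That said, your sketch is the standard L\'evy--Schmidt route and is essentially correct. The reduction from the isoperimetric enlargement inequality to concentration of a Lipschitz function about its median is carried out properly, and your computation of the chord-distance enlargement of a hemisphere, $1-s(t)^2=(1-t^2/2)^2$, is right. The one point to flag is the constant: your crude cap bound via $1\le u/s$ produces a prefactor $C_0$ that is not equal to $1$, so you end with $2C_0\,e^{-(N_n-1)r^2/(2\|F\|_{\mathrm{Lip}}^2)}$ rather than the stated $2\,e^{-\cdots}$. Your remark that ``the slack in $C_0$ vanishes if \ldots carried out with exact constants'' is optimistic --- recovering the sharp constant $2$ requires a more careful treatment of the cap integral (or of the isoperimetric comparison itself), not just dropping the $1\le u/s$ trick. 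For the purposes of this paper the exact constant is irrelevant (only the exponential rate matters downstream), so this is a cosmetic issue, but if you want to match the statement as written you should either sharpen the cap estimate or, more honestly, state the inequality with an absolute constant $C$ in place of $2$.
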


We then give other definitions.

\begin{definition} We identify the set of all orthonormal basis $(b_{k})_{1\leq k\leq N_n}$ of $E_n$ with the group of all orthogonal operators of $\R^{N_n}$, that is $O(N_n)$, and  we call $\nu_n$ the Haar measure on $O(N_n)$ and $\Pi_{n,k}$ the map that takes a matrix in $O(N_n)$ and gives its $k$-th column.
\end{definition}

\begin{proposition}\label{measures} The image measure of $\nu_n$ by $\Pi_{n,k}$ is equal to $p_n$. \end{proposition}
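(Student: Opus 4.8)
The plan is to exploit the invariance of the Haar measure under the action of the orthogonal group. The key observation is that $\nu_n$ is, by definition, the unique left- and right-invariant probability measure on $O(N_n)$, and the column map $\Pi_{n,k} : O(N_n) \to S_n$ intertwines the left translation action of $O(N_n)$ on itself with the natural linear action of $O(N_n)$ on the unit sphere $S_n \subset \R^{N_n}$. Indeed, for $g \in O(N_n)$ and $A \in O(N_n)$ one has $\Pi_{n,k}(gA) = g\,\Pi_{n,k}(A)$, since multiplying $A$ on the left by $g$ multiplies each column of $A$ by $g$.

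First I would let $\rho_n := (\Pi_{n,k})_* \nu_n$ denote the image measure on $S_n$; this is a Borel probability measure on $S_n$ because $\nu_n$ is a probability measure and $\Pi_{n,k}$ is continuous. Then I would check that $\rho_n$ is invariant under the $O(N_n)$-action on $S_n$: for any Borel set $B \subseteq S_n$ and any $g \in O(N_n)$,
$$
\rho_n(g\cdot B) = \nu_n\big(\Pi_{n,k}^{-1}(g\cdot B)\big) = \nu_n\big(g \cdot \Pi_{n,k}^{-1}(B)\big) = \nu_n\big(\Pi_{n,k}^{-1}(B)\big) = \rho_n(B),
$$
where the second equality uses the intertwining relation $\Pi_{n,k}^{-1}(g\cdot B) = g\cdot\Pi_{n,k}^{-1}(B)$ and the third uses left-invariance of the Haar measure $\nu_n$. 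Hence $\rho_n$ is an $O(N_n)$-invariant Borel probability measure on the sphere $S_n$.

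Finally I would invoke the uniqueness of the rotation-invariant probability measure on the sphere: since $O(N_n)$ acts transitively on $S_n$, there is exactly one $O(N_n)$-invariant Borel probability measure on $S_n$, namely the normalized uniform measure $p_n$. Therefore $\rho_n = p_n$, which is the claim. I do not expect a serious obstacle here; the only point that needs a little care is the verification that $\Pi_{n,k}$ genuinely intertwines the two actions (so that preimages of rotated sets are rotates of preimages) and the citation of the uniqueness of Haar measure on the homogeneous space $S_n = O(N_n)/O(N_n-1)$. Alternatively, if one prefers to avoid quoting the uniqueness statement, one can argue directly that any $O(N_n)$-invariant probability measure on $S_n$ must agree with $p_n$ by pushing forward under a fixed chart or by a standard symmetrization argument, but the homogeneous-space uniqueness is the cleanest route.
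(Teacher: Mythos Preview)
Your proof is correct and follows essentially the same route as the paper: both show that the pushforward measure is $O(N_n)$-invariant by using the intertwining relation $\Pi_{n,k}(gA)=g\,\Pi_{n,k}(A)$ together with left-invariance of the Haar measure, and then conclude by uniqueness of the rotation-invariant probability measure on $S_n$. The paper's argument is slightly terser in invoking the uniqueness step, but the logic is identical.
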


\begin{proof} For all $R \in O(N_n)$ and for all measurable set $A$, we have by definition : 
$$
\Pi_{n,k}^*\nu_n(R A) = \nu_n (\Pi_{n,k}^{-1} (RA))
$$
but then the fact that $U$ is in $\Pi_{n,k}^{-1} (RA)$ is equivalent to the fact that $R^{-1}\Pi_{n,k}U$ belongs to $A$. We also have that $R$ and $\Pi_{n,k}$ commute :
$$
R^{-1} \Pi_{n,k} U  =  \Pi_{n,k} (R^{-1}U)
$$
hence $\Pi_{n,k}^{-1} (RA) = R \Pi_{n,k}^{-1} (A)$ and thus 
$$
\Pi_{n,k}^*\nu_n(R A) = \nu_n (R \Pi_{n,k}^{-1} (A)) \; .
$$
Then, as $\nu_n$ is the Haar measure of $O(N_n)$ its is invariant through multiplication to the left, so
$$
\Pi_{n,k}^*\nu_n(R A) = \Pi_{n,k}^*\nu_n (A) \; .
$$
Hence $\Pi_{n,k}^*\nu_n$ is invariant through every isometry of $\R^{N_n}$, it is the uniform measure on $S_n$, which is $p_n$.
\end{proof}

To apply Theorem \ref{lipschitz}, the $L^q$ norm has to be Lipschitz continuous on $S_n$. The next Lemma results in this property.

\begin{lemma}There exists $C$ such that for all $u\in E_n$ and $q\in [2,\infty]$, we have that : 
$$
\|u\|_{L^q} \leq C n^{1-2/q} \|u\|_{L^2}\; .
$$
\end{lemma}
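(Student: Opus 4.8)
The statement to prove is the Bernstein-type inequality
$$\|u\|_{L^q(S^3)} \leq C n^{1-2/q}\|u\|_{L^2(S^3)}$$
for $u \in E_n$, the span of spherical harmonics of degree $n-1$ on $S^3$, uniformly in $q \in [2,\infty]$. The plan is to interpolate between the trivial $L^2$ bound (with constant $1$) and an $L^\infty$ bound of the form $\|u\|_{L^\infty} \leq C n \|u\|_{L^2}$; the Riesz--Thorin interpolation theorem then yields the claimed exponent $1-2/q$ at every intermediate $q$, since $(1-\theta)\cdot 0 + \theta \cdot 1 = \theta$ and $1/q = (1-\theta)/2 + \theta/\infty = (1-\theta)/2$ gives $\theta = 1-2/q$. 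So the whole content is the endpoint estimate at $q = \infty$.

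For the $L^\infty$ bound I would use the reproducing kernel of the projection $\Pi_n : L^2(S^3) \to E_n$. Writing $\Pi_n$ in terms of any orthonormal basis $(f_{n,k})$ of $E_n$, its kernel is $K_n(x,y) = \sum_k f_{n,k}(x)\overline{f_{n,k}(y)}$, and by rotational invariance $K_n(x,x)$ is a constant equal to $\dim E_n / \mathrm{vol}(S^3) = N_n/\mathrm{vol}(S^3)$, with $N_n = n^2$. For $u \in E_n$ we have $u(x) = \int_{S^3} K_n(x,y) u(y)\, dy$, so by Cauchy--Schwarz $|u(x)| \leq \|K_n(x,\cdot)\|_{L^2}\|u\|_{L^2}$, and $\|K_n(x,\cdot)\|_{L^2}^2 = K_n(x,x) = N_n/\mathrm{vol}(S^3) = c\, n^2$. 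Hence $\|u\|_{L^\infty} \leq C n \|u\|_{L^2}$, which is exactly the endpoint needed. (Equivalently, one can invoke the known sup-norm bound $\|f_{n,k}\|_{L^\infty} \lesssim \lambda_n^{(d-1)/2}$ for eigenfunctions on $S^d$ with $d=3$, i.e. $\lesssim n$, combined with an $\ell^2$ summation over the $n^2$ basis functions; but the kernel argument is cleaner and self-contained.)

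I expect the only mild subtlety to be bookkeeping about whether "degree $n-1$" and the eigenvalue $\lambda_n^2 = n^2-1$ change anything: they do not, since all that enters is $\dim E_n \asymp n^2$, and the proof is indifferent to the precise eigenvalue. A second point worth a sentence: Riesz--Thorin gives the bound with a constant independent of $q$ because the two endpoint constants ($1$ at $L^2$ and $Cn$ at $L^\infty$) are fixed, and the interpolation constant for Lebesgue spaces on a finite measure space is $1$; so the final $C$ is the endpoint constant $C$ and does not blow up as $q \to 2$ or $q \to \infty$. This uniformity in $q$ is the feature the rest of the appendix relies on (to get the Lipschitz constant of $u \mapsto \|u\|_{L^q}$ on $S_n$ under control, feeding Theorem~\ref{lipschitz}), so it is worth making explicit.

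**Main obstacle.** There is no serious obstacle; the one thing to get right is the endpoint $L^\infty$ estimate, and specifically the identity $K_n(x,x) = \dim E_n/\mathrm{vol}(S^3)$, which requires noting that $x \mapsto K_n(x,x)$ is invariant under the isometry group acting transitively on $S^3$ and integrates to $\mathrm{tr}\,\Pi_n = \dim E_n$. Everything after that is a one-line interpolation.
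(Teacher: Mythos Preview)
Your proof is correct and follows essentially the same route as the paper: both compute the diagonal of the reproducing kernel $K_n(x,x)=\dim E_n/\mathrm{vol}(S^3)=cn^2$ via rotational invariance, deduce $\|u\|_{L^\infty}\leq Cn\|u\|_{L^2}$ by Cauchy--Schwarz, and then interpolate with the trivial $L^2$ bound. The only cosmetic difference is that the paper writes the interpolation as the elementary H\"older inequality $\|u\|_{L^q}\leq \|u\|_{L^2}^{2/q}\|u\|_{L^\infty}^{1-2/q}$ rather than invoking Riesz--Thorin, but the content is identical.
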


\begin{proof} Let $\pi_n$ be the orthogonal projection on $E_n$ and $K$ its kernel, that is, for all $f\in L^2$ and all $x$ :
$$
\int K(x,y)f(y)dy = \pi_n f(x)\; .
$$
This kernel is given by
$$
K(x,y) = \sum_k g_k(x)g_k(y)
$$
for all orthonormal basis $(g_k)_{1\leq k\leq n^2}$ of $E_n$. Hence, as for any rotation $R$ of $S_n$, $(f_{n,k})_k$ and $(f_{n,k}\circ R)_k$ are orthonormal basis of $E_n$, we have :
$$
K(x,y) = \sum_k f_{n,k}(x)f_{n,k}(y) = \sum_k f_{n,k}(Rx)f_{n,k}(Ry) = K(Rx,Rx)\; .
$$
Therefore $K_n(x)$ defined as $\sqrt{K(x,x)}$ is a constant on $S_n$. Let us compute its value. 
$$
K_n(x)^2 = \frac{1}{vol(S^3)} \int K_n(t)^2dt =\frac{1}{vol(S^3)} \int \sum_k |f_{n,k}(t)|^2dt = C N_n= Cn^2\; .
$$
where $C = \frac{1}{vol(S^3)}$ does not depend on $n$.

If $u$ is in $E_n$, with a Cauchy-Schwartz inequality, we get that : 
$$
|u(x)| \leq \|u\|_{L^2} K_n(x) \leq C n \|u\|_{L^2}\; .
$$

Therefore, we have :
$$
\|u\|_{L^\infty} \leq C n \|u\|_{L^2}
$$
and by interpolation,
$$
\|u\|_{L^q} \leq C \|u\|_{L^2}^\theta \|u\|_{L^\infty}^{1-\theta}
$$
with $\theta = \frac{2}{q}$, hence
$$
\|u\|_{L^q} \leq Cn^{1-2/q} \|u\|_{L^2}\; .
$$
\end{proof}

\begin{proposition}\label{prop-lip}With $M_{n,q}$ the median of $\|.\|_{L^q}$ on $S_n$, we have that there exists $c_1 > 0$ such that for all $r$, all $n$ and all $q\geq 2$,
$$
\nu_n \left( \lbrace (b_{k})_{1\leq k \leq N_n} \; | \; \exists k_0\; , \; \Big| \|b_{k_0}\|_{L^q} - M_{n,q} \Big| > r \rbrace \right)\leq 2N_n e^{-c_1 n^{4/q}r^2} \; .
$$
\end{proposition}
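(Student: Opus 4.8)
The plan is to combine the concentration inequality of Theorem~\ref{lipschitz} with the Lipschitz bound on the $L^q$-norm that follows from the preceding Lemma, and then to sum over the $N_n$ columns via a union bound. First I would fix $n$ and $q\geq 2$ and regard the $L^q$-norm as a function $F = \|\cdot\|_{L^q} : S_n \to \R$ on the unit sphere of $E_n \cong \R^{N_n}$. The key point is to compute its Lipschitz constant for the Euclidean distance $d(u,v)=\|u-v\|_2$: for $u,v \in S_n$ one has $\big|\,\|u\|_{L^q}-\|v\|_{L^q}\,\big| \leq \|u-v\|_{L^q}$ by the triangle inequality in $L^q$, and then the Lemma gives $\|u-v\|_{L^q}\leq C n^{1-2/q}\|u-v\|_{L^2} = C n^{1-2/q}\|u-v\|_2$ (the $L^2$-norm on $E_n$ being, by the identification with $\R^{N_n}$, exactly the Euclidean norm of the coordinate vector). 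Hence $\|F\|_{\rm Lip} \leq C n^{1-2/q}$, so $\|F\|_{\rm Lip}^2 \leq C^2 n^{2-4/q}$.

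Next I would feed this into Theorem~\ref{lipschitz}: since $F$ is Lipschitz on $S_n$ with median $M_{n,q}$, for every $r>0$
$$
p_n\big(\,|F - M_{n,q}| > r\,\big) \leq 2\, e^{-(N_n-1)\frac{r^2}{2\|F\|_{\rm Lip}^2}} \leq 2\, e^{-(n^2-1)\frac{r^2}{2C^2 n^{2-4/q}}} \leq 2\, e^{-c_1 n^{4/q} r^2}
$$
for a suitable $c_1>0$ independent of $n$ and $q$ (using $n^2-1 \geq \tfrac12 n^2$ for $n\geq 2$, and treating $n=1$ separately, where $N_n=1$ and the statement is trivial or absorbed into the constant). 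Now by Proposition~\ref{measures}, for each fixed $k_0$ the image measure of $\nu_n$ under $\Pi_{n,k_0}$ is $p_n$, so
$$
\nu_n\big(\,\{(b_k)_k \;|\; |\|b_{k_0}\|_{L^q} - M_{n,q}| > r\}\,\big) = p_n\big(\,|F - M_{n,q}| > r\,\big) \leq 2\, e^{-c_1 n^{4/q} r^2}.
$$
Finally, the event $\{\exists k_0 : |\|b_{k_0}\|_{L^q}-M_{n,q}|>r\}$ is the union over $k_0 \in \{1,\dots,N_n\}$ of these events, so a union bound gives the claimed factor $2N_n e^{-c_1 n^{4/q} r^2}$.

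I do not expect any serious obstacle here; the only points that need a little care are: (i) checking that the $L^2$-norm on $E_n$ transported through the identification with $\R^{N_n}$ really is the Euclidean norm (it is, since $(f_{n,k})_k$ is an orthonormal basis), so that the Lipschitz constant computed in $L^2$ matches the distance used in Theorem~\ref{lipschitz}; (ii) absorbing the small-$n$ cases and the constants from $N_n-1$ versus $N_n$ into $c_1$; and (iii) noting that $\|F\|_{\rm Lip}$, hence $c_1$, does not depend on $q$ because the $q$-dependence of the Lipschitz constant, namely $n^{1-2/q}$, is exactly cancelled by the $n^{4/q}$ that appears once we write $(n^2)/(n^{2-4/q}) = n^{4/q}$. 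With those observations the proof is a direct chaining of the two cited results and a union bound.
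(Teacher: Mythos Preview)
Your proposal is correct and follows essentially the same approach as the paper: compute the Lipschitz constant of $\|\cdot\|_{L^q}$ via the preceding Lemma, apply the concentration inequality of Theorem~\ref{lipschitz}, transfer to $\nu_n$ via Proposition~\ref{measures}, and take a union bound over $k_0$. The only difference is cosmetic ordering (the paper invokes Proposition~\ref{measures} before Theorem~\ref{lipschitz}), and your additional remarks on the identification of norms and the small-$n$ cases are welcome clarifications.
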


\begin{proof}First, we apply the previous Lemma to prove that $\|.\|_{L^q}$ is Lipschitz continuous with Lipschitz constant equal to $Cn^{1-2/q}$ as 
$$
\Big| \|u\|_{L^q} - \|v\|_{L^q}\Big| \leq \|u-v\|_{L^q} \leq Cn^{1-2/q} \|u-v\|_{L^2}\; .
$$

Then, we apply Proposition \ref{measures} to get, with $k_0$ fixed,
$$
\nu_n \left( \Big|\|b_{k_0}\| - M_{n,q}\Big| > r\right) = p_n \left( \Big|\|b\|_{L^q} - M_{n,q}\Big| > r\right)\; .
$$

Finally, we use Theorem \ref{lipschitz} to get
$$
\nu_n \left( \Big|\|b_{k_0}\| - M_{n,q}\Big| > r\right) \leq 2e^{-(N_n-1) \frac{r^2}{2C n^{2-4/q}}} \; .
$$

Since $\frac{N_n-1}{n^{2-4/q}} \geq Cn^{4/q}$ and by summing over $k_0$, we get the result, that is
$$
\nu_n \left( \lbrace (b_{k})_{1\leq k \leq N_n} \; | \; \exists k_0\; , \; \Big| \|b_{k_0}\|_{L^q} - M_{n,q} \Big| > r \rbrace \right)\leq 2N_n e^{-c_1 n^{4/q}r^2} \; .
$$
\end{proof}

Let us now estimate $M_{n,q}$.
%
%

\begin{lemma}For all $t\in \R_+$, we have
$$
p_n (|x_1|>t) \leq 2 e^{-(N_n-1)\frac{t^2}{2}}\; .
$$
\end{lemma}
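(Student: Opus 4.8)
The plan is to recognize that $x \mapsto x_1$ (the first coordinate of a point $x = (x_1,\dots,x_{N_n}) \in S_n = S^{N_n-1}$) is a Lipschitz function on $S_n$, and then invoke the concentration inequality of Theorem \ref{lipschitz} together with an estimate of the median of $x_1$. First I would note that the coordinate function $F(x) = x_1$ satisfies $|F(x)-F(y)| = |x_1 - y_1| \leq \|x-y\|_2$, so $\|F\|_{\mathrm{Lip}} \leq 1$. Hence Theorem \ref{lipschitz} applied to $F$ gives
$$
p_n(|x_1 - M(F)| > r) \leq 2 e^{-(N_n-1)\frac{r^2}{2}} \; ,
$$
where $M(F)$ is the median of $x_1$ under $p_n$.

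The next step is to observe that by symmetry of the uniform measure $p_n$ on the sphere under the reflection $x_1 \mapsto -x_1$, the distribution of $x_1$ is symmetric about $0$, so its median is $M(F) = 0$. Therefore the concentration bound becomes exactly
$$
p_n(|x_1| > t) \leq 2 e^{-(N_n-1)\frac{t^2}{2}} \; ,
$$
which is the claimed estimate. One should also dispatch the trivial range: for $t$ large the left-hand side is eventually $0$ (since $|x_1| \leq 1$ on $S_n$), and in all cases the right-hand side is a legitimate upper bound, so no case distinction is really needed beyond noting $M(F)=0$.

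The only mild subtlety — and the step I would be most careful about — is the identification of the median with $0$: it relies on the reflection symmetry of $p_n$, which is immediate since $p_n$ is the uniform (rotation-invariant) measure on $S_n$ and the map $x \mapsto (-x_1,x_2,\dots,x_{N_n})$ is an orthogonal transformation. Once that symmetry is invoked, both defining inequalities for the median, $p_n(x_1 \geq 0) \geq \tfrac12$ and $p_n(x_1 \leq 0) \geq \tfrac12$, hold simultaneously, so $0$ is a valid choice of median, and Theorem \ref{lipschitz} applies verbatim. No genuine obstacle arises here; the lemma is essentially a direct corollary of the concentration theorem already admitted.
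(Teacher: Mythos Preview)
Your proof is correct and follows exactly the same route as the paper: identify $x\mapsto x_1$ as a $1$-Lipschitz function on $S_n$, use the symmetry of $p_n$ to see that its median is $0$, and apply Theorem~\ref{lipschitz}. The paper states this more tersely but the argument is identical.
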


\begin{proof}
We use the fact that the projection on the first coefficient of one vector is Lipschitz continuous on the sphere $S_n$ with Lipschitz constant equal to $1$, and that as $p_n$ is uniformly distributed on $S_n$, the median of $x_1$ is $0$, hence, applying Theorem \ref{lipschitz} : 
$$
p_n( |x_1| > t) \leq 2 e^{-(N_n-1)\frac{t^2}{2}}\; .
$$
\end{proof}

%
%
%
%

\begin{proposition} There exists $C$ such that for all $n,q$,
$$
M_{n,q} \leq C \sqrt q \; .
$$
\end{proposition}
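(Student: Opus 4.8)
The plan is to bound $M_{n,q}$ by a moment computation rather than by concentration. The case $n=1$ is immediate: $E_1$ is the line of constant functions, so on $S_1$ the quantity $\|b\|_{L^q}$ equals the fixed number $\mathrm{vol}(S^3)^{1/q-1/2}$, which is bounded uniformly in $q$; hence assume $n\ge 2$. Since $M_{n,q}$ is a median of $\|\cdot\|_{L^q}$ on $(S_n,p_n)$ and $t\mapsto t^q$ is increasing, $M_{n,q}^q$ is a median of $\|\cdot\|_{L^q}^q$, and Markov's inequality combined with $p_n(\|b\|_{L^q}^q\ge M_{n,q}^q)\ge \tfrac12$ gives
$$
M_{n,q}^q \le 2\, E_{p_n}\!\bigl[\|b\|_{L^q}^q\bigr] = 2\int_{S^3} E_{p_n}\!\bigl[|b(x)|^q\bigr]\,dx
$$
by Fubini. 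It therefore suffices to control $E_{p_n}[|b(x)|^q]$ uniformly in $x\in S^3$.

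For a fixed $x$ I would write $b(x)=\langle b,v(x)\rangle$ with $v(x)=(f_{n,k}(x))_{1\le k\le N_n}\in\R^{N_n}$. The reproducing-kernel identity used in the proof of the previous Lemma gives $\|v(x)\|_2^2=K(x,x)=K_n(x)^2=Cn^2$, a constant independent of $x$ (here $C=1/\mathrm{vol}(S^3)$). Because $p_n$ is invariant under rotations of $S_n$, the random variable $\langle b,v(x)\rangle$ has, under $p_n$, the same law as $\|v(x)\|_2\,x_1$; hence $E_{p_n}[|b(x)|^q]=(Cn^2)^{q/2}\,E_{p_n}[|x_1|^q]$, and the previous display becomes
$$
M_{n,q}^q \le 2\,\mathrm{vol}(S^3)\,(Cn^2)^{q/2}\,E_{p_n}[|x_1|^q].
$$

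Next I would bound the coordinate moment with the subgaussian tail $p_n(|x_1|>t)\le 2e^{-(N_n-1)t^2/2}$ established just above:
$$
E_{p_n}[|x_1|^q]=\int_0^\infty q t^{q-1}p_n(|x_1|>t)\,dt \le 2q\int_0^\infty t^{q-1}e^{-(N_n-1)t^2/2}\,dt = q\,\Gamma(q/2)\Bigl(\tfrac{2}{N_n-1}\Bigr)^{q/2}.
$$
Since $N_n-1=n^2-1\ge \tfrac12 n^2$ for $n\ge 2$, the powers of $n$ cancel: $(Cn^2)^{q/2}\bigl(2/(N_n-1)\bigr)^{q/2}\le (4C)^{q/2}$, so $M_{n,q}^q\le 2\,\mathrm{vol}(S^3)\,q\,(4C)^{q/2}\Gamma(q/2)$. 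Taking $q$-th roots and using that $(2\,\mathrm{vol}(S^3))^{1/q}$ and $q^{1/q}$ are bounded by absolute constants for $q\ge 2$, together with a Stirling-type bound of the form $\Gamma(q/2)^{1/q}\le C\sqrt q$, one obtains $M_{n,q}\le C\sqrt q$ with $C$ independent of $n$ and $q$.

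The argument is essentially routine; the only mild care needed is to keep every constant uniform in $n$ and $q$ at once — specifically the cancellation via $N_n-1\ge\tfrac12 n^2$ and the Stirling estimate for $\Gamma(q/2)^{1/q}$ — and to record that the reduction of the law of $\langle b,v(x)\rangle$ to that of $\|v(x)\|_2\,x_1$ is exactly the rotation invariance of $p_n$, the same device already used in proving the tail bound for $x_1$. I do not expect a genuine obstacle here.
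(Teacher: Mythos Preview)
Your proof is correct and follows essentially the same route as the paper: bound the median via Markov's inequality by the $q$-th moment $E_{p_n}[\|b\|_{L^q}^q]$, apply Fubini, reduce $b(x)$ to $K_n(x)\cdot x_1$ by rotation invariance, and then use the subgaussian tail of $x_1$ together with $K_n(x)=Cn$ to cancel the powers of $n$. Your explicit treatment of the case $n=1$ (where $N_n-1=0$ and the concentration bound is vacuous) is in fact a small improvement over the paper's write-up.
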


\begin{proof}
Let us compute the mean value with respect to $p_n$ of $\|.\|_{L^q}^q$. Let
$$
A_{n,q}^q = E(\|.\|_{L^q}^q) = \int_{S_n} \Big(\int_{S^3} |u(x)|^qdx\Big)dp_n(u)\; .
$$
We can reverse the order of the integrals :
$$
A_{n,q}^q = \int_{S^3}\Big( \int_{S_n} |u(x)|^pdp_n(u) \Big) dx = \int_{S^3} \int_{R_+} q\lambda^{q-1}p_n(|u(x)|> \lambda) d\lambda dx\; .
$$
With our particular basis $(f_{n,k})_k$, we get that $u$ is written
$$
u = \sum_k a_k f_{n,k}
$$
hence with $K_n(x) = \sqrt{\sum_k |f_{n,k}(x)|^2}$ and $\epsilon (x)$ the unit vector : 
$$
\epsilon_k(x) = \frac{f_{n,k}(x)}{K_n(x)}\; ,
$$
we get
$$
p_n(|u(x)|> \lambda) = p_n \Big(|\langle a, \epsilon(x)\rangle | > \frac{\lambda}{K_n(x)}\Big)
$$
and since $p_n$ is invariant by the action of $O(N_n)$, $\epsilon(x)$ can be replaced by $(1,0,\hdots, 0)$ :
$$
p_n(|u(x)|> \lambda) = p_n(|a_1| > \frac{\lambda}{K_n(x)}) \leq 2e^{-(N_n-1) \frac{\lambda^2}{2K_n(x)^2}} \; .
$$
We already proved that $K_n(x) = C n \leq C \sqrt{N_n-1}$, hence 
$$
A_{n,q}^q \leq \int_{S^3}\left( \int_{\R_+}q\lambda^{q-1} 2e^{-\frac{\lambda^2}{2}}d\lambda\right) dx
$$
As we have by induction on $q$
$$
\int_{\R^+} q \lambda^{q-1} e^{-\lambda^2/ 2} \leq C q^{q/2}\; ,
$$
we get that
$$
A_{n,q} \leq C \sqrt q
$$
with $C$ independent from $n$ and $q$.

To bound $M_{n,q}$, we use the definition of the median : 
$$
\frac{1}{2} \leq p_n (\|u\|_{L^q} \geq M_{n,q})  = p_n (\|u\|_{L^q}^q \geq M_{n,q}^q) 
$$
and then using Markov's inequality,
$$
\frac{1}{2} \leq M_{n,q}^{-q}A_{n,q}^q\; .
$$
We deduce from that that
$$
M_{n,q} \leq 2^{1/q}A_{n,q} \leq C \sqrt q\; ,
$$
which concludes the proof.
\end{proof}

We know prove the existence of a sequence $p_m$ that goes to $\infty$ such that there exists an orthonormal basis 
$
(e_{n,k})_{n,k}
$
such that $e_{n,k}$ belongs to $E_n$ and 
$$
\|e_{n,k}\|_{L^{p_m}} \leq C \sqrt{p_m}
$$
where $C$ is independent from $n,k$ and $m$.

We have that for some constant $C$ independent from $n$ and $p$, the set
$$
B_{n,p} = \lbrace (e_{n,k})_{1\leq k\leq (n+1)^2} \in U_n \; | \; \forall k \|e_{n,k}\|_{L^p} \leq C \sqrt p \rbrace
$$
satisfies that
$$
\nu_n (B_{n,p}^c) \leq c_0 n^2 e^{-c'_1 C^2 n^{4/p} p}
$$
where $B_{n,p}^c$ is the complementary set of $B_{n,p}$ by taking $\Lambda = C \sqrt p$ in the Proposition \ref{prop-lip}. By taking the product measure $\nu$ of the $\nu_n$ and with
$$
B_p = \prod_n B_{n,p}
$$
we get
$$
\nu(B_p^c ) \leq c_0 \sum_{n\geq 1} n^2e^{-c'_1C^2 n^{4/p}p} \leq c_0 e^{-c_1'C^2 p} \sum_{n\geq 1}n^{2-4c_1'C^2}\; .
$$
Hence, for $C$ large enough, we have for all $p$:
$$
\nu(B_p^c) \leq \frac{1}{2}\; .
$$
By Fatou lemma, 
$$
\nu (\limsup_{p\rightarrow \infty } B_{p} ) \geq \frac{1}{2}
$$
Therefore, the set $\limsup_{p\rightarrow \infty } B_{p}$ is not empty. This is equivalent to the existence of a sequence $p_m \rightarrow \infty$ and a basis $e_{n,k}$ of spherical harmonics such that
\begin{equation}
\|e_{n,k}\|_{L^{p_m}} \leq C\sqrt{p_m} \; .
\end{equation}

\bibliographystyle{amsplain}
\bibliography{NPSbib} 
\nocite{*}

\end{document}